\newtheorem{theorem}{Theorem}[section]
\newtheorem{prop}[theorem]{Proposition}
\newtheorem{lemma}[theorem]{Lemma}
\newtheorem{cor}[theorem]{Corollary}
\newtheorem{definition}[theorem]{Definition}
\newtheorem{example}[theorem]{Example}
\theoremstyle{definition}
\newtheorem{remark}[theorem]{Remark}
\newcounter{tenumerate}
\renewcommand{\epsilon}{\varepsilon}
\newcommand{\remove}[1]{}
\renewcommand{\leq}{\leqslant}
\renewcommand{\geq}{\geqslant}
\def\XXint#1#2#3{{\setbox0=\hbox{$#1{#2#3}{\int}$}
\vcenter{\hbox{$#2#3$}}\kern-.5\wd0}}
\title{\textbf{Representation theorems for dynamic convex risk measures}}
\begin{document}
\author{{Shiqiu Zheng\thanks{E-mail: shiqiu@tust.edu.cn}}\\
  \\
\small College of Sciences, Tianjin University of Science and Technology, Tianjin 300457, China}
\date{}
\maketitle
\begin{abstract}
In this paper, we prove that under the domination condition:
\begin{equation*}
{\cal{E}}^{-\mu,-\nu}[-\xi|{\cal{F}}_t]\leq\rho_t(\xi)\leq{\cal{E}}^{\mu,\nu}[-\xi|{\cal{F}}_t],\quad \forall\xi\in \mathcal{L}^{\exp}_T\ (\text{resp.}\ L^2(\mathcal{F}_T)),\ \forall t\in[0,T],
\end{equation*}
where ${\cal{E}}^{\mu,\nu}$ is the $g$-expectation with generator $\mu|z|+\nu|z|^2, \mu\geq0, \nu\geq0$, the dynamic convex (resp. coherent) risk measure $\rho$ admits a representation as a $g$-expectation, whose generator $g$ is convex (resp. sublinear) in the variable $z$ and has a quadratic (resp. linear) growth. As an application, we show that such dynamic convex (resp. coherent) risk measure $\rho$ admits a dual representation, where the penalty term (resp. the set of probability measures) is characterized by the corresponding generator $g$.\\\\
\textbf{Keywords:} Convex risk measures; Coherent risk measures; Backward stochastic differential equation; $g$-expectation; Dual representation \\
\textbf{AMS Subject Classification:} 60H10
\end{abstract}
\section{Introduction}
An important task in risk management is to quantify the risks of financial positions. Artzner et al. \cite{AD} introduced the notion of coherent risk measures, which are defined by a set of four desirable properties for measures of risk. This seminal framework was later extended to convex risk measures by F\"{o}llmer and Schied \cite{FS} and Frittelli and Rosazza Gianin \cite{FG}. However, coherent and convex risk measures are static, meaning that they are not updated as new information becomes available. This motivates the development of dynamic risk measures (DRMs), including dynamic coherent risk measures (coherent DRMs) (see \cite{AD2, Ri}) and dynamic convex risk measures (convex DRMs) (see \cite{FG04, DS, KS, JR, BE, Bi, FS10}), etc. Moreover, several other axiomatic notions in finance are closely related to DRMs. For instance, filtration-consistent nonlinear expectations ($\mathcal{F}$-expectations) introduced by Coquet et al. \cite{CH} have a one-to-one correspondence with a class of DRMs (which we call regular DRMs in the present paper). The dynamic concave utilities introduced by Delbaen et al. \cite{DPR} have a one-to-one correspondence with convex DRMs.

A question raised in \cite{CH} can be generally described in the language of DRMs as follows: for a DRM $\rho$, can we find a function $g$ and a domain ${D}$ such that for all $\xi\in D$,
\begin{equation*}\label{1.1}
 \rho_t(\xi)={\cal{E}}^g[-\xi|{\cal{F}}_t],\quad \forall t\in[0,T]?\tag{1.1}
\end{equation*}
Here, ${\cal{E}}^g[\cdot]$ is the $g$-expectation defined by the solution of the backward stochastic differential equation (BSDE) with generator $g$. This representation problem was considered theoretically very interesting and practically important by Peng in his ICM lecture (see \cite[Page 403]{Peng10}). A positive answer means that any DRM satisfying certain conditions can be computed by solving a corresponding BSDE. This problem has been investigated for regular DRMs (i.e., for $\mathcal{F}$-expectations) in several settings. In \cite{CH}, the authors obtained the first result, which shows that (\ref{1.1}) holds for a Lipschitz continuous function $g$ on the domain $L^2(\mathcal{F}_T)$ under the following domination condition:
\begin{equation*}\label{1.2}
\rho_t(\xi)-\rho_t(\eta)\leq{\cal{E}}^{\mu,0}[\eta-\xi|{\cal{F}}_t],\quad \forall\xi, \eta\in L^2(\mathcal{F}_T),\ \forall t\in[0,T],\tag{1.2}\footnote{From \cite[Lemma 4.3 and Lemma 4.4]{CH} and the fact that ${\cal{E}}^{-\mu,0}[\cdot|{\cal{F}}_t]=-{\cal{E}}^{\mu,0}[-\cdot|{\cal{F}}_t]$, it follows that the assumptions in \cite{CH} (the domination condition: $\rho_0(\xi)-\rho_0(\eta)\leq{\cal{E}}^{\mu,0}[\eta-\xi]$, the translation invariance and the strict monotonicity) imply (\ref{1.2}). In fact, the proof of the representation theorem in \cite{CH} relies essentially on (\ref{1.2}).}
\end{equation*}
where ${\cal{E}}^{\mu,0}$ is the $g$-expectation defined by the BSDE with generator $\mu|z|, \mu\geq0$. This result was later extended to general filtrations under (\ref{1.2}) by Royer \cite{Ro} and Cohen \cite{Co}. To represent regular DRMs more broadly via $g$-expectations, it is natural to extend (\ref{1.2}) to more general cases, especially the quadratic growth case (for example, entropy risk measure is a $g$-expectation whose generator has a quadratic growth (quadratic $g$-expectation)). Zheng and Li \cite{Zheng18} extend the representation theorem in \cite{CH} to the uniformly continuous case by replacing the generator $\mu|z|$ in (\ref{1.2}) with an increasing and linear growth function $\phi(x)$. For the representations of regular DRMs via quadratic $g$-expectations, Hu et al. \cite{HM} first showed that (\ref{1.1}) holds on $L^\infty(\mathcal{F}_T)$ under several domination conditions (see Definition 3.8(1)-(3) and (H4) in \cite{HM}). Later, Zheng \cite{Zheng24} proved that (\ref{1.1}) holds on $L^\infty(\mathcal{F}_T)$ under a locally Lipschitz domination condition together with a convergence assumption (see (A$_{\rho(k)}$) and (A$_{L^\infty}$) in \cite{Zheng24}). Both of these results require that the terminal variables of $\rho$ are bounded and that $\rho$ has independent increments. The latter condition implies that the generator of the $g$-expectation in (\ref{1.1}) is deterministic. However, since BSDEs with superquadratic growth and bounded terminal variables are generally ill-posed (see \cite{DH}), it is generally impossible to represent DRMs on $L^\infty({\cal{F}}_T)$ via superquadratic $g$-expectations. It is noteworthy that all the representation results mentioned above are obtained using the domination argument introduced by \cite{CH}. We refer to \cite{Zheng24} for an alternative method for Problem (\ref{1.1}) in the multi-dimensional case. However, this method requires that $\rho$ has independent increments and satisfies certain convergence properties.

Convex and coherent DRMs are two fundamental classes of risk measures. The main objective of this paper is to study the representation problem (\ref{1.1}) for convex (resp. coherent) DRMs under the following domination condition:
\begin{equation*}\label{1.3}
{\cal{E}}^{-\mu,-\nu}[-\xi|{\cal{F}}_t]\leq\rho_t(\xi)\leq{\cal{E}}^{\mu,\nu}[-\xi|{\cal{F}}_t],\quad \forall\xi\in \mathcal{L}^{\exp}_T\ (\text{resp.}\ L^2(\mathcal{F}_T)),\ \forall t\in[0,T],\tag{1.3}
\end{equation*}
where ${\cal{E}}^{\mu,\nu}$ is the $g$-expectation defined by the BSDE with generator $\mu|z|+\nu|z|^2, \mu\geq0, \nu\geq0$. Since $\rho_t(0)=0$ and ${\cal{E}}^{-\mu,0}[\cdot|{\cal{F}}_t]=-{\cal{E}}^{\mu,0}[-\cdot|{\cal{F}}_t]$, it follows that (\ref{1.2}) implies
${\cal{E}}^{-\mu,0}[-\xi|{\cal{F}}_t]\leq\rho_t(\xi)\leq{\cal{E}}^{\mu,0}[-\xi|{\cal{F}}_t]$. Thus, (\ref{1.2}) is a special case of (\ref{1.3}). For convex (resp. coherent) DRMs, we prove that under (\ref{1.3}), (\ref{1.1}) holds on $\mathcal{L}^{\exp}_T$ (resp. $L^2(\mathcal{F}_T)$) for a generator $g$ which is convex (resp. sublinear) in the variable $z$ and has a quadratic (resp. linear) growth (see Theorem \ref{rmt} and Corollary \ref{rmt2}). Compared to the works of \cite{HM} and \cite{Zheng24}, our result holds for unbounded terminal variables and does not require $\rho$ to have independent increments or satisfy additional convergence properties, and (\ref{1.3}) seems to be easier to verify for convex DRMs in applications. Our proof follows the domination idea developed by \cite{CH} in the Lipschitz case. However, since some results that are crucial in the proof of the representation theorem in \cite{CH} generally fail to hold in non-Lipschitz settings, applying the domination argument in such cases, particularly the quadratic growth case, is really not easy. New domination conditions need to be found, and new strategies and techniques have to be developed, as seen in \cite{HM, Zheng18, Zheng24}. Inspired by the well-posedness results for quadratic BSDEs in Briand and Hu \cite{BH08} and Fan et al. \cite{FHT20}, we use the convexity property to obtain a $\theta$-domination for convex DRMs. This is then used to derive a crucial convergence property of convex DRMs. Combining this convergence, an exponential transformation and localization techniques, we establish a Doob-Meyer decomposition for a $\rho$-supermartingale with the special structure: $Y_t+z\cdot B_t$, where $Y_t$ is bounded. This decomposition, together with the aforementioned convergence result, enables us to complete the proof.

A dual representation of a convex DRM $\rho$ has the form:
\begin{equation*}\label{1.4}
\rho_t(\xi)=\operatorname{ess\,sup}_{Q\in\mathcal{Q}}\left(E_{Q}[-\xi|\mathcal{F}_t]+\alpha(Q)\right),\quad \forall t\in[0,T],\tag{1.4}
\end{equation*}
where $\xi\in L^0(\mathcal{F}_T)$, $\mathcal{Q}$ is a set of probability measures and $\alpha$ is
a penalty function. A dual representation of a coherent DRM $\rho$ has the form:
\begin{equation*}\label{1.5}
\rho_t(\xi)=\operatorname{ess\,sup}_{Q\in\mathcal{Q}_1}E_{Q}[-\xi|\mathcal{F}_t],\quad \forall t\in[0,T],\tag{1.5}
\end{equation*}
where $\xi\in L^0(\mathcal{F}_T)$ and $\mathcal{Q}_1$ is a set of probability measures. For further details on the dual representation of convex DRMs, we refer to Detlefsen and Scandolo \cite{DS}, Bino-Nadal \cite{Bi}, F\"{o}llmer and Schied \cite{FS10}, and the references therein. \cite{DPR} provided a representation for the penalty function $\alpha$ in (\ref{1.4}). They proved that under some additional assumptions, there exists a suitable nonnegative function $f$ such that for all $\xi\in L^\infty(\mathcal{F}_T)$,
\begin{equation*}\label{1.6}
\rho_t(\xi)=\operatorname{ess\,sup}_{Q^q\in\mathcal{Q}}E_{Q^q}
\left[-\xi-\int_t^Tf(s,q_s)ds|\mathcal{F}_t\right],\quad \forall t\in[0,T],\tag{1.6}
\end{equation*}
where $\mathcal{Q}:=\{Q^q: Q^q$ is a probability equivalent to $P$ such that $dQ/dP=\exp\{\int_0^tq_sds-\frac{1}{2}\int_0^t|q_s|^2ds\}\}.$ Using the representation theorem established in this paper (see (\ref{3.2}) in Theorem \ref{rmt}) and some results of Fan et al. \cite{FHT24}, we show that any convex DRM satisfying (\ref{1.3}) admits a dual representation as in (\ref{1.6}). Compared to \cite{DPR}, our result holds for unbounded terminal variables, and the function $f$ can be determined by the Legendre-Fenchel transformation of the generator $g$ in (\ref{3.2}). Furthermore, for coherent DRMs satisfying (\ref{1.3}), we obtain a dual representation as in (\ref{1.5}), where the terminal variable $\xi$ is unbounded and $\mathcal{Q}_1$ is characterized by the subdifferential $\partial g(t,0)$. Our method relies on the representation (\ref{3.2}) and therefore requires (\ref{1.3}).

The paper is organized as follows. In Section 2, we show some results on quadratic $g$-expectations. In Section 3, we introduce the main results of this paper. In Section 4, we prove some properties of convex DRMs. In Section 5, we establish a nonlinear Doob-Meyer decomposition. In Section 6, we provide the proofs of the main results. The Appendix contains an auxiliary proof.
\section{Quadratic $g$-expectations}
Let $(\Omega ,\cal{F},\mathit{P})$ be a complete probability space. Let ${{(B_t)}_{t\geq
0}}$ be a $d$-dimensional standard Brownian motion defined on $(\Omega ,\cal{F},\mathit{P})$. Let $({\cal{F}}_t)_{t\geq 0}$ be
the natural filtration generated by ${{(B_t)}_{t\geq 0}}$ and augmented
by the $\mathit{P}$-null sets of ${\cal{F}}$. Let ${\cal{P}}$ denote the $({\cal{F}}_t)_{t\geq 0}$-progressive sigma-field on $[0,T]\times\Omega$. Let $|z|$ denote the
Euclidean norm of $z\in {\textbf{R}}^d$. For $x,z\in {\textbf{R}}^d$, let $x\cdot z$ denote its scalar product. Let $T>0$ be given real number and ${\cal{T}}_{t,T}$ be the set of all stopping times $\tau$ such that $t\leq \tau\leq T$. Let $\mu\geq0$ and $\nu\geq0$ be given constants. Note that in this paper, all the equalities and inequalities for random variables are understood in the almost sure sense. We define the following spaces:

$L^0({\cal{F}}_t):=\{\xi:\ {\cal{F}}_t$-measurable random variable$\}$, $t\in[0,T]$;

$L^r({\mathcal {F}}_t):=\{\xi\in L^0({\cal{F}}_t):$ ${{E}}\left[|\xi|^r\right]<\infty\},\ r\geq1,\ t\in[0,T]$;

$L^\infty({\mathcal {F}}_t):=\{\xi\in L^0({\cal{F}}_t):$ $\|\xi\|_\infty:=\operatorname{ess\,sup}_{\omega\in\Omega}|\xi(\omega)|<\infty\},\ t\in[0,T]$;

${\mathcal{C}}:=\{(\psi_t)_{t\in[0,T]}:$ continuous and $({\cal{F}}_t)$-adapted process$\}$;

${\mathcal{S}}^r:=\{(\psi_t)_{t\in[0,T]}\in{\mathcal{C}}:$ $E[{\mathrm{sup}}_{0\leq t\leq T}|\psi_t|^r]<\infty \},\ r\geq1;$

${\mathcal{S}}^\infty:=\{(\psi_t)_{t\in[0,T]}\in{\mathcal{C}}:$ $\|\psi_t\|_{\mathcal{S}^\infty}:=\operatorname{ess\,sup}_{(\omega,t)\in\Omega\times[0,T]}|\psi_t(\omega)|<\infty\}$;

$\mathcal{L}^{\exp}_t:=\{\xi: \exp(|\xi|)\in \bigcap_{r>1}L^r({\mathcal {F}}_t)\},\ t\in[0,T]$;

$\mathcal{L}^{\exp}_{\mathcal{F}}:=\{(\psi_t)_{t\in[0,T]}: \exp(|\psi_t|)\in\bigcap_{r>1}\mathcal{S}^r\}$.

${\mathcal{A}}:=\{(\psi_t)_{t\in[0,T]}:$ increasing, RCLL and $({\cal{F}}_t)$-adapted
$\textbf{R}$-valued process with $\psi_0=0\}$;

${H}_d^r:=\{(\psi_t)_{t\in[0,T]}:$  $\textbf{R}^d$-valued, $({\cal{F}}_t)$-progressively measurable and $\int_0^T|\psi_t|^rdt
<\infty \},\ r\geq1;$

${\cal{H}}_d^r:=\{(\psi_t)_{t\in[0,T]}\in{H}^2:$ $E[(\int_0^T|\psi_t|^2dt)^{\frac{r}{2}}]<\infty\},\ r\geq1;$

${\cal{H}}_d^{BMO}:=\{(\psi_t)_{t\in[0,T]}\in{\mathcal{H}}_d^2:$ $\|\psi_t\|_{BMO}:=\sup_{\tau\in{\cal{T}}_{0,T}}\|E[\int_\tau^T|\psi_t|^2dt|{\cal{F}}_\tau]\|_{\infty}^{1/2}<\infty\};$

${\cal{G}}:=\{g(\omega,t,z): \Omega\times [0,T]\times\mathbf{R}^{\mathit{d}}\longmapsto\mathbf{R}$ and is measurable with respect to ${\cal{P}}\otimes{\cal{B}}(\mathbf{R}^d)\};$

${\cal{G}}^{\mu,\nu}:=\{g\in{\cal{G}}:$ $dt\times dP$-$a.e.,$ $\forall z\in\mathbf{R}^{\mathit{d}},\ |g(t,z)|\leq\mu|z|+\nu|z|^2\};$

${\cal{G}}^{\mu,\nu}_{\theta}:=\{g\in{\cal{G}}:$ $dt\times dP$-$a.e.,$ $\forall z,\tilde{z}\in\mathbf{R}^d$, $\forall\theta\in(0,1)$, $g(t,z)-\theta g(t,\tilde{z})\leq\mu|z-\theta\tilde{z}|
+\frac{\nu}{1-\theta}|z-\theta\tilde{z}|^2\}$;

${\cal{G}}_{conv}:=\{g\in{\cal{G}}:$ $dt\times dP$-$a.e.,$ $g(t,z)$ is convex in $z\};$

${\cal{G}}_{subl}:=\{g\in{\cal{G}}:$ $dt\times dP$-$a.e.,$ $\forall z\in\mathbf{R}^d, \forall\beta\geq0$, $g(t,\beta z)=\beta g(t,z)$, and $\forall z_1, z_2\in\mathbf{R}^d, g(t,z_1+z_2)\leq g(t,z_1)+g(t,z_2)\}.$

\begin{remark}\label{r2.1}
For $g\in{\cal{G}}^{\mu,\nu}\cap{\cal{G}}^{\mu,\nu}_{\theta}$, we have $dt\times dP$-$a.e.,$ for all $z, \tilde{z}\in\mathbf{R}^d$ and $\theta\in(0,1)$,
\begin{align*}
&g(t,z)-g(t,\tilde{z})\\&=g(t,z)-\theta g(t,\tilde{z})-(1-\theta)g(t,\tilde{z})\\
  &\leq\mu|z-\theta\tilde{z}|
+\frac{\nu}{1-\theta}|z-\theta\tilde{z}|^2+(1-\theta)(\mu|\tilde{z}|+\nu|\tilde{z}|^2)\\
&\leq\mu(|z-\theta\tilde{z}|+|\tilde{z}-\theta z|)
+\frac{\nu}{1-\theta}(|z-\theta\tilde{z}|^2+|\tilde{z}-\theta z|^2)+(1-\theta)(\mu(|\tilde{z}|+|z|)+\nu(|\tilde{z}|^2+|z|^2)).
\end{align*}
By interchanging $z$ and $\tilde{z}$, we have
\begin{equation*}
\lim_{\tilde{z}\rightarrow z}|g(t,z)-g(t,\tilde{z})|\leq4(1-\theta)(\mu|z|+\nu|z|^2)\rightarrow0,\quad \text{as}\ \theta\rightarrow1.
\end{equation*}
It follows that for all $g\in{\cal{G}}^{\mu,\nu}\cap{\cal{G}}^{\mu,\nu}_{\theta}$, $dt\times dP$-$a.e.,$ $g(t,z)$ is continuous in $z$.
Furthermore, from \cite[Proposition 2.3(i)]{FHT20}, it follows that ${\cal{G}}^{\mu,\nu}\cap{\cal{G}}_{conv}\subset{\cal{G}}^{\mu,\nu}\cap{\cal{G}}^{\mu,\nu}_{\theta}.$
\end{remark}

We consider the following BSDE$(g,\xi,T)$:
\begin{equation*}
Y_t=\xi+\int_t^Tg\left(s,Z_s\right)
ds-\int_t^TZ_s\cdot dB_s,\ \ t\in[0,T].
\end{equation*}
From \cite[Theorem 2.6(i)]{FHT20} and Remark \ref{r2.1}, it is known that for $\xi\in\mathcal{L}^{\exp}_T$ and $g\in{\cal{G}}^{\mu,\nu}\cap{\cal{G}}^{\mu,\nu}_{\theta}$, the BSDE$(g,\xi,T)$ has a unique solution $(Y_t,Z_t)\in\mathcal{L}^{\exp}_{\mathcal{F}}\times\bigcap_{r>1}\mathcal{H}_d^r$. Moreover, by Pardoux and Peng \cite{PP} and \cite[Remark 2.1]{FHT24}, for $\xi\in L^2(\mathcal{F}_T)$ and $g\in{\cal{G}}^{\mu,0}\cap{\cal{G}}_{conv}$, the BSDE$(g,\xi,T)$ has a unique solution $(Y_t,Z_t)\in\mathcal{S}^2\times\mathcal{H}_d^2$. We introduce the notion of $g$-expectations, which is initiated in Peng \cite{Peng97} in the Lipschitz case.

\begin{definition}\label{qg} (i) For $\xi\in \mathcal{L}^{\exp}_T$ and $g\in{\cal{G}}^{\mu,\nu}\cap{\cal{G}}^{\mu,\nu}_{\theta}$, if $(Y_t,Z_t)$ is a solution to the BSDE$(g,\xi,T)$ in $\mathcal{L}^{\exp}_\mathcal{F}\times\bigcap_{r>1}\mathcal{H}_d^r$, then the conditional $g$-expectation of $\xi$ is defined by
${\cal{E}}^g[\xi|{\cal{F}}_t]:=Y_t,\ t\in[0,T]$ and the $g$-expectation of $\xi$ is defined by ${\cal{E}}^g[\xi]:=Y_0.$

(ii) For $\xi\in L^2(\mathcal{F}_T)$ and $g\in{\cal{G}}^{\mu,0}\cap{\cal{G}}_{conv}$, if $(Y_t,Z_t)$ is a solution to the BSDE$(g,\xi,T)$ in $\mathcal{S}^2\times\mathcal{H}_d^2$, then the conditional $g$-expectation of $\xi$ is defined by
${\cal{E}}^g[\xi|{\cal{F}}_t]:=Y_t,\ t\in[0,T]$ and the $g$-expectation of $\xi$ is defined by ${\cal{E}}^g[\xi]:=Y_0.$
\end{definition}

Since ${\cal{G}}^{\mu,\nu}\cap{\cal{G}}_{conv}\subset{\cal{G}}^{\mu,\nu}\cap{\cal{G}}^{\mu,\nu}_{\theta}$ (see Remark \ref{r2.1}), it is clear that the two cases of Definition \ref{qg} are consistent. In the following, we list some properties of quadratic $g$-expectations. For more results on $g$-expectations, we refer to \cite{Peng97, Peng99, Peng04, Ros, Ro, Jiang, MY, Jia, Xu, Zheng24} and the references therein.

\begin{lemma}\label{g} Let $\xi\in\mathcal{L}^{\exp}_T$ and $g,f\in{\cal{G}}^{\mu,\nu}\cap{\cal{G}}^{\mu,\nu}_{\theta}$. Then the following hold:

(i) For all $t\in[0,T]$ and $s\in[0,t]$, ${\cal{E}}^g[{\cal{E}}^g[\xi|{\cal{F}}_t]|{\cal{F}}_s]={\cal{E}}^g[\xi|{\cal{F}}_s]$;

(ii) For all $t\in[0,T]$ and $\zeta\in\mathcal{L}^{\exp}_t$, ${\cal{E}}^g[\xi+\zeta|{\cal{F}}_t]={\cal{E}}^g[\xi|{\cal{F}}_t]+\zeta$;

(iii) For all $t\in[0,T]$ and $\zeta\in\mathcal{L}^{\exp}_t$, ${\cal{E}}^g[\zeta|{\cal{F}}_t]=\zeta$;

(iv) For $t\in[0,T]$, set $\tau_t:=\{s\geq0:|B_{t+s}-B_t|>1\}\wedge (T-t)$. Then, for all $z\in\mathbf{R}^d$,
\begin{equation*}
  \lim_{\epsilon\rightarrow0^+}{\cal{E}}^g[z\cdot (B_{t+\epsilon\wedge\tau_t}-B_t)|{\cal{F}}_t]=g(t,z),\ \ dt\text{-}a.e.;
\end{equation*}

(v) For all $\beta>1$, there exists a constant $C>0$ depending only on $\beta$, $\mu$, $\nu$ and $T$, such that
\begin{equation*}
E\left[\exp\left(2(\mu+\nu)\beta\sup_{t\in[0,T]}|{\cal{E}}^g[\xi|{\cal{F}}_t]|\right)\right]\leq CE\left[\exp\left(2(\mu+\nu)\beta|\xi|\right)\right].
\end{equation*}

(vi) For all $\eta\in\mathcal{L}^{\exp}_T$, if $\xi\leq \eta$, then for all $t\in[0,T]$, ${\cal{E}}^g[\xi|{\cal{F}}_t]\leq{\cal{E}}^g[\eta|{\cal{F}}_t]$;

(vii) $g(t,\cdot)\leq f(t,\cdot)$, $dt\times dP$-a.e., if and only if for all $\eta\in\mathcal{L}^{\exp}_T$,
\begin{equation*}
{\cal{E}}^g[\eta|{\cal{F}}_t]\leq{\cal{E}}^f[\eta|{\cal{F}}_t],\quad \forall t\in[0,T];
\end{equation*}

(viii) $g\in\mathcal{G}_{conv}$ if and only if for all $\eta, \zeta\in\mathcal{L}^{\exp}_T$ and $\theta\in(0,1)$,
\begin{equation*}
{\cal{E}}^g[\theta\eta+(1-\theta)\zeta|{\cal{F}}_t]\leq\theta{\cal{E}}^g[\eta|{\cal{F}}_t]
+(1-\theta){\cal{E}}^g[\zeta|{\cal{F}}_t],\quad \forall t\in[0,T];
\end{equation*}

(ix) $g\in\mathcal{G}_{subl}$ if and only if for all $\eta, \zeta\in\mathcal{L}^{\exp}_T$ and $\beta\geq0$,
\begin{equation*}
{\cal{E}}^g[\beta\eta|{\cal{F}}_t]=\beta{\cal{E}}^g[\eta|{\cal{F}}_t]\ \ \text{and}\ \  {\cal{E}}^g[\eta+\zeta|{\cal{F}}_t]\leq{\cal{E}}^g[\eta|{\cal{F}}_t]
+{\cal{E}}^g[\zeta|{\cal{F}}_t],\quad \forall t\in[0,T].
\end{equation*}
\end{lemma}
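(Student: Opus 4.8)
The nine assertions fall into three groups. Items (i)--(iii) are algebraic consequences of the well-posedness of BSDE$(g,\xi,T)$ in $\mathcal{L}^{\exp}_{\mathcal F}\times\bigcap_{r>1}\mathcal{H}_d^r$ (\cite[Theorem 2.6(i)]{FHT20}) together with the restriction property of BSDE solutions. For (ii), since $g\in{\cal G}^{\mu,\nu}$ forces $g(s,0)=0$, the pair $(\zeta,0)$ solves the equation on $[t,T]$ with terminal value $\zeta\in\mathcal{L}^{\exp}_t$, so uniqueness on $[t,T]$ gives ${\cal E}^g[\zeta|{\cal F}_t]=\zeta$. For (iii), if $(Y,Z)$ solves BSDE$(g,\xi,T)$ then on $[t,T]$ the pair $(Y+\zeta,Z)$ solves it with terminal value $\xi+\zeta\in\mathcal{L}^{\exp}_T$ (Hölder, using $\exp|\xi+\zeta|\le\exp|\xi|\exp|\zeta|$), whence ${\cal E}^g[\xi+\zeta|{\cal F}_t]=Y_t+\zeta$. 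For (i), restrict the solution of BSDE$(g,Y_t,T)$ to $[0,t]$, identify its value at time $t$ with $Y_t$ via (ii), and conclude by uniqueness on $[0,t]$. The exponential a priori bound (v) and the comparison principle (vi) for generators in ${\cal G}^{\mu,\nu}\cap{\cal G}_\theta$ over the exponential domain are exactly the estimates underlying \cite{FHT20}, which I would cite; alternatively (v) can be reproved by applying It\^o's formula to a smooth regularization of $r\mapsto\exp(\lambda|Y_r|)$, absorbing the linear part $\mu|z|$ of the generator by a Girsanov change of measure, controlling the quadratic part $\nu|z|^2$ by the $\exp$-integrability of the terminal value, and invoking Doob's maximal inequality, and (vi) by the linearization built into the definition of ${\cal G}_\theta$.

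The technical core is (iv). I would first reduce ${\cal E}^g[z(B_{t+\epsilon\wedge\tau_t}-B_t)|{\cal F}_t]$ to the value at $t$ of the solution $(Y,Z)$ of BSDE$(g,z(B_{t+\epsilon\wedge\tau_t}-B_t),T)$ restricted to the stopped interval $[t,t+\epsilon\wedge\tau_t]$ (on the complementary interval the solution is the constant $z(B_{t+\epsilon\wedge\tau_t}-B_t)$, which is already ${\cal F}_{t+\epsilon\wedge\tau_t}$-measurable). On the stopped interval the terminal datum is bounded by $|z|$, so $Y$ is bounded and $Z\in\mathcal{H}_d^{BMO}$; taking ${\cal F}_t$-conditional expectation and killing the martingale term by optional stopping gives $Y_t=\E\bigl[\int_t^{t+\epsilon\wedge\tau_t}g(s,Z_s)\,ds\,\big|\,{\cal F}_t\bigr]$. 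Comparing with the equation whose generator is frozen at $z$ (whose value at $t$ is $\E[\int_t^{t+\epsilon\wedge\tau_t}g(s,z)\,ds\,|\,{\cal F}_t]$, the Brownian part vanishing likewise) and estimating $\E\int_t^{t+\epsilon\wedge\tau_t}|g(s,Z_s)-g(s,z)|\,ds=o(\epsilon)$ by the quadratic a priori bounds, together with $\P(\tau_t<\epsilon\,|\,{\cal F}_t)\to0$ and the Lebesgue differentiation theorem, yields $\epsilon^{-1}{\cal E}^g[z(B_{t+\epsilon\wedge\tau_t}-B_t)|{\cal F}_t]\to g(t,z)$, $dt$-a.e., which is the content of (iv).

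Given (iv) and (vi), each of (vii)--(ix) splits into a ``forward'' and a ``converse'' half. The forward halves are comparison arguments on supersolutions: if $g\le f$, (vi) gives directly ${\cal E}^g[\eta|{\cal F}_t]\le{\cal E}^f[\eta|{\cal F}_t]$; if $g\in{\cal G}_{conv}$, the process $U_r:=\theta{\cal E}^g[\eta|{\cal F}_r]+(1-\theta){\cal E}^g[\zeta|{\cal F}_r]$, paired with $\tilde Z_r:=\theta Z^\eta_r+(1-\theta)Z^\zeta_r$ (the $Z$-components for $\eta$ and $\zeta$), solves the BSDE with terminal $\theta\eta+(1-\theta)\zeta$ and driver $g(r,\tilde Z_r)+\delta_r$, $\delta_r\ge0$ by convexity, so comparison gives $U_r\ge{\cal E}^g[\theta\eta+(1-\theta)\zeta|{\cal F}_r]$; if $g\in{\cal G}_{subl}$ the same scheme handles subadditivity, and positive homogeneity follows from the exact scaling $(Y,Z)\mapsto(\beta Y,\beta Z)$ and uniqueness. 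The converse halves all invoke (iv): apply the functional inequality to terminal variables of the form $z(B_{t+\epsilon\wedge\tau_t}-B_t)$ (for (viii) and (ix), $z_i(B_{t+\epsilon\wedge\tau_t}-B_t)$), divide by $\epsilon$, let $\epsilon\to0$, and read off the corresponding pointwise inequality for $g(t,\cdot)$; a density argument over rational arguments together with continuity of $z\mapsto g(t,z)$ (valid for generators in ${\cal G}^{\mu,\nu}\cap{\cal G}_\theta$, \cite{FHT20}) then upgrades it to the stated a.e.\ assertion.

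I expect the main obstacle to be (iv): obtaining control of $(Y,Z)$ on the stopped interval that is uniform enough in $\epsilon$ to pass the limit through the quadratic nonlinearity, and, relatedly, verifying in the forward directions of (viii)--(ix) that the perturbed driver $g(r,\tilde Z_r)+\delta_r$ still falls within the scope of the comparison theorem for quadratic BSDEs. The latter is precisely why the working hypothesis is ${\cal G}_\theta$ rather than bare convexity, as recorded in the inclusion ${\cal G}^{\mu,\nu}\cap{\cal G}_{conv}\subset{\cal G}^{\mu,\nu}\cap{\cal G}_\theta$ noted just before the lemma.
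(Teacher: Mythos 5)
Your proposal follows essentially the same route as the paper, which disposes of (i)--(iii) by uniqueness of the BSDE solution together with $g(t,0)\equiv0$, cites Zheng--Li for (iv), Briand--Hu for (v), and the comparison theorem of Zheng (2024) for (vi) and for the two halves of (vii)--(ix); you are simply unfolding the standard arguments behind those citations. One small point worth recording: the limit in (iv) that your argument produces (and that the converse halves of (vii)--(ix) actually require) carries the normalization $\epsilon^{-1}$ in front of ${\cal{E}}^g[z(B_{t+\epsilon\wedge\tau_t}-B_t)|{\cal{F}}_t]$, consistent with the cited representation theorem, whereas the display in the lemma as printed omits this factor.
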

\begin{proof}
By the uniqueness of solutions, we obtain (i) and (ii). Since for all $t\in[0,T]$, ${\cal{E}}^g[0|{\cal{F}}_t]=0$, by (ii), we obtain (iii). (iv) is from Zheng \cite[Corollary 3.2]{Zheng15}. Since $|g(t,z)|\leq(\mu+\nu)(1+|z|^2)$, (v) can be derived from \cite[Corollary 4]{BH08}. By the uniqueness of solutions and the comparison theorem in \cite[Theorem 2.7(i)]{Zheng24}, we get (vi). In view of (iv) and the comparison theorem in \cite[Theorem 2.7(ii)]{Zheng24}, by similar arguments as in \cite[Proposition 3.5]{Zheng24}, we can get (vii)-(ix).
\end{proof}

\begin{definition} Let $g\in{\cal{G}}^{\mu,\nu}\cap{\cal{G}}^{\mu,\nu}_{\theta}$. A process $(Y_t)_{t\in[0,T]}$ with $Y_t\in \mathcal{L}^{\exp}_t$ for $t\in [0,T],$ is called a $g$-martingale (resp. $g$-supermartingale, $g$-submartingale), if for any $0\leq s\leq t\leq T,$ we have ${\cal{E}}^g[Y_t|{\cal{F}}_s]=Y_s$, (resp. $\leq,\ \geq$).
\end{definition}

For convenience, we denote $g^{\mu,\nu}(z):=\mu|z|+\nu|z|^2$, $g^{-\mu,-\nu}(z):=-\mu|z|-\nu|z|^2$, $\mathcal{E}^{\mu,\nu}:=\mathcal{E}^{g^{\mu,\nu}}$ and $\mathcal{E}^{-\mu,-\nu}:=\mathcal{E}^{g^{-\mu,-\nu}}$. The path regularity of bounded quadratic $g$-submartingales was obtained by Ma and Yao \cite{MY}. The following is a path regularity result of unbounded $g^{\mu,\nu}$-submartingales.

\begin{lemma}\label{rcll}
Let $Y_t$ be a $g^{\mu,\nu}$-submartingale (resp. $g^{-\mu,-\nu}$-supermartingale). Then $Y_t$ admits an RCLL modification.
\end{lemma}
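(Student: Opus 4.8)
The plan is to reduce the statement, via an exponential change of variable, to the classical Lipschitz case, and then to follow the usual Doob-type regularization argument. First, since $(-Y,-Z)$ solves $\mathrm{BSDE}(g^{-\mu,-\nu},\xi,T)$ whenever $(Y,Z)$ solves $\mathrm{BSDE}(g^{\mu,\nu},-\xi,T)$, one has $\mathcal{E}^{-\mu,-\nu}[\,\cdot\,|\mathcal{F}_t]=-\mathcal{E}^{\mu,\nu}[-\,\cdot\,|\mathcal{F}_t]$, so $Y$ is a $g^{-\mu,-\nu}$-supermartingale if and only if $-Y$ is a $g^{\mu,\nu}$-submartingale; since negation preserves RCLL modifications, it suffices to treat a $g^{\mu,\nu}$-submartingale $Y$. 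If $\nu=0$ this is already a submartingale for the Lipschitz generator $g^{\mu,0}(t,z)=\mu|z|$. If $\nu>0$, set $P_t:=e^{2\nu Y_t}$; I claim $P$ is a $g^{\mu,0}$-submartingale. Fix $s\le t$ and let $\bar Y_r:=\mathcal{E}^{\mu,\nu}[Y_t|\mathcal{F}_r]$ for $r\in[0,t]$, with associated $\bar Z$, so that $\bar Y_s\ge Y_s$ and $d\bar Y_r=-(\mu|\bar Z_r|+\nu|\bar Z_r|^2)\,dr+\bar Z_r\,dB_r$; Itô's formula then gives, with $\tilde Z_r:=2\nu e^{2\nu\bar Y_r}\bar Z_r$,
\begin{equation*}
e^{2\nu\bar Y_r}=e^{2\nu Y_t}+\int_r^t\mu\,|\tilde Z_u|\,du-\int_r^t\tilde Z_u\,dB_u,\qquad r\in[0,t].
\end{equation*}
Here $e^{2\nu Y_t}\in\bigcap_{q>1}L^q(\mathcal{F}_t)\subset L^2(\mathcal{F}_t)$, and $\tilde Z\in\mathcal{H}_d^2$ by a Hölder estimate using $\bar Y\in\mathcal{L}^{\exp}_{\mathcal{F}}$ and $\bar Z\in\bigcap_{q>1}\mathcal{H}_d^q$, so $(e^{2\nu\bar Y_{\cdot}},\tilde Z)$ is the solution of the Lipschitz $\mathrm{BSDE}(g^{\mu,0},e^{2\nu Y_t},t)$ and $e^{2\nu\bar Y_s}=\mathcal{E}^{\mu,0}[e^{2\nu Y_t}|\mathcal{F}_s]$. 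Since $x\mapsto e^{2\nu x}$ is increasing, $P_s\le e^{2\nu\bar Y_s}=\mathcal{E}^{\mu,0}[P_t|\mathcal{F}_s]$, which is the $g^{\mu,0}$-submartingale property; and since $x\mapsto\tfrac{1}{2\nu}\log x$ is continuous, an RCLL modification of $P$ that is everywhere strictly positive yields one of $Y$.

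For the Lipschitz $g^{\mu,0}$-submartingale $P$ (or directly for $Y$ when $\nu=0$) I would argue as in Doob's regularization theorem. Fix a countable dense $D\subset[0,T]$ with $T\in D$. From $P_s\le\mathcal{E}^{\mu,0}[P_T|\mathcal{F}_s]$ and the $L^2$-a priori estimate for the Lipschitz $\mathrm{BSDE}(g^{\mu,0},P_T,T)$ one obtains $E[\sup_{s\in D}P_s^2]<\infty$; combined with the downcrossing inequality for Lipschitz $g$-submartingales this shows that, off a null set, $(P_s)_{s\in D}$ makes only finitely many up- and down-crossings of each rational interval, so $\bar P_t:=\lim_{D\ni s\downarrow t}P_s$ ($t<T$), $\bar P_T:=P_T$, has RCLL paths with existing left limits. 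That $\bar P$ is a modification of $P$ follows as in Doob's theorem: for $D\ni s\downarrow t$ one has $P_t\le\mathcal{E}^{\mu,0}[P_s|\mathcal{F}_t]\to\mathcal{E}^{\mu,0}[\bar P_t|\mathcal{F}_t]=\bar P_t$ by the $L^2$-continuity of the Lipschitz $g$-expectation (using the $L^2$-domination above and $\mathcal{F}_{t+}=\mathcal{F}_t$), so $P_t\le\bar P_t$, while the reverse inequality follows by extracting $s_n\downarrow t$ with $P_{s_n}\to P_t$ a.s.\ from the right-continuity in probability of $P$. Finally $\inf_{s\le T}\bar P_s=\inf_{s\in D}\bar P_s=\inf_{s\in D}P_s>0$ a.s.\ (granting $\inf_{s\in D}Y_s>-\infty$ a.s.), so $\bar Y_t:=\tfrac{1}{2\nu}\log\bar P_t$ is the required RCLL modification of $Y$.

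The crucial difficulty, and the point that is new relative to Ma and Yao \cite{MY} (who treat \emph{bounded} quadratic $g$-submartingales), is the absence of boundedness: in the bounded case the relevant BSDEs have bounded solutions and BMO martingale parts, so the change of measure and the downcrossing estimates are routine, whereas here all the quantitative control must be extracted from the exponential integrability $Y_t\in\mathcal{L}^{\exp}_t$ and from Lemma~\ref{g}(v). Concretely, the steps needing care are: (a) verifying that the exponential transform genuinely lands in the $L^2$-theory of Lipschitz BSDEs, i.e.\ the square-integrability of $\tilde Z$ and of $\sup_{s\in D}P_s$, which is where the $\mathcal{L}^{\exp}$-integrability enters; (b) the finiteness of the lateral limits, i.e.\ $\inf_{s\in D}Y_s>-\infty$ a.s., which I would obtain from a maximal inequality for the $g^{-\mu,-\nu}$-supermartingale $-Y$ (the upper bound $\sup_{s\in D}Y_s<\infty$ a.s.\ being immediate from $Y_s\le\mathcal{E}^{\mu,\nu}[Y_T|\mathcal{F}_s]$ and Lemma~\ref{g}(v)); and (c) the Lipschitz regularization itself — the downcrossing inequality with its constants, and the right-continuity in probability inherent in the setting that makes $\bar P$ a genuine modification rather than merely an RCLL dominating $g$-submartingale. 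A variant avoiding the exponential transform, namely localizing $Y$ by the stopping times $\tau_n:=\inf\{t:\mathcal{E}^{\mu,0}[P_T|\mathcal{F}_t]>n\}\wedge T$ to reduce to the bounded result of \cite{MY}, meets the same points (a)--(c) and in addition requires an optional sampling theorem for $g^{\mu,0}$-submartingales.
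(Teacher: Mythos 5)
Your proposal is correct in its core reduction and its first two steps coincide exactly with the paper's: the negation argument disposing of the supermartingale case, and the exponential change of variable showing (via It\^o's formula applied to the BSDE for $\bar Y_r=\mathcal{E}^{\mu,\nu}[Y_t|\mathcal{F}_r]$) that $e^{2\nu\mathcal{E}^{\mu,\nu}[Y_t|\mathcal{F}_s]}=\mathcal{E}^{\mu,0}[e^{2\nu Y_t}|\mathcal{F}_s]$, hence that $P=e^{2\nu Y}$ is a $g^{\mu,0}$-submartingale. Where you genuinely diverge is the final step: you propose to redo Doob's regularization at the level of the nonlinear operator $\mathcal{E}^{\mu,0}$ (downcrossing inequalities for Lipschitz $g$-submartingales, $L^2$ maximal bounds, right limits along a countable dense set, optional sampling for the localization variant), whereas the paper makes one further reduction that you stop short of: by \cite[Corollary 3.6]{Zheng25} the Lipschitz generator $\mu|z|$ is linearized by a Girsanov kernel $b$ with $|b_s|\le\mu$, giving $\mathcal{E}^{\mu,\nu}[Y_t|\mathcal{F}_s]=\frac{1}{2\nu}\ln E_Q[e^{2\nu Y_t}|\mathcal{F}_s]$ for an equivalent probability $Q$, so that $e^{2\nu Y}$ is a \emph{classical} submartingale under $Q$ and the classical regularization theorem applies verbatim. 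That extra linearization is precisely what makes your items (a)--(c) unnecessary: the $L^2$ control comes for free from $e^{2\nu Y_T}\in\bigcap_{r>1}L^r(\mathcal{F}_T)$, and no nonlinear downcrossing or optional sampling theory is needed. Your route can be closed (the downcrossing inequality for Lipschitz $g$-submartingales is available in Peng's work), but it is substantially heavier for no gain. Two of the delicate points you flag are, however, not artifacts of your route and are equally present, though unaddressed, in the paper's appeal to ``classical martingale theory'': the modification property of the regularized process needs right-continuity of $t\mapsto E_Q[e^{2\nu Y_t}]$ (equivalently, right-continuity of $Y$ in probability), and recovering a real-valued RCLL version of $Y=\frac{1}{2\nu}\log P$ needs strict positivity of the regularized exponential, i.e.\ your point (b); so these observations are well taken but do not distinguish the two arguments.
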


\begin{proof} Let $Y_t$ be a $g^{\mu,\nu}$-submartingale. By Zheng et al. \cite[Corollary 3.6]{Zheng25}, we have
\begin{equation*}
Y_s\leq \mathcal{E}^{\mu,\nu}[Y_t|{\cal{F}}_s]=\frac{1}{2\nu}\ln(E_Q[\exp(2\nu Y_t)|{\cal{F}}_s]),\ \ \forall 0\leq s\leq t\leq T.
\end{equation*}
Here, $Q$ is a probability measure such that $dQ/dP=\exp\{\int_0^Tb_s\cdot dB_s-\frac{1}{2}\int_0^T|b_s|^2ds\}$, where $b_t$ is a progressively measurable process such that $|b_t|\leq\mu.$ This implies that for any $0\leq s\leq t\leq T$, $\exp(2\nu Y_s)\leq E_Q[\exp(2\nu Y_t)|{\cal{F}}_s]$. Thus, $\exp(2\nu Y_t)$ is a classical submartingale under $Q$. Then by the classical martingale theory, we get that $Y_t$ admits an RCLL modification.

If $Y_t$ is a $g^{-\mu,-\nu}$-supermartingale, then
$-Y_s\leq -{\cal{E}}^{-\mu,-\nu}[Y_t|{\cal{F}}_s]={\cal{E}}^{\mu,\nu}[-Y_t|{\cal{F}}_s]$ for any $0\leq s\leq t\leq T$.
This implies that $-Y_t$ is a $g^{\mu,\nu}$-submartingale. Thus $Y_t$ admits an RCLL modification.
\end{proof}

\begin{remark}\label{rrcll}
In view of Lemma {\ref{rcll}}, for any $g^{\mu,\nu}$-submartingale (resp. $g^{-\mu,-\nu}$-supermartingale), we will always take its RCLL version.
\end{remark}

At the end of this section, we give a Doob-Meyer decomposition of $g^{\mu,\nu}$-submartingales.
\begin{lemma}\label{dmg} Let $Y_t$ be a $g^{\mu,\nu}$-submartingale such that $\sup_{t\in[0,T]}\exp(|Y_t|)\in\bigcap_{r>1}L^r(\mathcal{F}_T)$. Then there exists a pair $(Z_t,A_t)\in H_d^2\times{\cal{A}}$, such that
\begin{equation*}\label{2.1}
Y_t=Y_T+\int_t^T(\mu|Z_s|+\nu|Z_s|^2)
ds-A_T+A_t-\int_t^TZ_s\cdot dB_s,\ \  t\in[0,T].\tag{2.1}
\end{equation*}
\end{lemma}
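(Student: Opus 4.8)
The plan is to follow the classical Peng-type approach to nonlinear Doob--Meyer decompositions, adapted to the quadratic-growth setting by means of an exponential change of variables. First I would reduce the statement to a setting where the classical (linear) Doob--Meyer theorem applies. As in the proof of Lemma \ref{rcll}, for any $0\le s\le t\le T$ we have $Y_s\le{\cal{E}}^{\mu,\nu}[Y_t|{\cal{F}}_s]=\frac{1}{2\nu}\ln\big(E_Q[\exp(2\nu Y_t)|{\cal{F}}_s]\big)$ for a suitable equivalent measure $Q$; hence $U_t:=\exp(2\nu Y_t)$ is a genuine $Q$-submartingale, and by hypothesis $\sup_t U_t\in\bigcap_{r>1}L^r$, so $U$ is of class (D) under $Q$. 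The classical Doob--Meyer decomposition then yields $U_t=U_0+M_t-K_t$ with $M$ a $Q$-martingale and $K\in{\cal{A}}$; a martingale representation (Girsanov back to $P$) gives $M$ in terms of a stochastic integral. Applying It\^o's formula to $Y_t=\frac{1}{2\nu}\ln U_t$ and collecting the finite-variation terms produces a candidate decomposition of the form \eqref{2.1} with some $Z\in H_d^2$ and an increasing process; the drift $\mu|Z_s|+\nu|Z_s|^2$ should appear precisely because $g^{\mu,\nu}$ is the upper generator, and one checks the sign of the remaining finite-variation part is nonincreasing, i.e. lies in ${\cal{A}}$, using the submartingale property.

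An alternative (and perhaps cleaner) route, which I would pursue in parallel, is to approximate: for each $n$ solve the BSDE on $[0,T]$ with terminal condition $Y_T$ and a penalized generator, or equivalently consider the $g^{\mu,\nu}$-expectations ${\cal{E}}^{\mu,\nu}[Y_{t}\mid{\cal F}_s]$ over a refining sequence of partitions and form the associated discrete "added mass" processes $A^n$; the submartingale property forces these to be increasing, and one shows the pair $(Z^n,A^n)$ is bounded in $H_d^2\times{\cal A}^2$ using the \emph{a priori} estimates for quadratic BSDEs together with the exponential integrability of $\sup_t|Y_t|$ (Lemma \ref{g}(v) and \cite{FHT20}). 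Passing to a weak limit along a subsequence (Komlós / weak-$L^2$ compactness for $Z^n$ and for $dA^n$) and identifying the limit via the stability of quadratic BSDEs then gives \eqref{2.1}. Both approaches need the same analytic input, so I would present whichever makes the integrability bookkeeping lightest; the exponential-transform argument has the advantage of outsourcing the hard compactness to the classical theorem.

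The main obstacle is the integrability of the finite-variation part $A$ and of $Z$: in the quadratic regime one does not have the clean $L^2$ a priori bounds available in the Lipschitz case, so one must control $\int_0^T|Z_s|^2\,ds$ and $A_T$ purely through the exponential moment hypothesis on $\sup_t|Y_t|$. Concretely, after the exponential change of variables the quadratic term in the generator is absorbed into the It\^o correction, and the BMO-type estimates for $Z$ (via the energy inequality applied to $U=e^{2\nu Y}$, using $\sup_t U_t\in\bigcap_{r>1}L^r$) should give $Z\in H_d^2$ and, by difference, $A\in{\cal A}$; verifying that the decomposition is the \emph{right} one --- that the drift is exactly $\mu|Z_s|+\nu|Z_s|^2$ and $A$ is genuinely increasing rather than merely of finite variation --- requires the comparison theorem (Lemma \ref{g}(vi),(vii)) applied between $Y$ and the solution of BSDE$(g^{\mu,\nu},Y_{t},t)$ on each small interval, exactly as in Peng's original argument. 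I expect the technical heart of the proof to be this localization-and-patching step, handled by stopping times $\tau_k$ that keep $|Y|$ and $\int|Z|^2$ bounded, followed by a monotone passage to the limit.
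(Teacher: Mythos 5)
Your overall architecture (exponential transform $U_t=\exp(2\nu Y_t)$, a Doob--Meyer decomposition for $U$, then It\^o applied to $\frac{1}{2\nu}\ln U_t$) is exactly the paper's, but your first route has a genuine gap at the decomposition step: there is no single fixed measure $Q$ under which $U_t$ is a classical submartingale. The identity $\mathcal{E}^{\mu,\nu}[Y_t|\mathcal{F}_s]=\frac{1}{2\nu}\ln\bigl(E_Q[\exp(2\nu Y_t)|\mathcal{F}_s]\bigr)$ holds for a Girsanov kernel $b_s$ built from the $Z$-part of the BSDE with terminal value $Y_t$, so $Q$ depends on the pair $(s,t)$ (and on $Y_t$); one only gets $U_s\le \mathcal{E}^{\mu,0}[U_t|\mathcal{F}_s]=\operatorname{ess\,sup}_{|b|\le\mu}E_{Q^b}[U_t|\mathcal{F}_s]$, which is strictly weaker than the submartingale property under any fixed $Q$ (and the inequality $\mathcal{E}^{\mu,0}[\cdot|\mathcal{F}_s]\ge E[\cdot|\mathcal{F}_s]$ goes the wrong way for working under $P$). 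The paper sidesteps this entirely: it shows via It\^o's formula that $\exp(2\nu\mathcal{E}^{\mu,\nu}[Y_t|\mathcal{F}_s])=\mathcal{E}^{\mu,0}[\exp(2\nu Y_t)|\mathcal{F}_s]$, concludes that $U_t$ is a \emph{nonlinear} $g^{\mu,0}$-submartingale for the Lipschitz generator $\mu|z|$, and invokes Peng's nonlinear Doob--Meyer theorem \cite[Theorem 3.3]{Peng99}, which directly delivers $(z_t,a_t)\in H_d^2\times\mathcal{A}$ with drift $\mu|z_s|$. This also disposes of your concern about identifying the drift: the $\mu|Z_s|$ term is built into Peng's decomposition and the $\nu|Z_s|^2$ term falls out mechanically from the It\^o correction of the logarithm, with no comparison-theorem argument or localization-and-patching needed.

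Two further points. First, Peng's theorem produces an increasing process $a_t$ that is only RCLL, so the inverse transform must split $a$ into its continuous part $a^c$ and its jumps, and one must check that the jump contribution to $Y$ is nonnegative (it is, since $\Delta U_t=\Delta a_t\ge 0$ and $\ln$ is increasing), so that $A_t:=\int_0^t\frac{1}{2\nu U_s}\,da^c_s+\sum_{s\le t}\Delta Y_s$ lies in $\mathcal{A}$; your sketch waves at ``checking the sign of the finite-variation part'' but does not do this bookkeeping, and note the sign should be \emph{increasing} (it enters (\ref{2.1}) as $-A_T+A_t$). Second, your alternative penalization route amounts to re-proving Peng's theorem from scratch and is not carried out; the integrability worries you raise about $Z$ and $A_T$ are exactly what citing \cite[Theorem 3.3]{Peng99} (whose hypotheses are met because $\sup_t U_t\in\bigcap_{r>1}L^r$) is designed to absorb.
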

\begin{proof} For $t\in[0,T]$, applying It\^{o}'s formula to $\exp(2\nu \mathcal{E}^{\mu,\nu}[Y_t|{\cal{F}}_s])$ for $s\in[0,t]$, we have
\begin{equation*}\label{2.2}
\exp(2\nu\mathcal{E}^{\mu,\nu}[Y_t|{\cal{F}}_s])=\mathcal{E}^{\mu,0}[\exp(2\nu Y_t)|{\cal{F}}_s],\quad\forall s\in[0,t].\tag{2.2}
\end{equation*}
Since for all $s\in[0,t]$, $\exp(2\nu Y_s)\leq\exp(2\nu\mathcal{E}^{\mu,\nu}[Y_t|{\cal{F}}_s])$, by (\ref{2.2}), $\exp(2\nu Y_t)$ is a $g^{\mu,0}$-submartingale. Then, by Peng \cite[Theorem 3.3]{Peng99}, there exists a pair $(z_t, a_t)\in H_d^2\times{\cal{A}}$, such that
\begin{equation*}
\exp(2\nu Y_t)=\exp(2\nu Y_T)+\int_t^T\mu|z_s|
ds-a_T+a_t-\int_t^Tz_s\cdot dB_s,\quad t\in[0,T].\\
\end{equation*}
Let $a^c_t$ be the continuous part of $a_t$. Set $y_t:=\exp(2\nu Y_t),\ \Delta Y_t:=Y_t-Y_{t-},\ \Delta y_t:=y_t-y_{t-},\ \Delta a_t:=a_t-a_{t-},\ t\in(0,T]$. Applying It\^{o}'s formula to $\frac{1}{2\nu}\ln(y_t)$, we have
\begin{align*}
Y_t=Y_T+&\int_{t+}^T\frac{1}{2\nu y_{s-}}\left(\mu|z_s|ds-da_s-z_s\cdot dB_s\right)
+\int_{t+}^T\frac{|z_s|^2}{4\nu |y_s|^2}ds-\sum_{s\in(t,T]}\left(\Delta Y_s-\frac{1}{2\nu y_{s-}}\Delta y_s\right)\\
=Y_T+&\int_t^T\left(\mu\left|\frac{z_s}{2\nu y_s}\right|+\nu\left|\frac{z_s}{2\nu y_s}\right|^2\right)
ds-\int_t^T\frac{z_s}{2\nu y_s}\cdot dB_s\\
&-\int_t^T\frac{1}{2\nu y_s}da^c_s-\sum_{s\in(t,T]}\left(\frac{1}{2\nu y_{s-}}\Delta a_s
+\Delta Y_s-\frac{1}{2\nu y_{s-}}\Delta y_s\right),\quad t\in[0,T].
\end{align*}
Since $\Delta y_t=\Delta a_t\geq0$, by setting $Z_t:=\frac{z_s}{2\nu y_s}$ and $A_t:=\int_0^t\frac{1}{2\nu y_s}da^c_s+\sum_{s\in(0,t]}\Delta Y_s$, we get that $(Z_t,A_t)\in H_d^2\times{\cal{A}}$ and (\ref{2.1}) holds. The proof is complete.
\end{proof}
\section{Representations of dynamic convex risk measures}
We first introduce some notions of dynamic risk measures (DRMs).
\begin{definition}\label{defrm}
A system of operators:
\begin{equation*}
\rho_t(\cdot):\ L^0(\mathcal{F}_T)\longrightarrow L^0(\mathcal{F}_t), \quad t\in[0,T],
\end{equation*}
is called a DRM on $L^0(\mathcal{F}_T)$, if for all $\xi, \eta\in L^0(\mathcal{F}_T)$, it satisfies (\hyperref[r1]{r1}), (\hyperref[r2]{r2}) and (\hyperref[r3]{r3}):

(r1)\label{r1} Monotonicity: If $\xi\geq\eta$, then for all $t\in[0,T]$, $\rho_t(\xi)\leq\rho_t(\eta)$;

(r2)\label{r2} Time-consistency: For all $t\in[0,T]$ and $s\in[0,t]$, $\rho_s(-\rho_t(\xi))=\rho_s(\xi)$;

(r3)\label{r3} Constant preservation: If for some $t\in[0,T]$, $\xi\in L^0(\mathcal{F}_t),$ then $\rho_t(\xi)=-\xi$.\\
A DRM is called a \textbf{convex} DRM, if for all $\xi, \eta\in L^0(\mathcal{F}_T)$, it satisfies (\hyperref[r4]{r4}):

(r4)\label{r4} Convexity: For all $t\in[0,T]$ and $\theta\in(0,1)$, $\rho_t(\theta\xi+(1-\theta)\eta)\leq\theta\rho_t(\xi)+(1-\theta)\rho_t(\eta).$\\
A DRM is called a \textbf{coherent} DRM, if for all $\xi, \eta\in L^0(\mathcal{F}_T)$, it satisfies (\hyperref[r5]{r5}) and (\hyperref[r6]{r6}):

(r5)\label{r5} Subadditivity: For all $t\in[0,T]$, $\rho_t(\xi+\eta)\leq\rho_t(\xi)+\rho_t(\eta)$;

(r6)\label{r6} Positive homogeneity: For all $\lambda\geq0$ and $t\in[0,T]$, $\rho_t(\lambda\xi)=\lambda\rho_t(\xi)$.\\
A DRM is called a \textbf{regular} DRM, if for all $\xi\in L^0(\mathcal{F}_T)$, it satisfies (\hyperref[r7]{r7}):

(r7)\label{r7} Regular: For all $t\in[0,T]$ and $A\in{\mathcal {F}}_t,$
$\rho_t(1_A\xi)=1_A\rho_t(\xi)$.
\end{definition}

It can be seen that a coherent DRM is a convex DRM. From Lemma \ref{g}(viii), it is clear that if $g\in{\cal{G}}^{\mu,\nu}\cap{\cal{G}}_{conv}$, then ${\cal{E}}^g[-\cdot|{\cal{F}}_t]$ is a convex DRM on $\mathcal{L}^{\exp}_T$; From Rosazza Gianin \cite{Ro}, it follows that if $g\in{\cal{G}}^{\mu,0}\cap{\cal{G}}_{conv}$, then ${\cal{E}}^g[-\cdot|{\cal{F}}_t]$ is a convex DRM on $L^2(\mathcal{F}_T)$.

\begin{remark}\label{rrm} We present the correspondences between DRMs and several important notions in financial mathematics:

(i) If $\rho$ is a coherent DRM, then $\rho_0$ is a coherent risk measure (see \cite{AD});

(ii) If $\rho$ is a convex DRM, then $\rho_0$ is a convex risk measure (see \cite{FS, FG});

(iii) $\rho$ is a regular DRM if and only if $\rho(-\cdot)$ is an $\mathcal{F}$-expectation (see \cite{CH, Peng04});

(iv) $\rho$ is a convex DRM if and only if $-\rho$ is a dynamic concave utility (see \cite{DPR}).
\end{remark}

To solve Problem (\ref{1.1}) for convex DRMs, we make the following assumption:
\begin{itemize}
  \item \textbf{(A)}\label{A}\quad For all $\xi\in\mathcal{L}^{\exp}_T$,
\begin{equation*}\label{3.1}
  {\cal{E}}^{-\mu,-\nu}[-\xi|{\cal{F}}_t]\leq\rho_t(\xi)\leq{\cal{E}}^{\mu,\nu}[-\xi|{\cal{F}}_t],\quad \forall t\in[0,T].
  \tag{3.1}
\end{equation*}
\end{itemize}

We have the following representation theorems:

\begin{theorem}\label{rmt} Let $\rho$ be a convex DRM satisfying (\hyperref[A]{A}). Then there exists a unique $g\in{\cal{G}}^{\mu,\nu}\cap{\cal{G}}_{conv}$ such that for all $\xi\in \mathcal{L}^{\exp}_T$,
\begin{equation*}\label{3.2}
 \rho_t(\xi)={\cal{E}}^g[-\xi|{\cal{F}}_t],\quad \forall t\in[0,T].\tag{3.2}
\end{equation*}
Moreover, the function $f(t,x):=\sup_{z\in\mathbf{R}^d}\{x\cdot z-g(t,z)\},\ (t,x)\in[0,T]\times\mathbf{R}^d,$
is such that for all $\xi\in\mathcal{L}^{\exp}_T$,
\begin{equation*}\label{3.3}
\rho_t(\xi)=\operatorname{ess\,sup}_{q\in\mathcal{Q}_{\xi,f}}E_{Q^q}\left[-\xi-\int_t^Tf(s,q_s)ds|\mathcal{F}_t\right],\quad \forall t\in[0,T],\tag{3.3}
\end{equation*}
where $\mathcal{Q}_{\xi,f}:=\{q_t\in H_d^2: \theta^q_t:=\exp\{\int_0^tq_s\cdot dB_s-\frac{1}{2}\int_0^t|q_s|^2ds\}$ is a uniformly integrable martingale such that $E_{Q^q}[|\xi|+\int_0^T|f(s,q_s)|ds]<\infty$ with $dQ^q=\theta^q_TdP\}.$ In particular, the supremum in (\ref{3.3}) is attained by any $q_t\in H_d^2$ such that $dt\times dP$-$a.e.,$ $q_t\in \partial g(t,Z_t^{-\xi,g})$.\footnote{Here, $Z^{-\xi,g}_t$ is the second component of the solution to the BSDE$(g,-\xi,T)$ in $\mathcal{L}^{\exp}_\mathcal{F}\times\bigcap_{r>1}\mathcal{H}_d^r$.}
\end{theorem}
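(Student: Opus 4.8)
Uniqueness is immediate from Lemma~\ref{g}(vii): if $g,\tilde g\in{\cal G}^{\mu,\nu}\cap{\cal G}_{\theta}$ both satisfy (\ref{3.2}), then $\mathcal E^{g}[\eta|\mathcal F_t]=\mathcal E^{\tilde g}[\eta|\mathcal F_t]$ for every $\eta\in\mathcal L^{\exp}_T$, hence $g=\tilde g$, $dt\times dP$-a.e., and convexity of $g$ will come out of the construction below. For existence, the first step is to turn convexity into quantitative control. Writing, for $\theta\in(0,1)$, $\xi_1=\theta\xi_2+(1-\theta)\tfrac{\xi_1-\theta\xi_2}{1-\theta}$ and combining (r4) with the upper bound in (A) yields the $\theta$-domination
\[
\rho_t(\xi_1)-\theta\rho_t(\xi_2)\le\mathcal E^{h_\theta}\!\big[\theta\xi_2-\xi_1\,\big|\,\mathcal F_t\big],\qquad h_\theta(s,z):=\mu|z|+\tfrac{\nu}{1-\theta}|z|^2,
\]
which is the DRM-level counterpart of membership in ${\cal G}_{\theta}$. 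Inserting into this the logarithmic/Girsanov form of quadratic $g$-expectations used in the proof of Lemma~\ref{dmg}, taking $\xi_1=\xi_n$, $\xi_2=\xi$, and letting $n\to\infty$ and then $\theta\to1$, I would obtain $\rho_t(\xi_n)\to\rho_t(\xi)$ whenever $\xi_n\to\xi$ in the $\mathcal L^{\exp}$-sense (this is Proposition~\ref{rmp2}(i); the case $\nu=0$ is handled separately with Lipschitz estimates). Since each $g^{\pm\mu,\pm\nu}$-expectation is local and $\rho$ is squeezed between them, (A), (r4) and this continuity also give the regularity property $\rho_t(1_A\xi)=1_A\rho_t(\xi)$ for $A\in\mathcal F_t$.

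Next I would construct $g$. Following Lemma~\ref{g}(iv), for $z\in\mathbf R^d$ and $\tau_t$ as there, set $g(t,z):=\lim_{\epsilon\to0^+}\tfrac1\epsilon\,\rho_t\!\big({-}z(B_{t+\epsilon\wedge\tau_t}-B_t)\big)$. Existence of the limit $dt$-a.e.\ follows from the $\theta$-domination, which sandwiches the increments of $\epsilon\mapsto\rho_t(-z(B_{t+\epsilon\wedge\tau_t}-B_t))$ between those of the explicitly computable $g^{\pm\mu,\pm\nu}$-expectations, squeezing the upper and lower limits by convexity. The map $z\mapsto g(t,z)$ is convex (since $z\mapsto z(B_{t+\epsilon}-B_t)$ is linear and $\rho_t$ is convex) and satisfies $|g(t,z)|\le\mu|z|+\nu|z|^2$, because $\tfrac1\epsilon\mathcal E^{\pm\mu,\pm\nu}[z(B_{t+\epsilon}-B_t)|\mathcal F_t]\to\pm(\mu|z|+\nu|z|^2)$; defining $g$ first on a countable dense set of $z$'s and extending by continuity (convexity plus quadratic growth make $g(t,\cdot)$ locally Lipschitz) produces the required joint measurability. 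Thus $g\in{\cal G}^{\mu,\nu}\cap{\cal G}_{conv}\subset{\cal G}^{\mu,\nu}\cap{\cal G}_{\theta}$.

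The core of the proof is the identity $\rho_t(\xi)=\mathcal E^{g}[-\xi|\mathcal F_t]$. By (r2) the process $\rho_\cdot(\xi)$ is a $\rho$-martingale, and by (A) it is squeezed between $\mathcal E^{\pm\mu,\pm\nu}[\mp\xi|\mathcal F_\cdot]$; hence it is simultaneously a $g^{\mu,\nu}$-submartingale and a $g^{-\mu,-\nu}$-supermartingale, with $\sup_t\exp(|\rho_t(\xi)|)\in\bigcap_{r>1}L^r(\mathcal F_T)$ by Lemma~\ref{g}(v). Applying Lemma~\ref{dmg} and its supermartingale analogue and matching the two Brownian parts, the bounded-variation parts cancel up to an absolutely continuous term, which gives
\[
\rho_t(\xi)=-\xi+\int_t^T\gamma_s\,ds-\int_t^T Z_s\,dB_s,\qquad |\gamma_s|\le\mu|Z_s|+\nu|Z_s|^2 .
\]
It then remains to show $\gamma_s=g(s,Z_s)$; I would do this first for bounded $\xi$ and, in order to reach all $z\in\mathbf R^d$, for terminal data of the form (bounded)$-zB_T$, i.e.\ for the $\rho$-martingale $t\mapsto\rho_t(-zB_T)$, which by (A) has exactly the structure $Y_t+zB_t$ with $Y$ bounded decomposed in the text, comparing on small intervals the Doob--Meyer drift with the defining local limit of $g$ by a Lebesgue-point argument, using the regularity of $\rho$ and the $\theta$-domination to control the errors. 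Once $\gamma_s=g(s,Z_s)$, the pair $(\rho_\cdot(\xi),Z)$ solves the BSDE$(g,-\xi,T)$, so $\rho_t(\xi)=\mathcal E^{g}[-\xi|\mathcal F_t]$ by uniqueness of the quadratic BSDE; the passage from bounded $\xi$ to general $\xi\in\mathcal L^{\exp}_T$ is by truncation, using Proposition~\ref{rmp2}(i) on the $\rho$-side and Lemma~\ref{g}(v)--(vi) together with $\mathcal L^{\exp}$-stability of quadratic $g$-expectations on the $\mathcal E^{g}$-side. The hard part will be making the exponential transformation and stopping-time localization carry this whole scheme through in the $\mathcal L^{\exp}$/BMO setting, where the $L^2$-estimates of \cite{CH} genuinely fail.

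Finally, for (\ref{3.3}): with $g$ convex of quadratic growth in $z$, $f(t,x)=\sup_z\{xz-g(t,z)\}$ is its Legendre--Fenchel transform and $g(t,z)=\sup_x\{zx-f(t,x)\}$ by biconjugacy. For $q\in\mathcal Q_{\xi,f}$ the inequality $g(s,Z^{-\xi,g}_s)\ge Z^{-\xi,g}_s q_s-f(s,q_s)$ makes $\rho_\cdot(\xi)$ a supersolution of the linear BSDE with driver $Z q_s-f(s,q_s)$; Girsanov under $Q^{q}$ (so that $B-\int_0^\cdot q_s\,ds$ is a $Q^{q}$-Brownian motion) and the comparison theorem then give $\rho_t(\xi)\ge E_{Q^{q}}[-\xi-\int_t^T f(s,q_s)\,ds\,|\,\mathcal F_t]$, with equality exactly when $q_s\in\partial g(s,Z^{-\xi,g}_s)$, $dt\times dP$-a.e. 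I would invoke \cite{FHT24} for the existence of such a measurable selection lying in $\mathcal Q_{\xi,f}$ and for the (uniform) integrability of $\theta^{q}$; taking the essential supremum over $\mathcal Q_{\xi,f}$ then yields (\ref{3.3}), attained at any such selection. The sublinear/coherent analogue (Corollary~\ref{rmt2}) follows by specialization, $f(t,\cdot)$ becoming the support function of $\partial g(t,0)$ and (\ref{3.3}) collapsing to (\ref{1.5}).
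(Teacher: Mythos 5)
Your overall architecture matches the paper's in several places (uniqueness via Lemma~\ref{g}(vii), the $\theta$-domination from convexity, the convergence of Proposition~\ref{rmp2}(i), the two-sided Doob--Meyer decomposition yielding an It\^o process with drift $|\gamma_s|\le g^{\mu,\nu}(Z_s)$, and the Legendre--Fenchel/Girsanov argument for (\ref{3.3})), but the two steps on which everything hinges are exactly the ones you leave unsubstantiated, and as stated they do not go through. First, you \emph{define} $g(t,z):=\lim_{\epsilon\to0^+}\tfrac1\epsilon\rho_t(-z(B_{t+\epsilon\wedge\tau_t}-B_t))$ and claim existence of the limit ``by squeezing.'' The sandwich between $\tfrac1\epsilon\mathcal E^{\pm\mu,\pm\nu}[\cdot|\mathcal F_t]$ only bounds the upper and lower limits by $\pm(\mu|z|+\nu|z|^2)$; it does not make them coincide, and convexity of $z\mapsto\rho_t(-z\Delta B)$ for each fixed $\epsilon$ gives no monotonicity or Cauchy property in $\epsilon$. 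Lemma~\ref{g}(iv) is a statement about a process that is \emph{already known} to be a $g$-expectation; using it to define $g$ is circular. The paper avoids this entirely: it applies the Doob--Meyer decomposition to the explicit $\rho$-supermartingale $-g^{\mu,\nu}(z)t+zB_t$, where matching the martingale parts of (\ref{6.1}) and (\ref{6.2}) forces $Z^{z,n}_t\equiv z$, shows $A^z_T$ is bounded, and then reads off $g(\cdot,z)$ as the drift of the resulting $\rho$-martingale via Proposition~\ref{rp}. That is the construction of $g$, and it is not interchangeable with your local-limit definition.

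Second, your identification $\gamma_s=g(s,Z_s)$ for a general terminal condition ``by a Lebesgue-point argument on small intervals'' is precisely the step that is known to fail outside the Lipschitz setting (this is why the $L^2$-machinery of \cite{CH} does not transfer), and you offer no mechanism for it. The paper never identifies the drift pointwise for general $Z$: instead it proves the zero-increment identity $\rho_t\bigl(\int_{\tau\wedge t}^{\tau}g(r,z)\,dr-\int_{\tau\wedge t}^{\tau}z\,dB_r\bigr)=0$ for constant $z$ (a direct consequence of the decomposition above and translation invariance), extends it to simple step processes using regularity (Proposition~\ref{rmp2}(ii)), and then passes to a general $Z\in\mathcal H^2_d$ by approximation, the stopping times $\sigma_k$, and Proposition~\ref{rmp2}(i), concluding $\rho_t(-Y_{\sigma_k})=\mathcal E^g[-\xi|\mathcal F_{\sigma_k\wedge t}]$ and letting $k\to\infty$. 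Without either this route or a genuine substitute for the Lebesgue-point identification, your proof of (\ref{3.2}) is incomplete; the dual-representation part of your sketch is essentially the paper's appendix argument and is fine once (\ref{3.2}) is in hand.
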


\begin{cor}\label{rmt2} Let $\rho$ be a coherent DRM satisfying (\hyperref[A]{A}). Then there exists a unique $g\in{\cal{G}}^{\mu,0}\cap{\cal{G}}_{subl}$ such that for all $\xi\in \mathcal{L}^{\exp}_T$, (\ref{3.2}) holds, and $g$ is also a unique function in ${\cal{G}}^{\mu,0}\cap{\cal{G}}_{subl}$ such that for all $\xi\in \mathcal{L}^{\exp}_T$,
     \begin{equation*}\label{3.4}
     \rho_t(\xi)=\operatorname{ess\,sup}_{Q\in\mathcal{Q}_g}E_{Q}\left[-\xi|\mathcal{F}_t\right],\quad \forall t\in[0,T],\tag{3.4}
\end{equation*} where $\mathcal{Q}_g:=\{Q^q:$ $\frac{dQ^q}{dP}=\exp\{\int_0^Tq_s\cdot dB_s-\frac{1}{2}\int_0^T|q_s|^2ds\}$ with $q_t\in H^2_d$ and $dt\times dP$-$a.e.,$ $q_t\in\partial g(t,0)\}$.
Moreover, if for all $\xi\in L^2(\mathcal{F}_T)$, $\rho$ satisfies (\ref{3.1}) with $\nu=0$, then the function $g$  is such that for all $\xi\in L^2(\mathcal{F}_T)$, both (\ref{3.2}) and (\ref{3.4}) hold.
\end{cor}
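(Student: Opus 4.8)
The corollary will be deduced from Theorem~\ref{rmt} by exploiting the extra structure of a coherent DRM. First observe that a coherent DRM is in particular a convex DRM: it satisfies (\hyperref[r1]{r1})--(\hyperref[r3]{r3}), and convexity (\hyperref[r4]{r4}) follows from subadditivity (\hyperref[r5]{r5}) together with positive homogeneity (\hyperref[r6]{r6}). Hence Theorem~\ref{rmt} applies, producing a unique $g\in\mathcal{G}^{\mu,\nu}\cap\mathcal{G}_{conv}$ with $\rho_t(\xi)=\mathcal{E}^g[-\xi|\mathcal{F}_t]$ on $\mathcal{L}^{\exp}_T$ and the dual representation (\ref{3.3}). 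It then remains to (i) upgrade $g$ to the smaller class $\mathcal{G}^{\mu,0}\cap\mathcal{G}_{subl}$, (ii) simplify (\ref{3.3}) to (\ref{3.4}), and (iii) treat the $L^2$-statement.

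For (i), I would translate the coherence axioms through (\ref{3.2}): positive homogeneity (\hyperref[r6]{r6}) gives $\mathcal{E}^g[\lambda\eta|\mathcal{F}_t]=\lambda\mathcal{E}^g[\eta|\mathcal{F}_t]$ and subadditivity (\hyperref[r5]{r5}) gives $\mathcal{E}^g[\eta+\zeta|\mathcal{F}_t]\le\mathcal{E}^g[\eta|\mathcal{F}_t]+\mathcal{E}^g[\zeta|\mathcal{F}_t]$ for all $\eta,\zeta\in\mathcal{L}^{\exp}_T$, so by Lemma~\ref{g}(ix) one gets $g\in\mathcal{G}_{subl}$. Positive homogeneity of $g(t,\cdot)$ together with $|g(t,z)|\le\mu|z|+\nu|z|^2$ then forces $|g(t,z)|=\beta^{-1}|g(t,\beta z)|\le\mu|z|+\nu\beta|z|^2$ for all $\beta>0$; letting $\beta\to0^+$ yields $|g(t,z)|\le\mu|z|$, i.e.\ $g\in\mathcal{G}^{\mu,0}$. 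Since $\mathcal{G}^{\mu,0}\cap\mathcal{G}_{subl}\subset\mathcal{G}^{\mu,\nu}\cap\mathcal{G}_{conv}$, uniqueness is inherited from Theorem~\ref{rmt}. I will also record that a sublinear $g$ with $|g(t,z)|\le\mu|z|$ is $\mu$-Lipschitz in $z$, since $g(t,z)-g(t,z')\le g(t,z-z')\le\mu|z-z'|$ and symmetrically; hence $\partial g(t,0)\subset\{x:|x|\le\mu\}$.

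For (ii), I would compute the Legendre--Fenchel transform $f(t,x)=\sup_{z}\{xz-g(t,z)\}$ of the sublinear map $g(t,\cdot)$: if $xz_0-g(t,z_0)>0$ for some $z_0$ then $\beta(xz_0-g(t,z_0))\to+\infty$ as $\beta\to+\infty$, so $f(t,x)=+\infty$; otherwise $xz\le g(t,z)$ for all $z$, i.e.\ $x\in\partial g(t,0)$, and then $f(t,x)=0$, attained at $z=0$. Thus $f(t,\cdot)$ is the convex indicator of $\partial g(t,0)$, so only those $q$ with $q_s\in\partial g(s,0)$ $dt\times dP$-a.e.\ contribute in (\ref{3.3}), for which $\int_t^Tf(s,q_s)\,ds=0$; such $q$ are bounded by $\mu$, so Novikov's criterion makes $\theta^q$ a uniformly integrable martingale and $E_{Q^q}[|\xi|]<\infty$ for $\xi\in\mathcal{L}^{\exp}_T$, placing $q\in\mathcal{Q}_{\xi,f}$. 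Hence (\ref{3.3}) collapses to (\ref{3.4}). Moreover $\partial g(t,z)\subset\partial g(t,0)$ for every $z$ when $g$ is sublinear (the subgradient inequality and homogeneity give $g(t,z)=xz$ whenever $x\in\partial g(t,z)$, whence $x\in\partial g(t,0)$), so the optimal $q$ of Theorem~\ref{rmt} lies in the set defining $\mathcal{Q}$ and the essential supremum in (\ref{3.4}) is achieved.

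For (iii), assume $\rho$ satisfies (\ref{3.1}) with $\nu=0$ on all of $L^2(\mathcal{F}_T)$. Subadditivity and this domination give $\rho_t(\xi)-\rho_t(\eta)\le\rho_t(\xi-\eta)\le\mathcal{E}^{\mu,0}[\eta-\xi|\mathcal{F}_t]$ for $\xi,\eta\in L^2(\mathcal{F}_T)$, i.e.\ the domination (\ref{1.2}). Because $g\in\mathcal{G}^{\mu,0}$ is $\mu$-Lipschitz in $z$, BSDE$(g,-\xi,T)$ is well posed in $L^2$ and $\mathcal{E}^g[-\cdot|\mathcal{F}_t]$ extends to a classical $g$-expectation on $L^2(\mathcal{F}_T)$. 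For $\xi\in L^2(\mathcal{F}_T)$ and truncations $\xi_n:=(-n)\vee(\xi\wedge n)\in\mathcal{L}^{\exp}_T$, apply (\ref{3.2}) to $\xi_n$: the left-hand side converges to $\rho_t(\xi)$ in $L^2$ by (\ref{1.2}) and the $L^2$-estimates for $\mathcal{E}^{\mu,0}$, and the right-hand side converges to $\mathcal{E}^g[-\xi|\mathcal{F}_t]$ by the $L^2$-stability of Lipschitz BSDEs, so (\ref{3.2}) holds on $L^2(\mathcal{F}_T)$ (alternatively, invoke the representation theorems of \cite{CH, Ro} and match the generator via the uniqueness in Theorem~\ref{rmt}). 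The dual representation (\ref{3.4}) on $L^2$ then follows exactly as in (ii), now using the comparison theorem for Lipschitz BSDEs. The only genuinely delicate point is this last step: one must check that the $L^2$-continuity estimates match up and that the generator from the $\mathcal{L}^{\exp}_T$-representation is the one governing the $L^2$-representation; the rest is a direct translation of the coherence axioms together with routine convex analysis.
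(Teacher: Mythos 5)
Your proposal is correct and follows essentially the same route as the paper: deduce sublinearity of $g$ from Lemma \ref{g}(ix) and the linear growth by the scaling $|g(t,z)|=\beta^{-1}|g(t,\beta z)|\le\mu|z|+\nu\beta|z|^2$, identify the Fenchel transform $f$ as the convex indicator of $\partial g(t,0)$ (with $\partial g(t,0)\subset\{|x|\le\mu\}$) so that (\ref{3.3}) collapses to (\ref{3.4}), and then truncate to extend to $L^2(\mathcal{F}_T)$. The only minor deviation is in the $L^2$ step, where you obtain the Lipschitz domination $|\rho_t(\xi)-\rho_t(\eta)|\le\mathcal{E}^{\mu,0}[|\xi-\eta|\,|\mathcal{F}_t]$ directly from subadditivity, whereas the paper derives it from the $\theta$-domination of Proposition \ref{rmp1}(ii) by letting $\theta\to1$; both arguments are valid.
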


The proofs of Theorem \ref{rmt} and Corollary \ref{rmt2} will be given in Section \ref{sec6}. In the following, we make some remarks on Theorem \ref{rmt} and Corollary \ref{rmt2}.

\begin{remark}\label{rrmt1}
\begin{itemize}
\item[(i)] For representation theorems via quadratic $g$-expectations, \cite[Theorem 6.3]{HM} proved that if the regular DRM $\rho$ satisfies certain domination conditions (see Definition 3.8(1)-(3) and (H4) in \cite{HM}), then (\ref{3.2}) holds on $L^\infty({\cal{F}}_T)$. \cite[Theorem 4.6]{Zheng24} showed that if the regular DRM $\rho$ satisfies a locally Lipschitz domination condition together with a convergence assumption (see (A$_{\rho(k)}$) and (A$_{L^\infty}$) in \cite{Zheng24}), then (\ref{3.2}) holds on $L^\infty({\cal{F}}_T)$. Both results require $\rho$ to have independent increments, which implies that the generator $g$ in (\ref{3.2}) is deterministic. In contrast, in Theorem \ref{rmt}, the terminal variables of $\rho$ are unbounded and we do not require that $\rho$ has independent increments or satisfies additional convergence properties. Moreover, for convex DRMs, (\hyperref[A]{A}) seems to be easier to verify in applications than the corresponding conditions in \cite{HM} and \cite{Zheng24}. Some remarks on our proof can be found in the Introduction.
\item[(ii)] \cite[Theorem 3.2]{DPR} proved that any convex DRM admits a dual representation of the form (\ref{3.3}) on $L^\infty({\cal{F}}_T)$ under certain additional assumptions. Our Theorem \ref{rmt}, in contrast, provides a representation for unbounded terminal variables, where the function $f$ is explicitly given by the Legendre-Fenchel transformation of the generator $g$ in (\ref{3.2}).  Corollary \ref{rmt2} provides a dual representation for coherent DRMs in which the set of probability measures is characterized via the subdifferential $\partial g(t,0)$, a distinctive feature compared to those commonly found in the literature. Our method relies on (\ref{3.2}) and therefore requires (\hyperref[A]{A}).
\end{itemize}
\end{remark}

The following is a typical example of convex DRMs.

\begin{example} (Dynamic entropy risk measure) For $\xi\in\mathcal{L}^{\exp}_T$,
\begin{equation*}
 \rho_t^{\mathrm{ent}}(\xi):=\frac{1}{2\nu}\ln E[\exp(-2\nu\xi)|\mathcal{F}_t],\quad t\in[0,T],
\end{equation*}
is a convex DRM, called dynamic entropy risk measure. By It\^{o}'s formula, we have
\begin{equation*}
\rho^{\mathrm{ent}}_t(\xi)=\mathcal{E}^{0,\nu}[-\xi|\mathcal{F}_t],\quad t\in[0,T],
\end{equation*}
and thus $\rho$ satisfies (\hyperref[A]{A}). From Theorem \ref{rmt}, it follows that
\begin{equation*}\label{3.5}
\rho^{\mathrm{ent}}_t(\xi)=\operatorname{ess\,sup}_{q\in\mathcal{Q}_{\xi,f}}E_{Q^q}
\left[-\xi-\int_t^T\frac{|q_s|^2}{4\nu}ds|\mathcal{F}_t\right],\quad \forall t\in[0,T].\tag{3.5}
\end{equation*}
In particular, when $\xi\in L^\infty(\mathcal{F}_T)$, by (\ref{a.4}) in the \hyperref[appA]{Appendix}, $\mathcal{Q}_{\xi,f}$ in (\ref{3.4}) can be reduced to $\mathcal{Q}:=\{Q^q:$ $dQ^q/dP=\theta^q_T:=\int_0^Tq_s\cdot dB_s-\frac{1}{2}\int_0^T|q_s|^2ds$ with $q_t\in\mathcal{H}_d^{BMO}\}.$ For $Q^q\in\mathcal{Q}$, the conditional relative entropy of $Q^q$ with respect to $P$:
\begin{align*}\label{3.6}
\mathbb{H}_t(Q^q|P):=E_{Q^q}\left[\ln\left(\frac{\theta^q_T}{\theta^q_t}\right)|\mathcal{F}_t\right]
&=E_{Q^q}\left[\left(\int_t^Tq_s\cdot dB_s-\frac{1}{2}\int_t^T|q_s|^2ds\right)|\mathcal{F}_t\right]\\
&=E_{Q^q}\left[\left(\int_t^Tq_s\cdot dB^{q}_s+\frac{1}{2}\int_t^T|q_s|^2ds\right)|\mathcal{F}_t\right]\\
&=\frac{1}{2}E_{Q^q}\left[\int_t^T|q_s|^2ds|\mathcal{F}_t\right],\quad \forall t\in[0,T],\tag{3.6}
\end{align*}
where $B^{q}_t:=B_t-\int_0^t{q}_sds$ is a standard Brownian motion under the probability $Q^{q}$. From (\ref{3.5}) with $\mathcal{Q}_{\xi,f}$ replaced by $\mathcal{Q}$, and (\ref{3.6}), it follows that for all $\xi\in L^\infty(\mathcal{F}_T)$,
\begin{equation*}\label{3.7}
\rho^{\mathrm{ent}}_t(\xi)=\operatorname{ess\,sup}_{Q^q\in\mathcal{Q}}\left(E_{Q^q}[-\xi|\mathcal{F}_t]
-\frac{1}{2\nu}\mathbb{H}_t(Q^q|P)\right),\quad \forall t\in[0,T].\tag{3.7}
\end{equation*}
A result similar to (\ref{3.7}) can be found in \cite{FS10}.
\end{example}
\section{Some properties of convex DRMs}
In this section, we show some properties of convex DRMs.
\begin{prop}\label{rmp1}
Let $\rho$ be a convex DRM. Then the following hold:

(i) Translation invariance: For all $\xi\in L^0({\cal{F}}_T)$, $t\in[0,T]$ and $\zeta\in L^0({\cal{F}}_t)$, we have
\begin{equation*}
\rho_t(\xi+\zeta)=\rho_t(\xi)-\zeta;
\end{equation*}

(ii) $\theta$-domination: For all $\xi, \eta\in L^0(\mathcal{F}_T)$ and $\theta\in(0,1)$, we have
\begin{align*}
  \rho_t(\xi)-\theta\rho_t(\eta)
  \leq(1-\theta)\rho_t\left(\frac{\xi-\theta\eta}{1-\theta}\right),\quad \forall t\in[0,T];
\end{align*}

(iii) $L^\infty$-domination: For all $\xi,\eta\in L^\infty({\cal{F}}_T)$ and $z\in \mathbf{R}^d$, we have
\begin{equation*}
\|\rho_t(\xi-z\cdot B_T)-\rho_t(\eta-z\cdot B_T)\|_{\mathcal{S}^\infty}\leq \|\xi-\eta\|_\infty.
\end{equation*}
\end{prop}
\begin{proof}
(i) By (\hyperref[r4]{r4}) and (\hyperref[r3]{r3}), for all $\xi\in L^0({\cal{F}}_T)$, $t\in[0,T]$ and $\zeta\in L^0({\cal{F}}_t)$, we have
\begin{equation*}
\rho_t(\xi+\zeta)\leq\theta\rho_t\left(\frac{\xi}{\theta}\right)
+(1-\theta)\rho_t\left(\frac{\zeta}{1-\theta}\right)
=\theta\rho_t\left(\frac{\xi}{\theta}\right)-\zeta,\quad \forall\theta\in(0,1),
\end{equation*}
and
\begin{equation*}
\rho_t(\xi)=\rho_t(\xi+\zeta-\zeta)
\leq\theta\rho_t\left(\frac{\xi+\zeta}{\theta}\right)
+(1-\theta)\rho_t\left(\frac{-\zeta}{1-\theta}\right)
=\theta\rho_t\left(\frac{\xi+\zeta}{\theta}\right)
+\zeta,\quad \forall\theta\in(0,1).
\end{equation*}
By (\hyperref[r4]{r4}), the function $h(x):=\rho_t(x\xi)$ is convex on $\mathbf{R}$, and thus continuous on $\mathbf{R}.$ Then, by letting $\theta\rightarrow1$ in the two inequalities above, we obtain (i).

(ii) By (\hyperref[r4]{r4}), for all $\xi, \eta\in L^0(\mathcal{F}_T)$ and $\theta\in(0,1)$, we have
\begin{align*}
  \rho_t(\xi)-\theta\rho_t(\eta)
  &=\rho_t\left(\theta\eta+(1-\theta)\frac{\xi-\theta\eta}{1-\theta}\right)
  -\theta\rho_t(\eta)\\
  &\leq(1-\theta)\rho_t\left(\frac{\xi-\theta\eta}{1-\theta}\right),\quad \forall t\in[0,T].
\end{align*}

(iii) By (\hyperref[r1]{r1}) and (i), for all $\xi,\eta\in L^\infty({\cal{F}}_T)$, $t\in[0,T]$ and $z\in\mathbf{R}^d$, we have
\begin{equation*}
\rho_t(\xi-z\cdot B_T)\leq\rho_t(\eta-z\cdot B_T-\|\xi-\eta\|_\infty)=\rho_t(\eta-z\cdot B_T)+\|\xi-\eta\|_\infty.
\end{equation*}
By interchanging $\xi$ and $\eta$, we obtain (iii).
\end{proof}

\begin{prop}\label{rmp2}
Let $\rho$ be a convex DRM satisfying (\hyperref[A]{A}). Then the following hold:

(i) Convergence: For $\xi,\xi_n\in \mathcal{L}^{\exp}_T$ such that for all $\beta>1$, $\sup_{n\geq1}E[\exp(\beta|\xi_n|))]<\infty$, if $\xi_n\rightarrow\xi,$ $dP$-$a.s.$, as $n\rightarrow\infty,$  then for all $\beta>1$,
\begin{equation*}
  \lim_{n\rightarrow\infty}E\left[\exp\left(\sup_{t\in[0,T]}\beta|\rho_t(\xi_n)
  -\rho_t(\xi)|\right)\right]=1;
\end{equation*}

(ii) Regular: For all $\xi\in\mathcal{L}^{\exp}_T$, $t\in[0,T]$ and $A\in{\mathcal {F}}_t$, we have
\begin{equation*}
\rho_t(1_A\xi)=1_A\rho_t(\xi);
\end{equation*}

(iii) Boundedness: For all $\xi\in L^\infty({\cal{F}}_T)$ and $z\in \mathbf{R}^d$, we have
\begin{equation*}
\|\rho_t(\xi-z\cdot B_T)-z\cdot B_t\|_{\mathcal{S}^\infty}<\infty.
\end{equation*}
\end{prop}

\begin{proof}
(i) Since for all $\beta>1$, $\sup_{n\geq1}E[\exp(\beta|\xi_n|))]<\infty$, by Lemma \ref{g}(v), we deduce that for all $\alpha>1$ and $\beta>1$,
\begin{equation*}\label{4.1}
\sup_{n\geq1}E\left[\exp\left(\beta\sup_{t\in{[0,T]}}{\cal{E}}^{\mu,\nu}
[\alpha|\xi_n||{\cal{F}}_t]\right)\right]<\infty. \tag{4.1}
\end{equation*}
Since $\xi_n\rightarrow\xi$, $dP$-$a.s.$, as $n\rightarrow\infty,$ by Fatou's Lemma, we get that for all $\beta>1$,
\begin{equation*}\label{4.2}
E[\exp(\beta|\xi|))]\leq\liminf_{n\rightarrow\infty}E[\exp(\beta|\xi_n|))]
\leq\sup_{n\geq1}E[\exp(\beta|\xi_n|))]<\infty,\tag{4.2}
\end{equation*}
which together with Lemma \ref{g}(v) implies that
for all $\alpha>1$ and $\beta>1$,
\begin{equation*}\label{4.3}
E\left[\exp\left(\beta\sup_{t\in{[0,T]}}{\cal{E}}^{\mu,\nu}
[\alpha|\xi||{\cal{F}}_t]\right)\right]<\infty. \tag{4.3}
\end{equation*}
By Lemma \ref{g}(v), (\ref{4.2}) and dominated convergence theorem, we get that for all $\alpha>1$ and $\beta>1$, there exists a constant $C>0$ depending only on $\beta$, $\mu$, $\nu$ and $T$, such that
\begin{align*}\label{4.4}
\lim_{n\rightarrow\infty}E\left[\exp\left(\beta\sup_{t\in{[0,T]}}{\cal{E}}^{\mu,\nu}
  \left[\alpha|\xi_n-\xi||{\cal{F}}_t\right]\right)\right]&\leq \lim_{n\rightarrow\infty}CE\left[\exp\left(\frac{\beta }{(\mu+\nu)\wedge1}2(\mu+\nu)\alpha |\xi_n-\xi|\right)\right]\\
  &=C.\tag{4.4}
\end{align*}
By Lemma \ref{g}(vi) and (\hyperref[A]{A}), we have
\begin{align*}\label{4.5}
-{\cal{E}}^{\mu,\nu}[|\xi||{\cal{F}}_t]={\cal{E}}^{-\mu,-\nu}[-|\xi||{\cal{F}}_t]\leq{\cal{E}}^{-\mu,-\nu}[-\xi|{\cal{F}}_t]
\leq\rho_t(\xi)
\leq{\cal{E}}^{\mu,\nu}[-\xi|{\cal{F}}_t]\leq{\cal{E}}^{\mu,\nu}[|\xi||{\cal{F}}_t].\tag{4.5}
\end{align*}
By Proposition \ref{rmp1}(ii), (\ref{4.5}) and Lemma \ref{g}(viii)(vi), for all $n\geq1$, $t\in[0,T]$, and $\theta\in(0,1)$, we have
\begin{align*}
  \rho_t(\xi_n)-\rho_t(\xi)&\leq\rho_t(\xi_n)-\theta\rho_t(\xi)-(1-\theta)\rho_t(\xi)\\
  &\leq(1-\theta)\rho_t\left(\frac{\xi_n-\theta\xi}{1-\theta}\right)
  +(1-\theta)|\rho_t(\xi)|\\
  &\leq(1-\theta){\cal{E}}^{\mu,\nu}\left[|\xi_n|+\frac{\theta}{1-\theta}|\xi_n-\xi||{\cal{F}}_t\right]
  +(1-\theta){\cal{E}}^{\mu,\nu}[|\xi||{\cal{F}}_t]\\
  &\leq\frac{1-\theta}{2}\left({\cal{E}}^{\mu,\nu}[2|\xi_n||{\cal{F}}_t]
  +{\cal{E}}^{\mu,\nu}\left[\frac{2\theta}{1-\theta}|\xi_n-\xi||{\cal{F}}_t\right]\right)
  +(1-\theta){\cal{E}}^{\mu,\nu}[|\xi||{\cal{F}}_t]\\
  &\leq(1-\theta)\left({\cal{E}}^{\mu,\nu}[2|\xi_n||{\cal{F}}_t]+{\cal{E}}^{\mu,\nu}[2|\xi||{\cal{F}}_t]
  +{\cal{E}}^{\mu,\nu}\left[\frac{2\theta}{1-\theta}|\xi_n-\xi||{\cal{F}}_t\right]\right).
\end{align*}
By interchanging $\xi_n$ and $\xi$, we have
\begin{align*}
|\rho_t(\xi_n)-\rho_t(\xi)|\leq(1-\theta)\left({\cal{E}}^{\mu,\nu}[2|\xi_n||{\cal{F}}_t]+{\cal{E}}^{\mu,\nu}[2|\xi||{\cal{F}}_t]
  +{\cal{E}}^{\mu,\nu}\left[\frac{2\theta}{1-\theta}|\xi_n-\xi||{\cal{F}}_t\right]\right).
\end{align*}
Then, by Jensen's inequality and H\"{o}lder's inequality, we get that for all $n\geq1$ and $\theta\in(0,1)$,
\begin{align*}
&E\left[\exp\left(\sup_{t\in{[0,T]}}\beta|\rho_t(\xi_n)-\rho_t(\xi)|\right)\right]\\
&\leq E\left[\exp\left((1-\theta)\sup_{t\in{[0,T]}}\beta\left({\cal{E}}^{\mu,\nu}[2|\xi_n||{\cal{F}}_t]
+{\cal{E}}^{\mu,\nu}[2|\xi||{\cal{F}}_t]
  +{\cal{E}}^{\mu,\nu}\left[\frac{2\theta}{1-\theta}|\xi_n-\xi||{\cal{F}}_t\right]\right)\right)\right]\\
&\leq\left[E\exp\left(\sup_{t\in{[0,T]}}\beta\left({\cal{E}}^{\mu,\nu}[2|\xi_n||{\cal{F}}_t]
+{\cal{E}}^{\mu,\nu}[2|\xi||{\cal{F}}_t]
  +{\cal{E}}^{\mu,\nu}\left[\frac{2\theta}{1-\theta}|\xi_n-\xi||{\cal{F}}_t\right]\right)\right)\right]^{(1-\theta)}\\
&\leq\left[\sup_{n\geq1}E\left[\exp\left(4\beta\sup_{t\in{[0,T]}}
{\cal{E}}^{\mu,\nu}[2|\xi_n||{\cal{F}}_t]\right)\right]\right]^{\frac{(1-\theta)}{4}}
\times\left[E\left[\exp\left(4\beta\sup_{t\in{[0,T]}}
{\cal{E}}^{\mu,\nu}[2|\xi||{\cal{F}}_t]\right)\right]\right]^{\frac{(1-\theta)}{4}}\\
  &\ \ \ \ \times\left[E\left[\exp\left(2\beta\sup_{t\in{[0,T]}}{\cal{E}}^{\mu,\nu}
  \left[\frac{2\theta}{1-\theta}|\xi_n-\xi||{\cal{F}}_t\right]\right)\right]\right]^{\frac{(1-\theta)}{2}}
\end{align*}
In view of (\ref{4.1}), (\ref{4.3}) and (\ref{4.4}), letting first $n\rightarrow\infty$ and then $\theta\rightarrow1$, we obtain (i).

(ii) Set $\bar{\xi}_n:=1_{\{|\xi|\leq n\}}\xi,\ n\geq1.$ As showed in Kl\"{o}ppel and Schweizer \cite[Page 602]{KS}, for all $t\in[0,T]$ and $A\in{\mathcal {F}}_t$, we have $1_A\rho_t(1_A\bar{\xi}_n)=1_A\rho_t(\bar{\xi}_n)$, which with the fact that $\rho_t(0)=0$, implies
\begin{equation*}\label{4.6}
\rho_t(1_A\bar{\xi}_n)=1_A\rho_t(1_A\bar{\xi}_n)+1_{A^c}\rho_t(1_A\bar{\xi}_n)=1_A\rho_t(\bar{\xi}_n)+1_{A^c}\rho_t(0)=1_A\rho_t(\bar{\xi}_n).\tag{4.6}
\end{equation*}
By (i), we deduce that for all $p>1$ and $A\in{\mathcal {F}}_t$, $\lim_{n\rightarrow\infty}E[\sup_{t\in[0,T]}|\rho_t(1_A\bar{\xi}_n)-\rho_t(1_A\xi)|^p]=0.$ From this and (4.6), we get (ii).

(iii) For $\xi\in L^\infty({\cal{F}}_T)$ and $z\in \textbf{R}^d$, we consider the BSDE$(g^{\mu,\nu}, -\xi+z\cdot B_T, T):$
\begin{equation*}
  {\cal{E}}^{\mu,\nu}[-\xi+z\cdot B_T|{\cal{F}}_t]=-\xi+z\cdot B_T+\int_t^Tg^{\mu,\nu}(Z_s)ds-\int_t^TZ_s\cdot dB_s, \quad t\in[0,T].
\end{equation*}
Then, we have
\begin{equation*}\label{4.7}
  {\cal{E}}^{\mu,\nu}[-\xi+z\cdot B_T|{\cal{F}}_t]-z\cdot B_t=-\xi+\int_t^Tg^{\mu,\nu}(Z_s)ds-\int_t^T(Z_s-z)\cdot dB_s, \quad t\in[0,T],\tag{4.7}
\end{equation*}
which implies that $({\cal{E}}^{\mu,\nu}[-\xi+z\cdot B_T|{\cal{F}}_t]-z\cdot B_t,Z_t-z)\in\mathcal{L}^{\exp}_\mathcal{F}\times\bigcap_{r>1}\mathcal{H}_d^r$ is a solution to the BSDE$(g^{\mu,\nu}(Z_s),-\xi,T)$. Let $(y^1_t,z_t^1)\in{\cal{S}}^\infty\times H_d^2$ be a solution to the BSDE$(g^{\mu,\nu}(|\cdot|+|z|),-\xi,T)$. Since $dt\times dP$-$a.e.,$
\begin{equation*}
0\leq g^{\mu,\nu}(Z_t)\leq g^{\mu,\nu}(|Z_t-z|+|z|),
\end{equation*}
by (\ref{4.7}) and the comparison theorem (see \cite[Theorem 5]{BH08} and its proof), we get
\begin{equation*}
E[-\xi|{\cal{F}}_t]\leq{\cal{E}}^{\mu,\nu}[-\xi+z\cdot B_T|{\cal{F}}_t]-z\cdot B_t\leq y^1_t,\quad \forall t\in[0,T].
\end{equation*}
which implies that ${\cal{E}}^{\mu,\nu}[-\xi+z\cdot B_T|{\cal{F}}_t]-z\cdot B_t\in{\cal{S}}^\infty.$ This also gives
\begin{equation*}
{\cal{E}}^{-\mu,-\nu}[-\xi+z\cdot B_T|{\cal{F}}_t]-z\cdot B_t
=-({\cal{E}}^{\mu,\nu}[\xi-z\cdot B_T|{\cal{F}}_t]+z\cdot B_t)\in{\cal{S}}^\infty.
\end{equation*}
Then, by (\hyperref[A]{A}), we obtain (iii).
\end{proof}

The following shows that under (\hyperref[A]{A}), for all $\xi\in\mathcal{L}^{\exp}_T,$ $\rho_t(\xi)$ is an It\^{o} process and thus all the paths of $\rho_t(\xi)$ are continuous.
\begin{prop}\label{rp} Let $\rho$ be a convex DRM satisfying (\hyperref[A]{A}). Then for all $\xi\in\mathcal{L}^{\exp}_T,$ there exists a pair $(g_t^\xi, Z_t^\xi)\in H_1^1\times H_d^2$ such that
\begin{equation*}\label{4.8}
\rho_t(\xi)=-\xi+\int_t^Tg_s^\xi ds-\int_t^TZ_s^\xi \cdot dB_s,\quad t\in[0,T],\tag{4.8}
\end{equation*}
with $|g_t^\xi|\leq g^{\mu,\nu}(Z_t^\xi)$, $dt\times dP$-$a.e.$ Moreover, for all $\xi,\eta\in\mathcal{L}^{\exp}_T$ and $\theta\in(0,1)$, we have $dt\times dP$-$a.e.,$
\begin{equation*}
\frac{g_t^\xi-\theta g_t^\eta}{1-\theta}\leq g^{\mu,\nu}\left(\frac{Z_t^\xi-\theta Z_t^\eta}{1-\theta}\right).
\end{equation*}
\end{prop}

\begin{proof} For all $\xi\in\mathcal{L}^{\exp}_T$ and $0\leq s\leq t\leq T$, by (\hyperref[A]{A}), we have $\rho_s(\xi)=\rho_s(-\rho_t(\xi))\leq {\cal{E}}^{\mu,\nu}[\rho_t(\xi)|{\cal{F}}_s]$ and
\begin{equation*}
-\rho_s(\xi)=-\rho_s(-\rho_t(\xi))\leq-{\cal{E}}^{-\mu,-\nu}[\rho_t(\xi)|{\cal{F}}_s]
={\cal{E}}^{\mu,\nu}[-\rho_t(\xi)|{\cal{F}}_s],
\end{equation*}
which imply that both $\rho_t(\xi)$ and $-\rho_t(\xi)$ are $g^{\mu,\nu}$-submartingales. From Lemma \ref{dmg}, it follows that there exist $(Z^1_t,A^1_t)$ and $(Z^2_t,A^2_t)$ in $H_d^2\times\cal{A}$ such that
\begin{equation*}\label{4.9}
\rho_t(\xi)=-\xi+\int_t^Tg^{\mu,\nu}(Z_s^1)ds-A^1_T+A^1_t
-\int_t^TZ_s^1\cdot dB_s,\quad t\in[0,T],\tag{4.9}
\end{equation*}
and
\begin{equation*}\label{4.10}
\rho_t(\xi)=-\xi-\int_t^Tg^{\mu,\nu}(-Z_s^2)ds+A^2_T
-A^2_t-\int_t^T-Z_s^2\cdot dB_s,\quad t\in[0,T].\tag{4.10}
\end{equation*}
Comparing the martingale parts and the bounded variation parts of (\ref{4.9}) and (\ref{4.10}), we get that $dt\times dP$-$a.e.,$
\begin{align*}
  Z_t^1&=-Z_t^2,\\
g^{\mu,\nu}(Z_t^1)dt-dA^1_t&=-g^{\mu,\nu}(-Z_t^2)dt+dA^2_t,
\end{align*}
which imply that $dt\times dP$-$a.e.,$
\begin{equation*}
2g^{\mu,\nu}(Z_t^1)dt=dA^1_t+dA^2_t.
\end{equation*}
Since $A^1_t$ and $A^2_t$ are both increasing and continuous, by the Lebesgue decomposition for increasing functions, we can get that $A^1_t$ and $A^2_t$ are both absolutely continuous. Thus, we can find two nonnegative processes $a^1_t, a^2_t\in H_1^1$ such that $dt\times dP$-$a.e.,$  $dA^1_t=a^1_tdt$ and $dA^2_t=a^2_tdt.$ This implies $dt\times dP$-$a.e.,$
\begin{equation*}\label{4.11}
2g^{\mu,\nu}(|Z_t^1|)=a^1_t+a^2_t.\tag{4.11}
\end{equation*}
Set $Z^\xi_t:=Z_t^1$ and $g^\xi_t:=g^{\mu,\nu}(Z_t^{\xi})-a^1_t.$ From (\ref{4.9}) and (\ref{4.11}), it follows that (\ref{4.8}) holds and $dt\times dP$-$a.e.,$ $|g^\xi_t|\leq g^{\mu,\nu}(Z_t^{\xi})$.

Moreover, by Proposition \ref{rmp1}(ii) and (\hyperref[A]{A}), for $\xi, \eta\in\mathcal{L}^{\exp}_T$ and $\theta\in(0,1)$, we have
\begin{align*}
  \frac{\rho_s(\xi)-\theta\rho_s(\eta)}{1-\theta}
  &=\frac{\rho_s(-\rho_t(\xi))-\theta\rho_s(-\rho_t(\eta))}{1-\theta}\\
  &\leq\rho_s\left(\frac{-\rho_t(\xi)-\theta(-\rho_t(\eta))}{1-\theta}\right)\\
  &={\cal{E}}^{\mu,\nu}\left[\frac{\rho_t(\xi)-\theta \rho_t(\eta)}{1-\theta}|{\cal{F}}_s\right],\quad \forall 0\leq s\leq t\leq T.
\end{align*}
This means that $\frac{\rho_t(\xi)-\theta\rho_t(\eta)}{1-\theta}$ is a $g^{\mu,\nu}$-submartingale. Then by Lemma \ref{dmg} again, there exists a pair $(z_t,A_t)\in H_d^2\times\cal{A}$ such that
\begin{equation*}
\frac{\rho_t(\xi)-\theta\rho_t(\eta)}{1-\theta}=\frac{-\xi-\theta (-\eta)}{1-\theta}+\int_t^Tg^{\mu,\nu}(z_t)ds-A_T+A_t-\int_t^Tz_t\cdot dB_s,\quad t\in[0,T].
\end{equation*}
Moreover, by (\ref{4.8}), we have
\begin{equation*}
\frac{\rho_t(\xi)-\theta\rho_t(\eta)}{1-\theta}=\frac{-\xi-\theta (-\eta)}{1-\theta}+\int_t^T\left(\frac{g_s^\xi-\theta g_s^\eta}{1-\theta}\right)ds-\int_t^T\left(\frac{Z_s^\xi-\theta Z_s^\eta}{1-\theta}\right)\cdot dB_s,\quad t\in[0,T].
\end{equation*}
Comparing the martingale parts and the bounded variation parts of the two equations above, we get that $dt\times dP$-$a.e.,$
\begin{equation*}
\frac{g_t^\xi-\theta g_t^\xi}{1-\theta}\leq g^{\mu,\nu}(z_t)\quad \textrm{with}\quad z_t=\frac{Z_t^\xi-\theta Z_t^\eta}{1-\theta}.
\end{equation*}
The proof is complete.
\end{proof}

\begin{remark}\label{r4.4}
Let $\rho$ be a convex DRM satisfying (\hyperref[A]{A}). For $\xi\in\mathcal{L}^{\exp}_T$ and $\tau\in{\cal{T}}_{0,T}$, we set $\rho_\tau(\xi)(\omega):=\rho_{\tau(\omega)}(\xi)(\omega).$ Then, it is not hard to check that on the domain $\mathcal{L}^{\exp}_T$, all the results in (\hyperref[r1]{r1})-(\hyperref[r7]{r7}), Proposition \ref{rmp1}(i)(ii), Proposition \ref{rmp2}(ii) and (\ref{4.8}) still hold, when $t$ therein are replaced by any stopping time $\tau\in{\cal{T}}_{0,T}.$ We only show the case of Proposition \ref{rmp1}(i). For all $\tau\in{\cal{T}}_{0,T}$, we can find a decreasing
sequence $\{\tau_n\}_{n\geq1}\subset{\cal{T}}_{0,T}$, which all take values in some finite set such that $\tau_n\searrow\tau$ as $n\rightarrow\infty.$ We assume that $\tau_n$ take values: $t^n_{1},\cdots,t^n_{n_k}$, by Proposition \ref{rmp1}(i), for all $n\geq1$ and $\xi\in\mathcal{L}^{\exp}_T, \zeta\in\mathcal{L}^{\exp}_\tau$, we have
\begin{equation*}
\rho_{\tau_n}(\xi+\zeta)=\sum_{i=1}^{n_k}1_{\{\tau_n=t^n_{i}\}}\rho_{t^n_{i}}(\xi+\zeta)
=\sum_{i=1}^{n_k}1_{\{\tau_n=t^n_{i}\}}\rho_{t^n_{i}}(\xi)-\zeta=\rho_{\tau_n}(\xi)-\zeta.
\end{equation*}
Since all the paths of $\rho_t(\xi)$ are continuous, letting $n\rightarrow\infty,$ we obtain $\rho_{\tau}(\xi+\zeta)=\rho_{\tau}(\xi)-\zeta$.
\end{remark}
\section{A Doob-Meyer decomposition for $\rho$-supermartingales}
In this section, we will establish a decomposition of Doob-Meyer's type for $\rho$-supermartingales, which plays a key role in the proof of Theorem \ref{rmt}. We first introduce the notion of $\rho$-martingales (resp. $\rho$-supermartingales, $\rho$-submartingales).
\begin{definition}
Let $\rho$ be a convex DRM, and let $Y_t$ be a progressively measurable process such that for all $t\in [0,T],$ $Y_t\in\mathcal{L}^{\exp}_t$. If for all $0\leq s\leq t\leq T,$ we have $\rho_s(-Y_t)=Y_s$ (resp. $\leq,\ \geq$), then the process $Y_t$ is called a $\rho$-martingale (resp. $\rho$-supermartingale, $\rho$-submartingale).
\end{definition}
We have the following optional stopping theorem:
\begin{prop}\label{opt} Let $\rho$ be a convex DRM satisfying (\hyperref[A]{A}). Let $Y_t$ be a $\rho$-supermartingale (resp. $\rho$-submartingale). Then, for all $\sigma, \tau\in{\cal{T}}_{0,T},$
\begin{equation*}
\rho_\sigma(-Y_\tau)\leq Y_{\sigma\wedge\tau},\ (\textrm{resp.} \geq Y_{\sigma\wedge\tau}).
\end{equation*}
\end{prop}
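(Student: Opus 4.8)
The plan is to reduce the optional stopping statement for $\rho$-supermartingales to the discrete-time (dyadic) case, where the supermartingale inequality together with time-consistency (r2) and the regularity property from Proposition~\ref{rmp2}(ii) can be combined directly, and then to pass to the limit using the path regularity of $\rho_t(\xi)$ (Proposition~\ref{rp}) and the convergence property in Proposition~\ref{rmp2}(i). Throughout I will treat the $\rho$-supermartingale case; the $\rho$-submartingale case follows by the same argument with the inequalities reversed (or by noting that $-Y$-type symmetry does not hold for $\rho$, so one genuinely reruns the argument).

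First I would prove the statement for stopping times $\sigma,\tau$ taking values in a common finite set $D=\{t_1<\dots<t_m\}\subset[0,T]$. On $D$, for $s\le t$ both in $D$, the supermartingale property gives $\rho_s(-Y_t)\le Y_s$, and by the regularity property (valid for stopping times, as noted in the Remark following Proposition~\ref{rmp3}) one has $\rho_\sigma(-Y_\tau)=\sum_{i}1_{\{\sigma=t_i\}}\rho_{t_i}(-Y_\tau)$. The key step is to show $\rho_{t_i}(-Y_\tau)\le Y_{t_i\wedge\tau}$ for each fixed $t_i\in D$; this is done by backward induction on $t_i$ from $t_m$ down, using time-consistency (r2) to write, on the event $\{\tau>t_i\}$, $\rho_{t_i}(-Y_\tau)=\rho_{t_i}(-\rho_{t_{i+1}}(-Y_\tau))$ (after splitting off the atom $\{\tau=t_i\}$ via regularity), applying the induction hypothesis $\rho_{t_{i+1}}(-Y_\tau)\le Y_{t_{i+1}\wedge\tau}$ together with monotonicity (r1), and then invoking the supermartingale inequality $\rho_{t_i}(-Y_{t_{i+1}\wedge\tau})\le Y_{t_i\wedge(t_{i+1}\wedge\tau)}=Y_{t_i\wedge\tau}$; here the last equality uses $t_i\le t_{i+1}$. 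On the complementary event $\{\tau\le t_i\}$ one uses constant preservation (r3), since $Y_\tau$ is $\mathcal{F}_{t_i}$-measurable there, giving $\rho_{t_i}(-Y_\tau)=Y_\tau=Y_{t_i\wedge\tau}$. Combining the two events via regularity yields the claim on $D$.

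For general $\sigma,\tau\in\mathcal{T}_{0,T}$, I approximate from above by dyadic stopping times: let $\sigma_n\searrow\sigma$ and $\tau_n\searrow\tau$ with $\sigma_n,\tau_n$ valued in the finite dyadic set $D_n=\{kT/2^n:0\le k\le 2^n\}$. Since $Y$ is a $\rho$-supermartingale, the first part applies on $D_n$ to give $\rho_{\sigma_n}(-Y_{\tau_n})\le Y_{\sigma_n\wedge\tau_n}$. Now let $n\to\infty$: the right-hand side $Y_{\sigma_n\wedge\tau_n}\to Y_{\sigma\wedge\tau}$ because $Y$ has RCLL (indeed continuous, by Proposition~\ref{rp} applied pathwise after noting $Y_t=\rho_t(-Y_T)+(\text{correction})$, or more simply because any $\rho$-supermartingale dominated as in (\ref{3.1}) is a $g^{\mu,\nu}$-submartingale up to sign and hence RCLL by Lemma~\ref{rcll}) and $\sigma_n\wedge\tau_n\searrow\sigma\wedge\tau$. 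For the left-hand side I combine two facts: $Y_{\tau_n}\to Y_\tau$ in the $\mathcal{L}^{\exp}$ sense (same RCLL argument plus the uniform exponential integrability coming from the domination (\ref{3.1}) and Lemma~\ref{g}(v)), so by the convergence property Proposition~\ref{rmp2}(i) one gets $\rho_t(-Y_{\tau_n})\to\rho_t(-Y_\tau)$ uniformly in $t$ in the relevant sense; and then $\rho_{\sigma_n}(-Y_\tau)\to\rho_{\sigma}(-Y_\tau)$ by the continuity of the paths $t\mapsto\rho_t(-Y_\tau)$ from Proposition~\ref{rp}. Passing to the limit in the inequality gives $\rho_\sigma(-Y_\tau)\le Y_{\sigma\wedge\tau}$.

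The main obstacle I anticipate is the limiting step on the left-hand side, specifically justifying that $\rho_{\sigma_n}(-Y_{\tau_n})\to\rho_\sigma(-Y_\tau)$ along the dyadic approximations: one must decouple the two convergences $\tau_n\to\tau$ (handled by Proposition~\ref{rmp2}(i), which requires the uniform exponential integrability of $\{Y_{\tau_n}\}$) and $\sigma_n\to\sigma$ (handled by path-continuity of $\rho_\cdot(-Y_\tau)$ from Proposition~\ref{rp}, which applies once $Y_\tau\in\mathcal{L}^{\exp}_T$). Verifying $\sup_n E[\exp(\beta|Y_{\tau_n}|)]<\infty$ for all $\beta>1$ is where the domination hypothesis (\ref{3.1}) is essential: it sandwiches $Y_{\tau_n}$ between $\mathcal{E}^{\pm\mu,\pm\nu}$-values of $-Y_T$, and Lemma~\ref{g}(v) then bounds the relevant exponential moments uniformly. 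The secondary subtlety is the bookkeeping on the atoms $\{\tau=t_i\}$ in the backward induction, which is purely a matter of careful use of the regularity property (r8) (established for $\rho$ in Proposition~\ref{rmp2}(ii)) and presents no real difficulty.
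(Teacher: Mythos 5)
Your overall route is the same as the paper's: the paper first establishes the local property (\ref{5.1}) via Proposition \ref{rmp2}(ii), takes the RCLL version of $Y$ (Lemma \ref{rcll} and Remark \ref{rrcll}), and then follows the proof of the optional stopping theorem in \cite[Theorem 7.4]{Peng04}, which is precisely your scheme of discretizing the stopping times, running a backward induction with (r1)--(r3) and the local property, and passing to the limit using right continuity together with the convergence in Proposition \ref{rmp2}(i). Your discrete-time induction and the decoupling of the two limits $\tau_n\to\tau$ and $\sigma_n\to\sigma$ are fine.

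The one step that does not go through as you have written it is the justification of $\sup_n E[\exp(\beta|Y_{\tau_n}|)]<\infty$, which you correctly single out as the crux. The $\rho$-supermartingale property combined with (\hyperref[A]{A}) yields only the one-sided bound $Y_{\tau_n}\ge \rho_{\tau_n}(-Y_T)\ge{\cal{E}}^{-\mu,-\nu}[Y_T|{\cal{F}}_{\tau_n}]$; there is no matching upper bound, since a supermartingale is not dominated from above by any conditional evaluation of its terminal value. So the claimed two-sided ``sandwich'' by ${\cal{E}}^{\pm\mu,\pm\nu}$-values of $-Y_T$ is false for a supermartingale, and the argument via Lemma \ref{g}(v) controls only the negative part $\sup_t Y_t^-$ (through the $Q$-submartingale $\exp(-2\nu Y_t)$ of Lemma \ref{rcll}), not $\sup_t Y_t^+$. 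To close this you either need the additional hypothesis $\exp(\sup_{t\in[0,T]}|Y_t|)\in\bigcap_{r>1}L^r({\cal{F}}_T)$, or you must observe that in every application in the paper the process is either a $\rho$-martingale (where the domination condition does give a genuine two-sided sandwich by ${\cal{E}}^{\pm\mu,\pm\nu}[Y_T|{\cal{F}}_{\tau_n}]$) or of the form $Y_t+zB_t$ with $Y\in{\cal{S}}^\infty$, for which the required uniform exponential moments are immediate. The paper's one-line proof leaves this same point implicit, but since you made the sandwich claim explicitly, it should be corrected.
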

\begin{proof} By Proposition \ref{rmp2}(ii) and the proof of \cite[Lemma 3.5]{CH}, for all $\xi,\eta\in\mathcal{L}^{\exp}_T$, $t\in[0,T]$ and $A\in{\mathcal {F}}_t,$ we have
\begin{equation*}\label{5.1}
\rho_t(1_A\xi+1_{A^c}\eta)=1_A\rho_t(\xi)+1_{A^c}\rho_t(\eta).\tag{5.1}
\end{equation*}
We only prove the case that $Y_t$ is a $\rho$-supermartingale. In this case, we have for all $0\leq s\leq t\leq T,$ $\mathcal{E}^{-\mu,-\nu}[Y_t]\leq\rho_s(-Y_t)\leq Y_s$. This implies that $Y_t$ is a $g^{-\mu,-\nu}$-supermartingale. Thus, each path of $Y_t$ is right continuous (see Remark \ref{rrcll}). In view of this, (\ref{5.1}) and the convergence in Proposition \ref{rmp2}(i), following the proof of \cite[Theorem 7.4]{Peng04}, we can obtain the desired result.
\end{proof}

We have the following well-posedness result:
\begin{prop}\label{pr5.3} Let $\rho$ be a convex DRM satisfying (\hyperref[A]{A}). Let $Y_t\in\mathcal{S}^{\infty}$, $z\in \mathbf{R}^d$, and $Y_t+z\cdot B_t$ be a $\rho$-supermartingale. Then, for any $n\geq1$, the following backward equation:
\begin{equation*}\label{5.2}
y_t^n=\rho_t\left(-Y_T-z\cdot (B_T-B_t)-\int_t^Tn(Y_s-y_s^n)ds\right),\quad t\in[0,T],\tag{5.2}
\end{equation*}
admits a unique solution $y_t^n\in{\cal{S}}^\infty$. Moreover, the following hold:

(i) For all $n\geq1$ and $t\in[0,T]$, $y_t^n\leq y_t^{n+1}\leq Y_t;$

(ii) For all $n\geq1$, $y^n_t+z\cdot B_t$ is a $\rho$-supermartingale.
\end{prop}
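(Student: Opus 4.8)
The plan is to set up the sequence $(y^n_t)$ by a Picard-type iteration within $\mathcal{S}^\infty$, using the $L^\infty$-contraction estimate of Proposition \ref{rmp3} to get existence and uniqueness for each fixed $n$, and then to prove the monotonicity $y^n_t\le y^{n+1}_t\le Y_t$ together with the $\rho$-supermartingale property of $y^n_t+zB_t$ by comparison arguments. First I would fix $n\ge1$ and, for a given process $w\in\mathcal{S}^\infty$, define $\Phi(w)_t:=\rho_t\bigl(-Y_T-z(B_T-B_t)-\int_t^T n(Y_s-w_s)\,ds\bigr)$. Writing the argument as $-\bigl(Y_T-\int_t^T nY_s\,ds\bigr)+zB_t - zB_T - \int_t^T(-nw_s)\,ds$ and using translation invariance (Proposition \ref{rmp1}(i)) to pull out the $\mathcal{F}_t$-measurable term $zB_t$, one sees that $\Phi(w)_t - zB_t = \rho_t\bigl(\eta_w - zB_T\bigr)$ for a suitable bounded $\eta_w$ depending linearly on $(w_s)_{s\ge t}$; Proposition \ref{rmp3} then shows $\Phi(w)_t-zB_t\in\mathcal{S}^\infty$, hence $\Phi(w)\in\mathcal{S}^\infty$, and gives the Lipschitz bound $\|\Phi(w)-\Phi(\tilde w)\|_{\mathcal{S}^\infty}\le n\,T\,\|w-\tilde w\|_{\mathcal{S}^\infty}$ via monotonicity (r1). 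This is a contraction only for small $T$, so I would instead run the iteration on a short subinterval $[T-\delta,T]$ with $n\delta<1$, solve there, and then paste solutions together over finitely many such subintervals using time-consistency (r2); this is the standard device for BSDE-type fixed points and is the step where one must be a little careful, though it is routine.

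Next I would establish $y^n_t\le y^{n+1}_t$. Having constructed $y^n$ and $y^{n+1}$, I compare the two backward equations: the driver difference is $n(Y_s-y^n_s)$ versus $(n+1)(Y_s-y^{n+1}_s)$. The clean way is to iterate monotonicity: starting the Picard scheme for level $n+1$ from the initial guess $y^n$ (rather than from $0$), one checks using (r1) and the fact that $n(Y_s-w_s)\le (n+1)(Y_s-w_s)$ whenever $w_s\le Y_s$ that each successive iterate increases and stays below $Y$, and the limit is $y^{n+1}$ by uniqueness; this simultaneously yields $y^n_t\le y^{n+1}_t\le Y_t$. The upper bound $y^n_t\le Y_t$ alone is obtained directly: since $Y_t+zB_t$ is a $\rho$-supermartingale, $\rho_t(-Y_T-z(B_T-B_t))\le Y_t$ by definition (after pulling out $zB_t$ via translation invariance and invoking the supermartingale inequality between $t$ and $T$), and then adding the nonnegative drift term $-\int_t^T n(Y_s-y^n_s)\,ds\ge 0$ only decreases the argument, hence by (r1) increases $\rho_t$—so one actually needs to run this as part of the same monotone iteration to keep $y^n_s\le Y_s$ available along the way. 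I would phrase it as: the map $w\mapsto\Phi(w)$ is monotone in $w$ on the order interval $\{w:w\le Y\}$ and maps it into itself (the into-itself part being exactly $\Phi(w)\le Y$ when $w\le Y$), so by monotone iteration from $w\equiv$ (a suitable lower bound, e.g. the solution with $n$ replaced by $0$, or simply $-\|Y\|_\infty-$const) the fixed point $y^n$ lies in this interval.

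Finally, for the $\rho$-supermartingale property of $y^n_t+zB_t$: fix $0\le s\le t\le T$. Using time-consistency (r2) applied to equation (\ref{5.2}) at times $s$ and $t$, together with translation invariance to handle the $zB$ terms, one gets that $y^n_s = \rho_s\bigl(-y^n_t - z(B_t-B_s) - \int_s^t n(Y_u-y^n_u)\,du\bigr)$. Since $\int_s^t n(Y_u-y^n_u)\,du\ge 0$ by the bound $y^n_u\le Y_u$ just proved, and since subtracting a nonnegative $\mathcal{F}_t$-adapted quantity inside $\rho_s$ only increases it by (r1), we obtain $y^n_s\ge \rho_s(-y^n_t-z(B_t-B_s)) = \rho_s(-(y^n_t+zB_t)) + zB_s$, i.e. $y^n_s+zB_s\ge\rho_s(-(y^n_t+zB_t))$, which is exactly the $\rho$-supermartingale inequality. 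The main obstacle I anticipate is purely organizational: making the fixed-point argument and the monotonicity argument cohabit cleanly, since one needs $y^n\le Y$ to run the supermartingale step, but one also uses a monotone-iteration setup to get $y^n\le Y$ in the first place; I would resolve this by doing the monotone iteration once and for all, deriving existence, uniqueness, the order bounds $y^n\le y^{n+1}\le Y$, and the supermartingale property all from the monotonicity and Lipschitz properties of $\Phi$ together with (r1), (r2) and Proposition \ref{rmp1}(i).
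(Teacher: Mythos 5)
Your existence--uniqueness step (contraction via Proposition \ref{rmp3} on subintervals of length $<1/n$, patched together by time-consistency) and your final step (deriving the $\rho$-supermartingale property of $y^n_t+zB_t$ from the identity $y^n_s=\rho_s(-y^n_t-z(B_t-B_s)-\int_s^t n(Y_u-y^n_u)\,du)$ and the sign of the drift once $y^n\le Y$ is known) coincide with the paper's Steps 1 and 4. The genuine gap is in the middle: your monotone-iteration scheme for $y^n\le Y$ and $y^n\le y^{n+1}$ rests on the claim that $w\mapsto\Phi(w)$ is order-preserving on $\{w\le Y\}$ and maps that set into itself, and both halves of this claim are false. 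If $w\le\tilde w$ then $-\int_t^T n(Y_s-w_s)\,ds\le-\int_t^T n(Y_s-\tilde w_s)\,ds$, so by (\hyperref[r1]{r1}) one gets $\Phi(w)_t\ge\Phi(\tilde w)_t$: the map is order-\emph{reversing}, so iterating it does not produce a monotone sequence. Worse, it does not preserve $\{w\le Y\}$: take $\rho_t(\xi)=E[-\xi|\mathcal F_t]$, $z=0$, $Y\equiv0$ and $w\equiv-1$; then $\Phi(w)_t=n(T-t)>0=Y_t$ for $t<T$. Your own sign check here is off: for $w\le Y$ the drift $-\int_t^T n(Y_s-w_s)\,ds$ is non\emph{positive}, so by (\hyperref[r1]{r1}) it pushes $\Phi(w)$ \emph{up}, above $\rho_t(-Y_T-z(B_T-B_t))\le Y_t$ rather than below $Y_t$. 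Consequently the iteration you describe cannot deliver the bounds $y^n\le y^{n+1}\le Y$, and the same obstruction defeats your plan of starting the level-$(n+1)$ Picard scheme from $y^n$.

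The paper obtains these bounds by a first-crossing argument instead. Assuming $P(y^n_{t_0}>Y_{t_0}+1/k)>0$, it stops at $\tau=\inf\{t\ge t_0: y^n_t\le Y_t\}\wedge T$, so that on $[t_0,\tau]$ one has $y^n\ge Y$ and hence $-\int_{t_0}^\tau n(Y_s-y^n_s)\,ds\ge0$: the drift has the favorable sign exactly on the region where $y^n$ exceeds $Y$. Combining the $\rho$-martingale property of $y^n_t+zB_t+\int_0^t n(Y_s-y^n_s)\,ds$ with the optional stopping theorem (Proposition \ref{opt}), the locality property (\ref{5.1}), the identity $y^n_\tau=Y_\tau$ on the exceptional set, and the $\rho$-supermartingale property of $Y+zB$ evaluated at the stopping time $\tau$ then forces $y^n_{t_0}\le Y_{t_0}$ there, a contradiction. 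The comparison $y^n\le y^{n+1}$ is proved by the same device, using $n(Y_s-y^n_s)\le(n+1)(Y_s-y^{n+1}_s)$ on the region where $y^n\ge y^{n+1}$ together with $y^{n+1}\le Y$ from the previous step. To repair your proof you would need to replace the monotone iteration by an argument of this type; note that it requires evaluating $\rho$ at stopping times via Proposition \ref{opt}, which your proposal never invokes.
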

\begin{proof} \textbf{Step 1.} For $n\geq1$ and $y_t\in{\mathcal{S}}^\infty$, we set
\begin{equation*}
\phi(y_t):=\rho_t\left(-Y_T-z\cdot B_T-\int_0^Tn(Y_s-y_s)ds\right)-z\cdot B_t-\int_0^tn(Y_s-y_s)ds,\quad t\in[0,T].
\end{equation*}
By Proposition \ref{rmp2}(iii), for any $y_t, \tilde{y}_t\in{\mathcal{S}}^\infty$, we have $\phi(y_t), \phi(\tilde{y}_t)\in{\cal{S}}^\infty$ and by Proposition \ref{rmp1}(iii),
\begin{equation*}
\|\phi(y_t)-\phi(\tilde{y}_t)\|_{\mathcal{S}^\infty}\leq 2nT\|y_t-\tilde{y}_t\|_{\mathcal{S}^\infty}.
\end{equation*}
This implies that if $T\leq\frac{1}{4n}$, then $\phi(\cdot)$ is a strict contraction on $\mathcal{S}^\infty$. Thus, when $T\leq\frac{1}{4n}$, by Proposition \ref{rmp1}(i), we get that the equation (\ref{5.2}) admits a unique solution $y_t^n\in{\cal{S}}^\infty$.

When $T>\frac{1}{4n}$, we set $0=T_1<T_2<\cdots<T_m=T$ such that for any $1<k\leq m$, $T_k-T_{k-1}\leq\frac{1}{4n}$. Since for any $1<k\leq m$, the equation (\ref{5.2}) has a unique bounded solution on $[T_{k-1},T_k]$, using the ``patching-up" method in the proof of \cite[Proposition 4.4]{HM}, we conclude that the equation (\ref{5.2}) admits a unique solution $y_t^n\in{\cal{S}}^\infty$.

\textbf{Step 2.} For $n\geq1$, suppose that there exists ${t_0}\in[0,T)$ such that $P(y_{t_0}^n>Y_{t_0})>0$, which implies that there exists $k\geq1$ such that
\begin{equation*}\label{5.3}
P\left(y_{t_0}^n>Y_{t_0}+\frac{1}{k}\right)>0.\tag{5.3}
\end{equation*}
Set $A_1:=\{y_{t_0}^n>Y_{t_0}+\frac{1}{k}\}$ and
$\tau:=\{t\geq {t_0}:y_t^n\leq Y_t\}\wedge T$. Since $y_T^n=Y_T$, we have
\begin{equation*}\label{5.4}
1_{A_1}y_\tau^n=1_{A_1}Y_\tau\quad \text{and}\quad 1_{A_1}y_t^n\geq1_{A_1}Y_t,\ t\in[{t_0},\tau].\tag{5.4}
\end{equation*}
From Proposition \ref{rmp1}(i) and (\ref{5.2}), it follows that
\begin{equation*}\label{5.5}
y_t^n+z\cdot B_t+\int_0^tn(Y_s-y^n_s)ds=\rho_t\left(-Y_T-z\cdot B_T-\int_0^Tn(Y_s-y_s^n)ds\right),\quad\forall t\in[0,T],\tag{5.5}
\end{equation*}
which implies that $y_t^n+z\cdot B_t+\int_0^tn(Y_s-y^n_s)ds$ is a $\rho$-martingale. Then by Proposition \ref{opt}, Proposition \ref{rmp1}(i), Proposition \ref{rmp2}(ii), (\hyperref[r1]{r1}) and (\ref{5.4}), we have
\begin{align*}
1_{A_1}y_{t_0}^n&=1_{A_1}\rho_{t_0}\left(-y_\tau^n-z\cdot (B_\tau-B_{t_0})-\int_{t_0}^\tau n(Y_s-y_s^n)ds\right)\\
&=1_{A_1}\rho_{t_0}\left(-1_{A_1}y_\tau^n-1_{A_1}z\cdot (B_\tau-B_{t_0})-\int_{t_0}^\tau n(1_{A_1}Y_s-1_{A_1}y_s^n)ds\right)\\
&\leq1_{A_1}\rho_{t_0}\left(-1_{A_1}Y_\tau-1_{A_1}z\cdot (B_\tau-B_{t_0})\right)\\
&=1_{A_1}\rho_{t_0}\left(-Y_\tau-z\cdot B_\tau\right)-1_{A_1}z\cdot B_{t_0}\\
&\leq1_{A_1}Y_{t_0},
\end{align*}
which contradicts (\ref{5.3}). Thus, for all $n\geq1$ and $t\in[0,T]$, $y_t^n\leq Y_t.$

\textbf{Step 3.} For $n\geq1$, suppose that there exists ${t_0}\in[0,T)$ such that $P(y_{t_0}^n>y_{t_0}^{n+1})>0$, which implies that there exists $k\geq1$ such that
\begin{equation*}\label{5.6}
P\left(y_{t_0}^n>y_{t_0}^{n+1}+\frac{1}{k}\right)>0.\tag{5.6}
\end{equation*}
Set $A_2:=\{y_{t_0}^n>y_{t_0}^{n+1}+\frac{1}{k}\}$ and
$\sigma:=\{t\geq {t_0}:y_t^n\leq y_t^{n+1}\}\wedge T$. We then have
\begin{equation*}\label{5.7}
1_{A_2}y_\sigma^n=1_{A_2}y_\sigma^{n+1}\quad \text{and}\quad 1_{A_2}y_t^n\geq1_{A_2}y_t^{n+1},\ t\in[{t_0},\sigma].\tag{5.7}
\end{equation*}
By Proposition \ref{opt}, Proposition \ref{rmp1}(i), Proposition \ref{rmp2}(ii), (\ref{5.7}), the consequence of Step 2, (\hyperref[r1]{r1}), and (\ref{5.5}), we have
\begin{align*}
1_{A_2}y_{t_0}^n&=1_{A_2}\rho_{t_0}\left(-y_\sigma^n-z\cdot (B_\sigma-B_{t_0})-\int_{t_0}^\sigma n(Y_s-y_s^n)ds\right)\\
&=1_{A_2}\rho_{t_0}\left(-1_{A_2}y_\sigma^n-1_{A_2}z\cdot (B_\sigma-B_{t_0})-\int_{t_0}^\sigma n(1_{A_2}Y_s-1_{A_2}y_s^n)ds\right)\\
&\leq1_{A_2}\rho_{t_0}\left(-1_{A_2}y_\sigma^{n+1}-1_{A_2}z\cdot (B_\sigma-B_{t_0})-\int_{t_0}^\sigma(n+1)(1_{A_2}Y_s-1_{A_2}y_s^{n+1})ds\right)\\
&=1_{A_2}\rho_{t_0}\left(-y_\sigma^{n+1}-z\cdot (B_\sigma-B_{t_0})-\int_{t_0}^\sigma(n+1)(Y_s-y_s^{n+1})ds\right)\\
&=1_{A_2}y_{t_0}^{n+1},
\end{align*}
which contradicts (\ref{5.6}). Thus, for all $n\geq1$ and $t\in[0,T]$, $y_t^n\leq y_t^{n+1}.$ This with Step 2 implies (i).

\textbf{Step 4.} By (\ref{5.5}), Proposition \ref{rmp1}(i), the consequence of Step 2, and (\hyperref[r1]{r1}), we have
\begin{align*}
 y_s^n+z\cdot B_s&=\rho_s\left(-y_t^n-z\cdot B_t-\int_s^tn(Y_r-y_r^n)dr\right)\geq\rho_s(-y_t^n-z\cdot B_t),\quad \forall 0\leq s\leq t\leq T,
\end{align*}
which implies that $y_t^n+z\cdot B_t$ is a $\rho$-supermartingale. Thus, (ii) holds.
\end{proof}

We have the following decomposition theorem for $\rho$-supermartingales.
\begin{theorem} \label{dm} Let $\rho$ be a convex DRM satisfying (\hyperref[A]{A}). Let $Y_t\in\mathcal{S}^{\infty}$, $z\in \mathbf{R}^d$, and $Y_t+z\cdot B_t$ be a $\rho$-supermartingale. Then there exists a process $A_t\in {\cal{A}}$ and an increasing sequence $\{\tau_n\}_{n\geq1}\subset{\cal{T}}_{0,T}$ satisfying $\tau_n\nearrow T$ as $n\rightarrow\infty$, such that for all $n\geq1$, $Y_{\tau_n\wedge t}+z\cdot B_{\tau_n\wedge t}+A_{\tau_n\wedge t}$ is a $\rho$-martingale. Moreover, if $A_T\in L^\infty({\cal{F}}_T)$, then $Y_t+z\cdot B_t+A_t$ is a $\rho$-martingale.
\end{theorem}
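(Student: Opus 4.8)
The goal is a nonlinear Doob--Meyer decomposition for the $\rho$-supermartingale $Y_t+zB_t$ with $Y_t\in\mathcal S^\infty$. I would follow the classical Peng-type strategy (as in \cite[Theorem 3.3]{Peng99} and \cite[Theorem 7.4]{Peng04}), but using the penalization family $y^n_t$ constructed in Proposition \ref{pr5.3} in place of the usual $L^2$-penalization. The family $\{y^n_t\}$ is increasing in $n$, bounded above by $Y_t$, lives in $\mathcal S^\infty$, and $y^n_t+zB_t$ is a $\rho$-supermartingale; moreover $y^n_t+zB_t+\int_0^t n(Y_s-y^n_s)\,ds$ is a $\rho$-martingale. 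Set $A^n_t:=\int_0^t n(Y_s-y^n_s)\,ds$, so $A^n\in\mathcal A$ and $t\mapsto A^n_t$ is absolutely continuous and increasing.

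\medskip
\noindent\textbf{Key steps.} First I would show $y^n_t\uparrow Y_t$ pointwise: the limit $\bar y_t:=\lim_n y^n_t\le Y_t$ exists by monotonicity, and using the $\rho$-supermartingale property of $Y_t+zB_t$ together with Proposition \ref{opt} one argues, as in \cite{Peng99,Peng04}, that $\bar y_t=Y_t$; if not, the penalization term $n(Y_s-\bar y_s)$ would blow up and contradict the boundedness of $y^n$ in $\mathcal S^\infty$ (which follows from Proposition \ref{rmp3}, since the bound there is uniform in the drift perturbation). Next I would obtain a uniform a priori estimate on $A^n_T=\int_0^T n(Y_s-y^n_s)\,ds$ on a localizing sequence. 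Define $\tau_n$ as the first time $A^n$ (or the quadratic variation / BMO-norm of the associated $Z^n$ from Proposition \ref{rp}) exceeds level $n$, capped at $T$; by the convergence $y^n\to Y$ and continuity of paths, $\tau_n\nearrow T$ a.s. On $[0,\tau_n]$ one has $E[A^n_{\tau_n}]$ bounded (using the representation \eqref{4.6} from Proposition \ref{rp} applied to $y^n_t+zB_t$, comparing bounded-variation parts, and the uniform $\mathcal S^\infty$ bound on $y^n$), so $A^n_{\tau_n\wedge\cdot}$ is relatively weakly compact in, say, $L^1$; passing to a subsequence, $A^n_{\tau_n\wedge t}\to A_{\tau_n\wedge t}$ with $A\in\mathcal A$ (increasing, RCLL, adapted, $A_0=0$), the limit being consistent across the nested stopping times. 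Then I would pass to the limit in the $\rho$-martingale identity
\[
y^n_s+zB_s+A^n_s=\rho_s\!\left(-y^n_{\tau_n\wedge t}-zB_{\tau_n\wedge t}-(A^n_{\tau_n\wedge t}-A^n_s)\right),\qquad 0\le s\le t,
\]
using the convergence property of $\rho$ in Proposition \ref{rmp2}(i) (together with translation invariance, Proposition \ref{rmp1}(i), to pull the increasing-process increment out as in the $A^n$ term) and monotonicity, to get that $Y_{\tau_n\wedge t}+zB_{\tau_n\wedge t}+A_{\tau_n\wedge t}$ is a $\rho$-martingale for each $n$.

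\medskip
\noindent\textbf{The last assertion.} If in addition $A_T\in L^\infty(\mathcal F_T)$, then $Y_t+zB_t+A_t\in\mathcal S^\infty$ (as $Y\in\mathcal S^\infty$ and $A$ is increasing with bounded terminal value), and one removes the localization: by Proposition \ref{opt} applied to the already-established $\rho$-martingales on $[0,\tau_n]$, letting $n\to\infty$ and invoking the convergence in Proposition \ref{rmp2}(i) once more (the relevant terminal variables $-Y_{\tau_n\wedge t}-zB_{\tau_n\wedge t}-(A_{\tau_n\wedge t}-A_s)$ converge and have uniformly bounded exponential moments because $B$ has Gaussian tails and $Y,A$ are bounded), we conclude $\rho_s(-Y_t-zB_t-(A_t-A_s))=Y_s+zB_s+A_s$ for all $s\le t$, i.e. $Y_t+zB_t+A_t$ is a $\rho$-martingale on all of $[0,T]$.

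\medskip
\noindent\textbf{Main obstacle.} The crux is the uniform estimate on $A^n_{\tau_n}$ and the extraction of a limiting increasing process $A\in\mathcal A$ with the right adaptedness and path regularity — in the Lipschitz/$L^2$ setting this rests on $L^2$-estimates that, as the introduction stresses, fail here; the substitute is to localize so that the relevant $Z^n$-processes have controlled BMO norm (Proposition \ref{rp} gives $Z^n\in H^2_d$ and the drift bound $|g^{y^n}_t|\le g^{\mu,\nu}(Z^n_t)$), converting the problem to one where the exponential-moment machinery of Lemma \ref{g}(v) and the convergence Proposition \ref{rmp2}(i) apply. Checking that the weak limit $A$ is genuinely increasing and that the $\rho$-martingale identity survives the limit — which needs the continuity/regularity of the paths of $\rho_\cdot$ from Proposition \ref{rp} and the stability of $\rho$ under the perturbation by $A^n_{\tau_n\wedge t}-A^n_s$ — is where the care is required.
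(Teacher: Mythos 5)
Your overall strategy coincides with the paper's: penalize via Proposition \ref{pr5.3}, set $A^n_t=\int_0^t n(Y_s-y^n_s)\,ds$, extract a limiting increasing process $A$, and pass to the limit in the $\rho$-martingale identity using Proposition \ref{rmp2}(i). However, two of the steps you describe would not go through as stated. First, your localization: you define $\tau_n$ as the first time $A^n$ (or the BMO norm of $Z^n$) exceeds level $n$, i.e.\ the $n$-th stopping time is built from the $n$-th penalization only. But to show that $Y_{\tau\wedge t}+zB_{\tau\wedge t}+A_{\tau\wedge t}$ is a $\rho$-martingale for a \emph{fixed} stopping time $\tau$, you must let the penalization index tend to infinity with $\tau$ held fixed, and you then need a bound on $A^m_{\tau}$ uniform in $m$; a bound on $A^n_{\tau_n}$ alone gives no control over the other terms of the sequence. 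The paper resolves this by first proving $E[\sup_{n\ge1}A^n_T]<\infty$ (which requires passing to a subsequence via Kobylanski's lemma \cite{K} so that $\sup_n|Z^n_t|\in{\cal{H}}^2_d$) and then defining a single localizing family $\sigma_k:=\inf\{t\ge 0: E[\sup_{n\ge1} A^n_T|{\cal{F}}_t]\ge k\}\wedge T$, which yields $\sup_m A^m_{\sigma_k}\le k$ simultaneously for all $m$. Second, your extraction of $A$ by ``relative weak compactness in $L^1$'' is not justified: boundedness in $L^1$ does not give weak $L^1$ compactness, and a weak limit would not obviously be an adapted increasing process. The paper instead obtains \emph{strong} $L^1$ convergence of $A^n_t$, by showing $\{Z^n_t\}$ is Cauchy in ${\cal{H}}^2_d$ (via an It\^o argument on $|y^n_t-y^m_t|^2$ resting on uniform BMO estimates for $Z^n$, themselves obtained from It\^o's formula applied to $e^{\beta y^n_t}$) and $\{g^n_t\}$ is Cauchy in ${\cal{H}}^1_1$; this last step is exactly where the $\theta$-domination inequality (\ref{5.14}) from Proposition \ref{rp} is essential, since a quadratic bound on $|g^n_t|$ alone does not yield a Cauchy estimate.

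A further gap: for the limit passage via Proposition \ref{rmp2}(i) you need the terminal variables $y^m_{\sigma_k}$ to converge to $Y_{\sigma_k}$, i.e.\ convergence at a stopping time; the $dt\times dP$-a.e.\ monotone convergence you obtain from the blow-up argument is not sufficient for this. The paper upgrades it to $\sup_{t\in[0,T]}|y^n_t-Y_t|\to0$ by applying the exponential transformation $-\exp(-2\nu(y^n_t+zB_t))$, invoking Peng's monotonic limit theorem to show that the monotone limit is RCLL, identifying it with $Y_t$, and then using Dini's theorem. These three points --- a single localizing family controlling all the $A^m$ at once, strong $L^1$ convergence of the compensators via the $\theta$-domination, and uniform convergence of $y^n$ --- are precisely where the quadratic, non-Lipschitz setting forces new arguments, and your proposal leaves them unresolved.
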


\begin{proof} By Proposition \ref{pr5.3} and Proposition \ref{rmp1}(i), the following equation:
\begin{equation*}\label{5.8}
y_t^n+z\cdot B_t+\int_0^tn(Y_s-y_s^n)ds=\rho_t\left(-Y_T-z\cdot B_T-\int_0^Tn(Y_s-y_s^n)ds\right),\quad t\in[0,T],\tag{5.8}
\end{equation*}
admits a unique solution $y_t^n\in{\cal{S}}^\infty$ such that for all $n\geq1$ and $t\in[0,T]$, $y_t^n\leq y_t^{n+1}\leq Y_t.$ Thus, there exists a constant $C>0$ such that $dt\times dP$-$a.e.$,
\begin{equation*}\label{5.9}
\sup_{n\geq1}|y^n_t|<C.\tag{5.9}
\end{equation*}
For $n\geq1$, set
\begin{equation*}\label{5.10}
A_t^n:=\int_0^tn(Y_s-y_s^n)ds,\quad t\in[0,T].\tag{5.10}
\end{equation*}
This with (\ref{5.8}) implies that for all $t\in[0,T],$
\begin{equation*}\label{5.11}
\rho_t(-Y_T-z\cdot B_T-A_T^n)=y_t^n+z\cdot B_t+A_t^n.\tag{5.11}
\end{equation*}
From Proposition \ref{rp}, it follows that there exists a pair $(g^n_s, Z^n_s)\in H^1_1\times H_d^2$ such that
\begin{equation*}\label{5.12}
y_t^n+z\cdot B_t=Y_T+z\cdot B_T+A_T^n-A_t^n+\int_t^Tg^n_sds-\int_t^TZ^n_s\cdot dB_s,\quad t\in[0,T],\tag{5.12}
\end{equation*}
with
\begin{equation*}\label{5.13}
|g^n_t|\leq(\mu+\nu)(1+|Z^n_t|^2),\quad dt\times dP\text{-}a.e.,\tag{5.13}
\end{equation*}
and for $m\geq1$ and $\theta\in(0,1)$,
\begin{equation*}\label{5.14}
\frac{g_t^n-\theta g_t^m}{1-\theta}\leq(\mu+\nu)\left(1+\left|\frac{Z^n_t-\theta Z^m_t}{1-\theta}\right|^2\right),\quad dt\times dP\text{-}a.e.\tag{5.14}
\end{equation*}
Then we have
\begin{prop}\label{pro5.4}
\begin{equation*}\label{5.15}
\lim_{n\rightarrow\infty}E\left[\sup_{t\in[0,T]}|y^n_t-Y_t|\right]\rightarrow0,\tag{5.15}
\end{equation*}
\begin{equation*}\label{5.16}
\lim_{n,m\rightarrow\infty}E\left[\int_0^T|Z^n_t-Z^m_t|^2dt\right]\rightarrow0,\quad and \quad \lim_{n,m\rightarrow\infty}E\left[\int_0^T|g^n_t-g^m_t|dt\right]\rightarrow0.\tag{5.16}
\end{equation*}
\end{prop}

\begin{proof}
For $n\geq1$, $k>0$ and $\delta\in\mathcal{T}_{0,T},$ we set
\begin{equation*}
{\tau^k_\delta}:=\inf\left\{t\geq\delta:\int_\delta^t|Z^n_s-z|^2ds\geq k\right\}\wedge T.
\end{equation*}
For a constant $\beta<0$, applying It\^{o}'s formula to $e^{\beta y^n_t}$ and by (\ref{5.13}), we have
\begin{align*}\label{5.17}
e^{\beta y^n_\delta}&+\frac{1}{2}\beta^2\int_\delta^{\tau^k_\delta}e^{\beta y^n_s}|Z^n_s-z|^2ds-\beta\int_\delta^{\tau^k_\delta}e^{\beta y^n_s}dA_s\notag\\&\leq e^{\beta Y_{\tau^k_\delta}}+\int_\delta^{\tau^k_\delta}|\beta| e^{\beta y^n_s}|g^n_s|ds-\int_\delta^{\tau^k_\delta}\beta e^{\beta y^n_s}(Z^n_s-z)\cdot dB_s\tag{5.17}\\
&\leq e^{\beta Y_{\tau^k_\delta}}+(\mu+\nu)(1+2|z|^2)|\beta|\int_\delta^{\tau^k_\delta}e^{\beta y^n_s}ds+2(\mu+\nu)|\beta|\int_\delta^{\tau^k_\delta}e^{\beta y^n_s}|Z^n_s-z|^2ds\\
&\ \ \ \ \ \ -\int_\delta^{\tau^k_\delta}\beta e^{\beta y^n_s}(Z^n_s-z)\cdot dB_s,
\end{align*}
which together with (\ref{5.9}) implies
\begin{align*}\label{5.18}
\left(\frac{1}{2}\beta^2-2(\mu+\nu)|\beta|\right)&E\left[\int_\delta^{\tau^k_\delta}e^{\beta y^n_s}|Z^n_s-z|^2ds|{\cal{F}}_\delta\right]\leq e^{|\beta|C}(1+(\mu+\nu)(1+2|z|^2)|\beta|T).\tag{5.18}
\end{align*}
Choose $\beta<-4(\mu+\nu)$ such that $\frac{1}{2}\beta^2-2(\mu+\nu)|\beta|=1$. By (\ref{5.9}) and Fatou's lemma, when $k$ tends to $\infty$, we have
\begin{equation*}\label{5.19}
E\left[\int_\delta^T|Z^n_s-z|^2ds|{\cal{F}}_\delta\right]\leq e^{2|\beta|C}(1+(\mu+\nu)(1+2|z|^2)|\beta|T).\tag{5.19}
\end{equation*}
From this and the energy inequality for BMO martingales (see \cite[Page 29]{Ka}), we conclude that for all $m\geq1$,
\begin{equation*}\label{5.20}
\sup_{n\geq1}E\left[\left(\int_0^T|Z_s^n|^2ds\right)^m\right]\leq \sup_{n\geq1}m!\|Z^n_s\|^{2m}_{BMO}<\infty.\tag{5.20}
\end{equation*}

By (\ref{5.12}), (\ref{5.9}) and (\ref{5.13}), we have
\begin{align*}
|A_T^n|^2&\leq\left(2C+\int_0^T|g_s^n|ds+\left|\int_0^T(Z^n_s-z)\cdot dB_s\right|\right)^2\\
&\leq3\left(4C^2+2(\mu+\nu)^2T^2+2(\mu+\nu)^2\left(\int_0^T|Z_s^n|^2ds\right)^2
+\left|\int_0^T(Z^n_s-z)\cdot dB_s\right|^2\right).
\end{align*}
Taking expectation and then by BDG inequality and (\ref{5.20}), we have
\begin{equation*}\label{5.21}
\sup_{n\geq1}E|A_T^n|^2<\infty.\tag{5.21}
\end{equation*}
Then, by (\ref{5.9}), dominated convergence theorem, (\ref{5.10}) and (\ref{5.21}), we have
\begin{equation*}
E\int_0^T\lim_{n\rightarrow\infty}|Y_s-y_s^n|ds= \lim_{n\rightarrow\infty}E\int_0^T|Y_s-y_s^n|ds
\leq\lim_{n\rightarrow\infty}\frac{1}{n}\sup_{n\geq1}E\left[|A_T^n|\right]=0,
\end{equation*}
which implies that
\begin{equation*}\label{5.22}
y^n_t\nearrow Y_t,\ dt\times dP\text{-}a.e.,\quad \text{as}\quad n\rightarrow\infty.\tag{5.22}
\end{equation*}

By Proposition \ref{pr5.3}(ii) and (\hyperref[A]{A}), we have for all $0\leq s\leq t\leq T,$
\begin{align*}
 -y_s^n-z\cdot B_s\leq-\rho_s(-y_t^n-z\cdot B_t)\leq-\mathcal{E}^{-\mu,-\nu}[y_t^n+z\cdot B_t|\mathcal{F}_s]=\mathcal{E}^{\mu,\nu}[-y_t^n-z\cdot B_t|\mathcal{F}_s],
\end{align*}
which implies that $-y_t^n-z\cdot B_t$ is a $g^{\mu,\nu}$-submartingale. It follows from Lemma \ref{dmg} that there exists a pair $(\tilde{Z}^n_s,\tilde{A}_s)\in H^2_d\times\mathcal{A},$ such that
\begin{equation*}\label{5.23}
y_t^n+z\cdot B_t=Y_T+z\cdot B_T+\tilde{A}_T^n-\tilde{A}_t^n+\int_t^Tg^{-\mu,-\nu}(-\tilde{Z}_s^n)ds-\int_t^T-\tilde{Z}_s^n\cdot dB_s,\quad t\in[0,T],\tag{5.23}
\end{equation*}
Thus, $\tilde{A}_t^n$ is continuous. Comparing (\ref{5.23}) with (\ref{5.12}), we have $dt\times dP$-$a.e.$, $-\tilde{Z}_s^n=Z_s^n$. Applying It\^{o}'s formula to $-\exp(-2\nu(y_t^n+z\cdot B_t))$, we have
\begin{equation*}\label{5.24}
\tilde{y}_t^n=-\exp(-2\nu(Y_T+z\cdot B_T))+\tilde{a}_T^n-\tilde{a}_t^n+\int_t^T-\mu|\tilde{z}_s^n|ds
-\int_t^T\tilde{z}_s^n\cdot dB_s,\quad t\in[0,T],\tag{5.24}
\end{equation*}
with $\tilde{y}_t^n=-\exp(-2\nu(y_t^n+z\cdot B_t)),\ \tilde{z}_t^n=2\nu\exp(-2\nu(y_t^n+z\cdot B_t))Z_s^n$ and $ \tilde{a}_t^n=2\nu\int_0^t\exp(-2\nu(y_s^n+z\cdot B_s))d\tilde{A}^n_s$. Then by (\ref{5.9}), we have
\begin{align*}\label{5.25}
\sup_{n\geq1}E\left[\sup_{t\in[0,T]}|\tilde{y}_t^n|^2\right]
&=\sup_{n\geq1}E\left[\sup_{t\in[0,T]}\exp(-4\nu(y_t^n+z\cdot B_t))\right]\\
&\leq\exp(4\nu C)E\left[\sup_{t\in[0,T]}\exp(-4\nu z\cdot B_t))\right]<\infty.\tag{5.25}
\end{align*}
From this and Bouchard et al. \cite[Theorem 2.1]{Bou}, we further have
\begin{align*}\label{5.26}
\sup_{n\geq1}E\left(\int_0^T|\tilde{z}_s^n|^2ds+|\tilde{a}_T^n|^2\right)<\infty.\tag{5.26}
\end{align*}
By (\ref{5.9}), there exists a bounded progressively measurable process $\tilde{Y}_t$ such that for all $t\in[0,T]$, $y^n_t\nearrow \tilde{Y}_t$, as $n\rightarrow\infty$. This implies that for all $t\in[0,T]$, $\tilde{y}_t^n\nearrow -\exp(-2\nu(\tilde{Y}_t+z\cdot B_t))$, as $n\rightarrow\infty$. Then, in view of (\ref{5.24})-(\ref{5.26}), by \cite[Theorem 2.4]{Peng99}, we deduce that $-\exp(-2\nu(\tilde{Y}_t+z\cdot B_t))$ is RCLL and thus $\tilde{Y}_t$ is RCLL. This with (\ref{5.22}) implies that for all $t\in[0,T]$, $\tilde{Y}_t=Y_t$. Then by Dini's theorem, we have
\begin{equation*}
\lim_{n\rightarrow\infty}\sup_{t\in[0,T]}|y^n_t-Y_t|\rightarrow0.
\end{equation*}
In view of this and (\ref{5.9}), by dominated convergence theorem, we obtain (\ref{5.15}).

Applying It\^{o}'s formula to $|y^n_t-y^m_t|^2$ and (\ref{5.13}), we have
\begin{align*}
E&|y^n_0-y^m_0|^2+E\int_0^T|Z^n_s-Z^m_s|^2ds\\&=2E\int_0^T(y^n_s-y^m_s)(g^n_s-g^m_s)ds
+2E\int_0^T(y^n_s-y^m_s)d(A^n_s-A^m_s)\\
&\leq2(\mu+\nu+1)E\left[\sup_{t\in[0,T]}|y^n_t-y^m_t|\left(\int_0^T(2+|Z^n_s|^2+|Z^m_s|^2)ds
+(|A^n_T|+|A^m_T|)\right)\right]\\
&\leq2(\mu+\nu+1)\sqrt{E\left[\sup_{t\in[0,T]}|y^n_t-y^m_t|^2\right]}
\sqrt{2E\left[\left(\int_0^T(2+|Z^n_s|^2+|Z^m_s|^2)ds\right)^2\right]
+2E|A^n_T+A^m_T|^2},
\end{align*}
which with (\ref{5.15}), (\ref{5.20}) and (\ref{5.21}) implies that $\{Z^n_t\}_{n\geq1}$ is a Cauchy sequence in ${\cal{H}}^2_d.$

By (\ref{5.14}) and (\ref{5.13}), we have
\begin{align*}
g_t^n-g_t^m&=g_t^n-\theta g_t^m-(1-\theta)g_t^m\\
&\leq(\mu+\nu)(1-\theta)\left(1+\left|Z^n_t+\frac{\theta}{1-\theta}(Z^n_t-Z^m_t)\right|^2\right)+(1-\theta)|g_t^m|\\
&\leq(\mu+\nu)(1-\theta)\left(1+2|Z^n_t|^2+2\left(\frac{\theta}{1-\theta}\right)^2|Z^n_t-Z^m_t|^2\right)
+\mu(1-\theta)(1+|Z^m_t|^2)\\
&\leq2(\mu+\nu)(1-\theta)\left(1+|Z^n_t|^2+|Z^m_t|^2\right)
+\frac{2(\mu+\nu)\theta^2}{1-\theta}|Z^n_t-Z^m_t|^2.
\end{align*}
By interchanging $g_t^n$ and $g_t^m$, we deduce that
\begin{align*}
E\int_0^T|g_t^n-g_t^m|dt\leq4(\mu+\nu)(1-\theta)\sup_{n\geq1}E\int_0^T\left(1+|Z^n_t|^2\right)dt
+\frac{2(\mu+\nu)\theta^2}{1-\theta}E\int_0^T|Z^n_t-Z^m_t|^2dt.
\end{align*}
Since (\ref{5.20}) holds and $\{Z^n_t\}_{n\geq1}$ is a Cauchy sequence in ${\cal{H}}^2_d$, letting $n, m\rightarrow\infty$, and then $\theta\rightarrow1$, we get that $\{g^n_t\}_{n\geq1}$ is a Cauchy sequence in ${\cal{H}}^1_1.$ We obtain (\ref{5.16}).
\end{proof}

By (\ref{5.12}) and Proposition \ref{pro5.4}, we deduce that for all $t\in[0,T]$, $\{A^n_t\}_{n\geq1}$ is a Cauchy sequence in $L^1({\cal{F}}_t).$ We denote the limit of $\{A^n_t\}_{n\geq1}$ in $L^1({\cal{F}}_t)$ by $A_t$. Then we deduce that $A_t$ is an increasing and continuous process with $A_0=0$.

Since $\{Z^n_t\}_{n\geq1}$ is a Cauchy sequence in ${\cal{H}}^2_d$ and $\{\int_0^TZ^n_t\cdot dB_t\}_{n\geq1}$ is a Cauchy sequence in $L^2({\cal{F}}_T)$, by Kobylanski \cite[Lemma 2.5]{K}, we can find a subsequence of $\{Z^n_t\}_{n\geq1}$ still denoted by $\{Z^n_t\}_{n\geq1}$ such that
$\sup_{n\geq1}|Z_t^n|\in{\cal{H}}^2_d$ and $\sup_{n\geq1}\left|\int_0^TZ^n_t\cdot dB_t\right|\in L^2({\cal{F}}_T)$. From this, (\ref{5.12}), (\ref{5.9}) and (\ref{5.13}), we have
\begin{align*}
E[\sup_{n\geq1}A_T^n]&\leq2C+E\left[\int_0^T\sup_{n\geq1}|g_s^n|ds\right]
+E\left[\sup_{n\geq1}\left|\int_0^TZ^n_s\cdot dB_s\right|\right]\\
&\leq2C+(\mu+\nu)E\left[\int_0^T\sup_{n\geq1}(1+|Z^n_s|^2)ds\right]
+E\left[\sup_{n\geq1}\left|\int_0^TZ^n_s\cdot dB_s\right|\right]<\infty.
\end{align*}
Set
\begin{equation*}
  \sigma_k:=\inf\left\{t\geq0,E\left[\sup_{n\geq 1}A_T^n|{\cal{F}}_t\right]\geq k\right\}\wedge T, \ \ k>0.
\end{equation*}
By (\ref{5.11}) and Proposition \ref{opt}, we get that for all $t\in[0,T]$ and $k>0,$
\begin{equation*}
\rho_t(-y^n_{\sigma_k}-z\cdot B_{\sigma_k}-A^n_{\sigma_k})=y^n_{{\sigma_k}\wedge t}+z\cdot B_{{\sigma_k}\wedge t}+A^n_{{\sigma_k}\wedge t}.
\end{equation*}
Since $A^n_t$ is increasing, we have $\sup_{n\geq 1}A^n_{\sigma_k}\leq E\left[\sup_{n\geq 1}A_T^n|{\cal{F}}_{\sigma_k}\right]\leq k$. Hence, by (\ref{5.9}), (\ref{5.15}) and Proposition \ref{rmp2}(i), we get that for all $t\in[0,T]$ and $k>0,$
\begin{equation*}
\rho_t(-Y_{\sigma_k}-z\cdot B_{\sigma_k}-A_{\sigma_k})=Y_{{\sigma_k}\wedge t}+z\cdot B_{{\sigma_k}\wedge t}+A_{{\sigma_k}\wedge t}.
\end{equation*}
Moreover, if $A_T$ is bounded, then letting $k\rightarrow\infty$, by Proposition \ref{rmp2}(i) again, we get that for all $t\in[0,T],$
\begin{equation*}
\rho_t(-Y_T-z\cdot B_T-A_T)=Y_t+z\cdot B_t+A_t.
\end{equation*}
The proof is complete.
\end{proof}
\section{The proofs of Theorem \ref{rmt} and Corollary \ref{rmt2}}\label{sec6}
\begin{proof}[\emph{\textbf{The proof of Theorem \ref{rmt}:}}]

\textbf{Step 1.} For $z\in \textbf{R}^d$, we have
\begin{equation*}\label{6.1}
-g^{\mu,\nu}(z)t+z\cdot B_t=-g^{\mu,\nu}(z)T+z\cdot B_T+\int_t^Tg^{\mu,\nu}(z)ds-\int_t^Tz\cdot dB_s,\quad t\in[0,T],\tag{6.1}
\end{equation*}
which implies that $-g^{\mu,\nu}(z)t+z\cdot B_t$ is a $g^{\mu,\nu}$-martingale. From (\hyperref[A]{A}), it follows that $-g^{\mu,\nu}(z)t+z\cdot B_t$ is a $\rho$-supermartingale. Then by Theorem \ref{dm}, there exists a process $A_t^z\in {\cal{A}}$ and an increasing series $\{\tau_n\}_{n\geq1}\subset\mathcal{T}_{0,T}$ satisfying $\tau_n\nearrow T$ as $n\rightarrow\infty$, such that for all $n\geq1$ and $t\in[0,T],$
\begin{equation*}
\rho_t(g^{\mu,\nu}(z)\tau_n-z\cdot B_{\tau_n}-A_{ \tau_n}^z)=-g^{\mu,\nu}(z)({t\wedge\tau_n})+z B_{t\wedge\tau_n}+A_{t\wedge\tau_n}^z.
\end{equation*}
From Proposition \ref{rp}, it follows that for all $n\geq1$, there exists a pair $(g_n(s,z), Z^{z,n}_s)\in H_1^1\times H_d^2$ such that
\begin{equation*}\label{6.2}
-g^{\mu,\nu}(z)({t\wedge\tau_n})+z\cdot B_{t\wedge\tau_n}+A_{t\wedge\tau_n}^z
=-g^{\mu,\nu}(z){\tau_n}+z\cdot B_{\tau_n}+A_{\tau_n}^z+\int_{t\wedge\tau_n}^{\tau_n}g_n(s,z)ds-\int_{t\wedge\tau_n}^{\tau_n}Z^{z,n}_s\cdot dB_s.\tag{6.2}
\end{equation*}
Comparing (\ref{6.1}) and (\ref{6.2}), we have for all $n\geq1$, $dt\times dP$-$a.e.,$
\begin{equation*}\label{6.3}
Z^{z,n}_t=z\quad \text{and} \quad |g_n(t,z)|\leq\mu|z|+\nu|z|^2,\quad t\in[0,\tau_n],\tag{6.3}
\end{equation*}
and thus,
\begin{equation*}
A_{\tau_n}^z=\int_0^{\tau_n}g^{\mu,\nu}(z)ds-\int_0^{\tau_n} g_n(s,z)ds\leq2T(\mu|z|+\nu|z|^2)<\infty,
\end{equation*}
which implies that $A_T^z$ is bounded. Then by Theorem \ref{dm} again, we get that for all $t\in[0,T],$
\begin{equation*}\label{6.4}
\rho_t(g^{\mu,\nu}(z)T-z\cdot B_T-A_T^z)=-g^{\mu,\nu}(z)t+z\cdot B_t+A_t^z.\tag{6.4}
\end{equation*}
By Proposition \ref{rp} again and a similar argument as in (\ref{6.3}), we deduce that there exists a process $g(t,z)\in H_1^1$ such that
\begin{equation*}\label{6.5}
-g^{\mu,\nu}(z)t+z\cdot B_t+A_t^z
=-g^{\mu,\nu}(z)T+z\cdot B_T+A_T^z+\int_t^Tg(s,z)ds-\int_t^Tz\cdot dB_s,\tag{6.5}
\end{equation*}
where $dt\times dP$-$a.e.,$ $|g(t,z)|\leq\mu|z|+\nu|z|^2$ and for all $\tilde{z}\in\textbf{R}^d$ and $\theta\in(0,1)$,
\begin{equation*}
g(t,z)-\theta g(t,\tilde{z})\leq\mu|z-\theta\tilde{z}|
+\frac{\nu}{1-\theta}|z-\theta\tilde{z}|^2.
\end{equation*}
This implies that $g\in{\cal{G}}^{\mu,\nu}\cap{\cal{G}}^{\mu,\nu}_{\theta}$. Moreover, by (\ref{6.5}), Proposition \ref{rmp1}(i), (\ref{6.4}) and Proposition \ref{opt}, we get that for all $z\in\mathbf{R}^d$, $\tau\in\mathcal{T}_{0,T}$ and $0\leq r\leq t\leq T$,
\begin{align*}
&\rho_{\tau\wedge r}\left(\int_{\tau\wedge r}^{\tau\wedge t} g(s,z)ds-\int_{\tau\wedge r}^{\tau\wedge t} z\cdot dB_s\right)\\
&=\rho_{\tau\wedge r}(g^{\mu,\nu}(z)({\tau\wedge t})-z\cdot B_{\tau\wedge t}^z-A_{\tau\wedge t}^z-(g^{\mu,\nu}(z)({\tau\wedge r})-z\cdot B_{\tau\wedge r}^z-A_{\tau\wedge r}^z))\\
&=\rho_{\tau\wedge r}(g^{\mu,\nu}(z)({\tau\wedge t})-z\cdot B_{\tau\wedge t}^z-A_{\tau\wedge t}^z)+(g^{\mu,\nu}(z)({\tau\wedge r})-z\cdot B_{\tau\wedge r}^z-A_{\tau\wedge r}^z)\\
&=0.
\end{align*}

\textbf{Step 2.} In view of the consequence of Step 1 and Proposition \ref{rmp2}(ii), by similar arguments as in \cite[(4.8)]{Zheng24}, for any $\tau\in\mathcal{T}_{0,T}$, $r\in[0,T]$ and ${\cal{F}}_{\tau\wedge r}$-measurable simple random variable $\eta$ (i.e., $\eta=\sum^{m}_{j=1}z_{j}1_{A_j}$ where for all $1\leq j\leq m$, $z_{j}\in{\bf{R}}^d$ and $\{A_{j}\}_{j=1}^{m}$ is a partition of $\Omega$ such that for all $1\leq j\leq m$, $A_{j}\in{\cal{F}}_{\tau\wedge r}$), we have
\begin{align*}\label{6.6}
\rho_{\tau\wedge r}\left(\int_{\tau\wedge r}^{\tau\wedge t} g(s,\eta)ds-\int_{\tau\wedge r}^{\tau\wedge t} \eta\cdot dB_s\right)=0,\quad \forall t\geq r.\tag{6.6}
\end{align*}
Let ${\cal{O}}$ be the set of all the simple step processes defined as
\begin{equation*}
\eta_t=\sum^l_{i=0}\sum^{m_i}_{j=1}z_{ij}1_{[t_i,t_{i+1})\times A_{ij}}(t,\omega),\ \ (t,\omega)\in[0,T]\times\Omega,
\end{equation*}
where $0=t_0<t_1<\cdots<t_l<t_{l+1}=T$, $z_{ij}\in{\bf{R}}^d$ and $\{A_{ij}\}_{j=1}^{m_i}$ is a partition of $\Omega$ such that for all $0\leq i\leq l$ and $0\leq j\leq m_i$, $A_{ij}\in{\cal{F}}_{t_i}$. For any $\eta_s\in{\cal{O}}$, $\tau\in\mathcal{T}_{0,T}$ and $0\leq r\leq T$ with $r\in[t_i,t_{i+1})$ for some $0\leq i\leq l$, by setting $r_i=r$ and $r_k=t_k,$ $i<k\leq l+1$, we have
\begin{align*}\label{6.7}
&\rho_{\tau\wedge r}\left(\int_{\tau\wedge r}^{\tau}g(s,\eta_s)ds-\int_{\tau\wedge r}^{\tau}\eta_s\cdot dB_s\right)\\
&=\rho_{\tau\wedge r}\left(\sum^l_{k=i}\left(\int_{\tau\wedge r_k}^{\tau\wedge r_{k+1}}g(s,\eta_s)ds
-\int_{\tau\wedge r_k}^{\tau\wedge r_{k+1}}\eta_s\cdot dB_s\right)\right)\\
&=\rho_{\tau\wedge r}\left(-\rho_{\tau\wedge r_l}\left(\int_{\tau\wedge r_l}^{\tau}g(s,\eta_{\tau\wedge r_l})ds
-\int_{\tau\wedge r_l}^{\tau}\eta_{\tau\wedge r_l}\cdot dB_s\right)\right.\\
&\ \ \ \ \ \left.+\sum^{l-1}_{k=i}\left(\int_{\tau\wedge r_k}^{\tau\wedge r_{k+1}}g(s,\eta_s)ds
-\int_{\tau\wedge r_k}^{\tau\wedge r_{k+1}}\eta_s\cdot dB_s\right)\right)\quad \quad \text{by\ (\hyperref[r2]{r2})} \text{\ and\ Proposition\ \ref{rmp1}(i)}\\
&=\rho_{\tau\wedge r}\left(\sum^{l-1}_{k=i}\left(\int_{\tau\wedge r_k}^{\tau\wedge r_{k+1}}g(s,\eta_s)ds
-\int_{\tau\wedge r_k}^{\tau\wedge r_{k+1}}\eta_s\cdot dB_s\right)\right)\quad \quad \text{by}\ (\ref{6.6})\\
&\cdots\\
&=\rho_{\tau\wedge r}\left(\int_{\tau\wedge r}^{\tau\wedge r_{i+1}}g(s,\eta_{\tau\wedge r})ds
-\int_{\tau\wedge r}^{\tau\wedge r_{i+1}}\eta_{\tau\wedge r}\cdot dB_s\right)\\
&=0.\quad \quad \text{by}\ (\ref{6.6})\tag{6.7}
\end{align*}

Since $g(t,z)\in{\cal{G}}^{\mu,\nu}\cap{\cal{G}}^{\mu,\nu}_{\theta}$, by \cite[Theorem 2.6(i)]{FHT20}, for $\xi\in\mathcal{L}^{\exp}_T,$ the following BSDE:
\begin{equation*}\label{6.8}
Y_t=-\xi+\int_t^Tg(s,Z_s)ds-\int_t^TZ_s\cdot dB_s,\quad t\in[0,T].\tag{6.8}
\end{equation*}
has a unique solution $(Y_t,Z_t)\in\mathcal{L}^{\exp}_\mathcal{F}\times \bigcap_{r>1}\mathcal{H}_d^r.$
It is clear that there exists a sequence $\{Z^n_t\}_{n\geq1}\subset{\cal{O}}$ such that $Z^n_t\rightarrow Z_t$ in ${\cal{H}}^2_d$ as $n\rightarrow\infty.$ Since
\begin{equation*}
\left|\sup_{t\in[0,T]}\left|\int_0^tZ^n_s\cdot dB_s\right|-\sup_{t\in[0,T]}\left|\int_0^tZ_s\cdot dB_s\right|\right|
\leq\sup_{t\in[0,T]}\left|\int_0^t(Z^n_s-Z_s)\cdot dB_s\right|,
\end{equation*}
by BDG inequality, we have $\sup_{t\in[0,T]}\left|\int_0^tZ^n_s\cdot dB_s\right|\rightarrow\sup_{t\in[0,T]}\left|\int_0^tZ_s\cdot dB_s\right|$ in $L^2({\cal{F}}_T)$ as $n\rightarrow\infty.$ Then by \cite[Lemma 2.5]{K}, we can find a subsequence of $\{Z^n_t\}_{n\geq1}$ still denoted by $\{Z^n_t\}_{n\geq1}$ such that $\sup_{n\geq 1}|Z^n_t|\in{\cal{H}}^2_d$ and
$\sup_{n\geq 1}\sup_{t\in[0,T]}\left|\int_0^tZ^n_s\cdot dB_s\right|\in L^2({\cal{F}}_T).$

For $k>0$, we define
\begin{equation*}
\sigma_k:=\inf\left\{t\geq0:\int_0^t\sup_{n\geq1}|Z^n_s|^2ds
+\sup_{n\geq1}\sup_{r\in[0,t]}\left|\int_0^rZ^n_s\cdot dB_s\right|\geq k\right\}\wedge T.
\end{equation*}
By D\'{e}but theorem, we get that for any $k>0$, $\sigma_k$ is a stopping time. Since $g(t,z)\in{\cal{G}}^{\mu,\nu}\cap{\cal{G}}^{\mu,\nu}_{\theta}$, by Remark \ref{2.1} and dominated convergence theorem, we deduce that for all $k>0$, there exists a subsequence of $\{Z^n_t\}_{n\geq1}$ still denoted by $\{Z^n_t\}_{n\geq1}$ such that
\begin{align*}
\int^{\sigma_k}_{\sigma_k\wedge t}g(s,Z^n_s)ds-\int^{\sigma_k}_{\sigma_k\wedge t}Z^n_s\cdot dB_s\rightarrow\int^{\sigma_k}_{\sigma_k\wedge t}g(s,Z_s)ds-\int^{\sigma_k}_{\sigma_k\wedge t}Z_s\cdot dB_s,\quad dP\text{-}a.s.,
\end{align*}
as $\rightarrow\infty.$ Moreover, since $\sup_{r\in[0,t]}\left|\int_0^rZ^n_s\cdot dB_s\right|$ is continuous in $t$ and $\sup_{n\geq1}\sup_{r\in[0,t]}\left|\int_0^rZ^n_s\cdot dB_s\right|$ is increasing in $t$, we can deduce that $\sup_{n\geq1}\sup_{r\in[0,t]}\left|\int_0^rZ^n_s\cdot dB_s\right|$ is left-continuous in $t$. Thus, for all $k>0$, we have
 \begin{align*}
 &\sup_{n\geq1}\left|\int^{\sigma_k}_{\sigma_k\wedge t}g(s,Z^n_s) ds+\int^{\sigma_k}_{\sigma_k\wedge t}Z^n_s\cdot dB_s\right|\\
 &\leq2\int^{\sigma_k}_0(\mu+\nu)(1+\sup_{n\geq1}|Z^n_s|^2)ds+2\sup_{n\geq1}\sup_{r\in[0,\sigma_k]}
 \left|\int^r_0Z^n_s\cdot dB_s\right|\\
 &\leq2(\mu+\nu)(T+k)+2k.
\end{align*}
By the two equations above, (\ref{6.7}) and Proposition \ref{rmp2}(i), we get that for all $k>0$,
\begin{equation*}
\rho_{\sigma_k\wedge t}\left(\int^{\sigma_k}_{\sigma_k\wedge t}g(s,Z_s)ds-\int^{\sigma_k}_{\sigma_k\wedge t}Z_s\cdot dB_s\right)=0.
\end{equation*}
Then by (\ref{6.8}), Proposition \ref{rmp1}(i) and Definition \ref{qg}(i), we get that for all $k>0$ and $t\in[0,T]$,
\begin{equation*}
\rho_{\sigma_k\wedge t}(-Y_{\sigma_k})=Y_{\sigma_k\wedge t}=\mathcal{E}^g[-\xi|{\cal{F}}_{\sigma_k\wedge t}].
\end{equation*}
From this and Proposition \ref{rmp2}(i), we deduce that for all $\xi\in\mathcal{L}^{\exp}_T,$ (\ref{3.2}) holds. Moreover, by Lemma \ref{g}(vii)(viii), we further get that $g$ is a unique function in ${\cal{G}}^{\mu,\nu}\cap{\cal{G}}_{conv}$ such that for all $\xi\in\mathcal{L}^{\exp}_T,$ (\ref{3.2}) holds.

\textbf{Step 3.} In view of (\ref{3.2}), to prove (\ref{3.3}), we only need to show that for all $\xi\in\mathcal{L}^{\exp}_T,$
\begin{align*}\label{6.9}
\mathcal{E}^g[-\xi|{\cal{F}}_t]
=\operatorname{ess\,sup}_{q\in\mathcal{Q}_{\xi,f}}E_{Q^q}\left[-\xi-\int_t^Tf(s,q_s)ds|\mathcal{F}_t\right],\quad \forall t\in[0,T],\tag{6.9}
\end{align*}
where $\mathcal{Q}_{\xi,f}:=\{q_t\in H_d^2: \theta^q_t:=\exp\{\int_0^tq_s\cdot dB_s-\frac{1}{2}\int_0^t|q_s|^2ds\}$ is a uniformly integrable martingale such that $E_{Q^q}[|\xi|+\int_0^T|f(s,q_s)|ds]<\infty$ with $dQ^q=\theta^q_TdP\}.$
(\ref{6.9}) can be derived from the proof of \cite[Theorem 3.1(i)]{FHT24}. For convenience, we show its proof in the \hyperref[appA]{Appendix}, where we can also find that the supremum in (\ref{3.3}) is attained by any $q_t\in H^2_d$ such that $dt\times dP$-$a.e.,$ $q_t\in \partial g(t,Z_t^{-\xi,g})$. The proof is complete.
\end{proof}

\begin{proof} [\emph{\textbf{The proof of Corollary \ref{rmt2}:}}]
\textbf{Step 1.} By Theorem \ref{rmt}, there exists a unique $g\in{\cal{G}}^{\mu,\nu}\cap{\cal{G}}_{conv}$ such that for all $\xi\in\mathcal{L}^{\exp}_T$,
 \begin{equation*}\label{6.10}
 \rho_t(\xi)={\cal{E}}^g[-\xi|{\cal{F}}_t],\quad t\in[0,T].\tag{6.10}
\end{equation*}
Since $\rho$ satisfies (\hyperref[r5]{r5}) and (\hyperref[r5]{r6}) on $\mathcal{L}^{\exp}_T$, by Lemma \ref{g}(ix), we get that $g\in{\cal{G}}^{\mu,\nu}\cap{\cal{G}}_{subl}$. This implies that $dt\times dP$-$a.e.$, for all $z\in\mathbf{R}^d$ and $\beta>0$, $\beta|g(t,z)|=|g(t,\beta z)|\leq\beta\mu|z|+\beta^2\nu|z|^2$, and thus \begin{equation*}
|g(t,z)|\leq\mu|z|+\beta\nu|z|^2.
\end{equation*}
Letting $\beta\rightarrow0$, we have $g\in{\cal{G}}^{\mu,0}\cap{\cal{G}}_{subl}$.

Since $g\in{\cal{G}}^{\mu,0}$, we have, $dt\times dP$-$a.e.$,
\begin{equation*}\label{6.11}
f(t,x):=\sup_{z\in\mathbf{R}^d}\{x\cdot z-g(t,z)\}
\geq\sup_{z\in\mathbf{R}^d}\{x\cdot z-\mu|z|\}=\infty1_{\{|x|>\mu\}},\quad \forall x\in\mathbf{R}^d.\tag{6.11}
\end{equation*}
For $x\in\mathbf{R}^d$, if for all $z\in\mathbf{R}^d$, $x\cdot z\leq g(t,z)$, which is equivalent to $x\in\partial g(t,0)$ (since $g(t,0)\equiv0$), then we have $dt\times dP$-$a.e.$,
\begin{equation*}\label{6.12}
f(t,x)=\sup_{z\in\mathbf{R}^d}\{x\cdot z-g(t,z)\}=0.\tag{6.12}
\end{equation*}
and if for some $z\in\mathbf{R}^d$, $x\cdot z>g(t,z)$, then since $g\in{\cal{G}}_{subl}$, we have, $dt\times dP$-$a.e.$, for all $\beta>0$,
\begin{equation*}\label{6.13}
f(t,x)\geq \beta x\cdot z-g(t,\beta z)
=\beta(x\cdot z-g(t,z))\rightarrow\infty,\quad \text{as}\ \beta\rightarrow\infty.\tag{6.13}
\end{equation*}
By (\ref{6.11})-(\ref{6.13}), we get that $dt\times dP$-$a.e.$,
\begin{equation*}\label{6.14}
f(t,x)=\infty1_{\{x\notin \partial g(t,0)\}},\quad \forall x\in\mathbf{R}^d.\tag{6.14}
\end{equation*}
and for all $x\in\partial g(t,0)$, $|x|\leq\mu.$

By \cite[Theorem 2.6(i)]{FHT20}, for $\xi\in\mathcal{L}^{\exp}_T,$ the following BSDE:
\begin{equation*}\label{6.15}
Y_t=-\xi+\int_t^Tg(s,Z_s)ds-\int_t^TZ_s\cdot dB_s,\quad t\in[0,T].\tag{6.15}
\end{equation*}
has a unique solution $(Y_t,Z_t)\in\mathcal{L}^{\exp}_\mathcal{F}\times \bigcap_{r>1}\mathcal{H}_d^r.$
By measurable selection theorem (or \cite[Lemma 7.5]{BE}), we can find a progressively measurable process $\tilde{q}_t\in\partial g(t,Z_t)$, $dt\times dP$-$a.e.$ This implies that $dt\times dP$-$a.e.,$ $f(t,\tilde{q}_t)=\tilde{q}_t\cdot Z_t-g(t,Z_t),$ and then by (\ref{6.14}), we have $dt\times dP$-$a.e.,$ $\tilde{q}_t\in\partial g(t,0)$, $|\tilde{q}_t|\leq\mu$ and $f(t,\tilde{q}_t)=0$. Set $dQ^{\tilde{q}}/dP:=\exp(\int_0^T\tilde{q}_s\cdot dB_s-\frac{1}{2}\int_0^T|\tilde{q}_s|^2ds)$. Clearly, $Q^{\tilde{q}}\in\mathcal{Q}_g$. Since $dt\times dP$-$a.e.,$ $|\tilde{q}_t|\leq\mu$, we can deduce that for all $p\geq2$, $dQ^{\tilde{q}}/dP\in L^p(\mathcal{F}_T)$. Thus, by H\"{o}lder's inequality, we have for all $t\in[0,T]$, $E_{Q^{\tilde{q}}}[|Y_t|]<\infty$ and then by (\ref{6.15}), we have
\begin{align*}\label{6.16}
Y_t&=E_{Q^{\tilde{q}}}\left[-\xi+\int_t^Tg(s,Z_s)ds-\int_t^TZ_s\cdot dB_s|\mathcal{F}_t\right]\\
&=E_{Q^{\tilde{q}}}\left[-\xi-\int_t^TZ_s\cdot dB^{\tilde{q}}_s|\mathcal{F}_t\right]\\
&=E_{Q^{\tilde{q}}}[-\xi|\mathcal{F}_t],\quad \forall t\in[0,T],\tag{6.16}
\end{align*}
where
$B^{\tilde{q}}_t:=B_t-\int_0^t{\tilde{q}}_sds$ is a standard Brownian motion under the probability $Q^{\tilde{q}}$.

For any $q_t\in\mathcal{Q}_g,$ by (\ref{6.14}) and (\ref{6.11}), we have $dt\times dP$-$a.e.,$ $0=f(t,q_t)\geq q_t\cdot Z_t-g(t,Z_t).$ Then by a similar argument as in (\ref{6.16}), we have
\begin{align*}
Y_t\geq E_{Q^q}[-\xi|\mathcal{F}_t],\quad t\in[0,T].
\end{align*}
From this, (\ref{6.16}) and Definition \ref{qg}(i), it follows that for all $\xi\in\mathcal{L}^{\exp}_T$,
\begin{equation*}\label{6.17}
{\cal{E}}^g[-\xi|{\cal{F}}_t]=\sup_{Q\in\mathcal{Q}_g}E_{Q}[-\xi|\mathcal{F}_t],\quad t\in[0,T].\tag{6.17}
\end{equation*}
This with (\ref{6.10}) gives (\ref{3.4}).

If there exists another $\tilde{g}\in {\cal{G}}^{\mu,0}\cap{\cal{G}}_{subl}$ such that (\ref{3.4}) holds, then by the same arguments as in (\ref{6.11})-(\ref{6.17}), we deduce that for all $\xi\in\mathcal{L}^{\exp}_T$ and $t\in[0,T]$,
\begin{equation*}
{\cal{E}}^{\tilde{g}}[-\xi|{\cal{F}}_t]=\sup_{Q\in\mathcal{Q}_{\tilde{g}}}E_{Q}[-\xi|\mathcal{F}_t]
=\rho_t(\xi)=\sup_{Q\in\mathcal{Q}_g}E_{Q}[-\xi|\mathcal{F}_t]={\cal{E}}^g[-\xi|{\cal{F}}_t].
\end{equation*}
This with Lemma \ref{g}(vi) implies that $dt\times dP$-$a.e.,$ for all $z\in\mathbf{R}^d$, $g(t,z)=\tilde{g}(t,z).$

\textbf{Step 2.} Let us further assume that for all $\xi\in L^2(\mathcal{F}_T)$, $\rho$ satisfies (\ref{3.1}) with $\nu=0$. Then, by Proposition \ref{rmp1}(ii) and (\hyperref[A]{A}), for all $\xi, \eta\in L^2(\mathcal{F}_T)$ and $\theta\in(0,1)$, we have
\begin{align*}
  \rho_t(\xi)-\theta\rho_t(\eta)&\leq(1-\theta)\mathcal{E}^{\mu,0}\left[\frac{-\xi+\theta\eta}{1-\theta}|\mathcal{F}_t\right]\\
  &=\mathcal{E}^{\mu,0}\left[-\xi+\theta\eta|\mathcal{F}_t\right],\quad \forall t\in[0,T].
\end{align*}
Letting $\theta\rightarrow1$, by \cite[Theorem 3.2]{Peng04} and Lemma \ref{g}(vi), we have
\begin{equation*}
\rho_t(\xi)-\rho_t(\eta)\leq\mathcal{E}^{\mu,0}[\eta-\xi]\leq\mathcal{E}^{\mu,0}[|\xi-\eta|],\quad t\in[0,T].
\end{equation*}
By interchanging $\xi$ and $\eta$, we have
\begin{align*}\label{6.18}
|\rho_t(\xi)-\rho_t(\eta)|\leq\mathcal{E}^{\mu,0}[|\xi-\eta|],\quad \forall t\in[0,T].\tag{6.18}
\end{align*}
For $\xi\in L^2(\mathcal{F}_T)$, we set $\xi_n:=(\xi\vee(-n))\wedge n$. Then, by Step 1, we have $\rho_t(\xi_n)=\mathcal{E}^g[-\xi_n|\mathcal{F}_t]$. This with (\ref{6.18}) and \cite[Theorem 3.2]{Peng04} implies that for all $\xi\in L^2(\mathcal{F}_T)$, (\ref{3.2}) holds.

From the same arguments as in Step 1, we get that for all $\xi\in L^2(\mathcal{F}_T)$, (\ref{3.4}) holds. The proof is complete.
\end{proof}
\begin{appendix}
\section*{Appendix: The proof of (\ref{6.9})}\label{appA}
\begin{proof}
By the definition of $f$, we have, $dt\times dP$-$a.e.,$
\begin{equation*}\label{a.1}
f(t,x)\geq\sup_{z\in\mathbf{R}^d}\{x\cdot z-\mu|z|-\nu|z|^2\}\geq\frac{(|x|-\mu)^2}{4\nu}1_{\{|x|>\mu\}}
\geq\frac{1}{8\nu}(|x|^2-2\mu^2),\quad \forall x\in\mathbf{R}^d.\tag{a.1}
\end{equation*}
By \cite[Theorem 2.6(i)]{FHT20}, for $\xi\in\mathcal{L}^{\exp}_T,$ the following BSDE:
\begin{equation*}\label{a.2}
Y_t=-\xi+\int_t^Tg(s,Z_s)ds-\int_t^TZ_s\cdot dB_s,\quad t\in[0,T].\tag{a.2}
\end{equation*}
has a unique solution $(Y_t,Z_t)\in\mathcal{L}^{\exp}_\mathcal{F}\times \bigcap_{r>1}\mathcal{H}_d^r.$
By measurable selection theorem, we can find a progressively measurable process $\tilde{q}_t\in\partial g(t,Z_t)$, $dt\times dP$-$a.e.$ (see also \cite[Lemma 7.5]{BE}). This implies that $dt\times dP\text{-}a.e.,$
\begin{equation*}\label{a.3}
f(t,\tilde{q}_t)=\tilde{q}_t\cdot Z_t-g(t,Z_t).\tag{a.3}
\end{equation*}
By (\ref{a.1}) and (\ref{a.3}), we get that $dt\times dP\text{-}a.e.,$
\begin{align*}\label{a.4}
  \frac{1}{8\nu}(|\tilde{q}_t|^2-2\mu^2)\leq f(t,\tilde{q}_t)
  &\leq |\tilde{q}_t\cdot Z_t|+|g(t,Z_t)|\\
  &\leq16\nu|Z_t|^2+\frac{|\tilde{q}_t|^2}{16\nu}+(\mu+\nu)(1+|Z_t|^2),\tag{a.4}
\end{align*}
where implies that $\tilde{q}_t\in H_d^2$.

For any $q_t\in H_d^2$, we set
\begin{equation*}
\theta_t^q:=\exp\left(\int_0^tq_s\cdot dB_s-\frac{1}{2}\int_0^t|q_s|^2ds\right),\quad \forall t\in[0,T].
\end{equation*}
We define the following stopping time:
\begin{equation*}
\delta_n:=\inf\left\{s\geq0:\int_0^s|\tilde{q}_r|^2dr\geq n\right\}\wedge T.
\end{equation*}
and set $\tilde{q}_t^n:=\tilde{q}_t1_{\{t\in[0,\delta_n]\}}$. By \cite[Proposition 4.3 and Inequality (4.6)]{FHT24}, we have
\begin{equation*}\label{a.5}
E[\theta^{\tilde{q}^n}_T\ln(1+\theta^{\tilde{q}^n}_T)]\leq \frac{1}{2}E\left[\int_0^T\theta^{\tilde{q}^n}_s|\tilde{q}_s^n|^2ds\right]+\ln2,\tag{a.5}
\end{equation*}
and
\begin{equation*}\label{a.6}
|x||y|\leq\beta\exp\left(\frac{|x|}{\beta}\right)+\beta|y|\ln(1+|y|),\quad \forall x,y\in\mathbf{R}^d, \ \forall \beta>0.\tag{a.6}
\end{equation*}
By (\ref{a.5}), (\ref{a.1})-(\ref{a.3}) and (\ref{a.6}), we have
\begin{align*}\label{a.7}
\frac{1}{4\nu}(E[\theta^{\tilde{q}^n}_T\ln(1+\theta^{\tilde{q}^n}_T)]-\ln2)
-\frac{\mu^2T}{4\nu}
&\leq\frac{1}{8\nu}E\left[\int_0^T 1_{\{s\in[0,\delta_n]\}}\theta^{\tilde{q}^n}_s(|\tilde{q}_s^n|^2-2\mu^2)ds\right]\\
&\leq E\left[\int_0^T1_{\{s\in[0,\delta_n]\}}\theta^{\tilde{q}^n}_sf(s,\tilde{q}_s^n)ds\right]\\
&\leq E\left[\int_0^T1_{\{s\in[0,\delta_n]\}}E[\theta^{\tilde{q}^n}_T|\mathcal{F}_s]({\tilde{q}_s^n}\cdot Z_s-g(s,Z_s))ds\right]\\
&=E\left[\theta^{\tilde{q}^n}_T\int_0^T1_{\{s\in[0,\delta_n]\}}({\tilde{q}_s^n}\cdot Z_s-g(s,Z_s))ds\right]\\
&=E\left[\theta^{\tilde{q}^n}_T\left(Y_{\delta_n}-Y_0-\int_0^{\delta_n}Z_s\cdot dB^{\tilde{q}^n}_s\right)\right]\\
&\leq \frac{1}{8\nu}E\left[\sup_{t\in[0,T]}\exp(16\nu|Y_t|)\right]
+\frac{1}{8\nu}E[\theta^{\tilde{q}^n}_T\ln(1+\theta^{\tilde{q}^n}_T)],\tag{a.7}
\end{align*}
where
$B^{\tilde{q}^n}_t:=B_t-\int_0^t{\tilde{q}^n}_sds$ is a standard Brownian motion under the probability $Q^{\tilde{q}^n}$ satisfying $Q^{\tilde{q}^n}/dP=\theta^{\tilde{q}^n}_T$.
It follows from (\ref{a.7}) that there exists a constant $C$ independent of $n$ such that  $E[\theta^{\tilde{q}^n}_T\ln(1+\theta^{\tilde{q}^n}_T)]\leq C.$ Then by Protter \cite[Theorem 11]{Pro}, we get that $\{\theta^{\tilde{q}^n}_T\}_{n\geq1}$ is uniformly integrable, and then by Vitali convergence theorem, we have $E[\theta^{\tilde{q}}_T]=1$, which implies that $\theta^{\tilde{q}}_t$ is a uniformly integrable martingale. Moreover, by Fatou's lemma, we also have
\begin{equation*}\label{a.8}
  E[\theta^{\tilde{q}}_T\ln(1+\theta^{\tilde{q}}_T)]\leq C.\tag{a.8}
\end{equation*}

Set
\begin{equation*}
\tau_n^t:=\inf\left\{s\geq t:\int_t^s|Z_r|^2dr\geq n\right\}\wedge T.
\end{equation*}
By (\ref{a.2}) and (\ref{a.3}), we have
\begin{align*}\label{a.9}
Y_t&=E_{Q^{\tilde{q}}}\left[Y_{\tau_n^t}+\int_t^{\tau_n^t}g(s,Z_s)
-\int_t^{\tau_n^t}Z_s\cdot dB_s|\mathcal{F}_t\right]\\
&=E_{Q^{\tilde{q}}}\left[Y_{\tau_n^t}-\int_t^{\tau_n^t}f(s,\tilde{q}_s)
-\int_t^{\tau_n^t}Z_s\cdot dB^{\tilde{q}}_s|\mathcal{F}_t\right]\\
&=E_{Q^{\tilde{q}}}\left[Y_{\tau_n^t}-\int_t^{\tau_n^t}f(s,\tilde{q}_s)ds|\mathcal{F}_t\right],\quad \forall t\in[0,T],\tag{a.9}
\end{align*}
where $B^{\tilde{q}}_t:=B_t-\int_0^t{\tilde{q}}_sds$ is standard Brownian motion under the probability $Q^{\tilde{q}}$. By (\ref{a.6}) and (\ref{a.8}), we have
\begin{equation*}\label{a.10}
E_{Q^{\tilde{q}}}\left[\sup_{t\in[0,T]}|Y_t|\right]\leq
E\left[\sup_{t\in[0,T]}\exp(|Y_t|)\right]
+E[\theta^{\tilde{q}}_T\ln(1+\theta^{\tilde{q}}_T)]<\infty.\tag{a.10}
\end{equation*}
By (\ref{a.9}), (\ref{a.10}), Fatou's lemma and the fact that $f\geq0$, we have
\begin{equation*}\label{a.11}
E_{Q^{\tilde{q}}}\left[\int_t^T|f(s,\tilde{q}_s)|ds|\mathcal{F}_t\right]<\infty,\quad \forall t\in[0,T].\tag{a.11}
\end{equation*}
From (\ref{a.10}) and (\ref{a.11}), it follows that $\tilde{q}_t\in\mathcal{Q}_{(\xi,f)}$. Then, by (\ref{a.9})-(\ref{a.11}) and domination convergence theorem, we have
\begin{align*}\label{a.12}
Y_t=E_{Q^{\tilde{q}}}\left[-\xi-\int_t^Tf(s,\tilde{q}_s)ds|\mathcal{F}_t\right],\quad \forall t\in[0,T].\tag{a.12}
\end{align*}

We next show that for any $q_t\in\mathcal{Q}_{(\xi,f)},$
\begin{align*}\label{a.13}
Y_t\geq E_{Q^{q}}\left[-\xi-\int_t^Tf(s,q_s)ds|\mathcal{F}_t\right],\quad \forall t\in[0,T].\tag{a.13}
\end{align*}
By (\ref{a.5}), (\ref{a.1}), (\ref{a.3}) and the definition of $\mathcal{Q}_{(\xi,f)},$ we have
\begin{align*}\label{a.14}
\frac{1}{4\nu}(E[\theta^{q}_T\ln(1+\theta^{q}_T)]-\ln2)
-\frac{\mu^2T}{4\nu}
&\leq\frac{1}{8\nu}E\left[\int_0^T\theta^{q}_s(|q_s|^2-2\mu^2)ds\right]\\
&=E\left[\int_0^TE[\theta^{q}_T|\mathcal{F}_s]f(s,q_s)ds\right]\\
&=E_{Q^q}\left[\int_0^Tf(s,q_s)ds\right]<\infty,\tag{a.14}
\end{align*}
Since $f(t,q_t)\geq q_t\cdot Z_t-g(t,Z_t),$ by (\ref{a.14}) and similar arguments as in (\ref{a.9})-(\ref{a.10}), we can obtain (\ref{a.13}), which together with (\ref{a.12}) and Definition \ref{qg}(i) gives (\ref{6.9}).
\end{proof}
\end{appendix}
\
\\
\textbf{Declarations}
\\  \\
\textbf{Competing interests}\quad The authors declare no competing interests

\end{document}